\def\dr{{\partial_r}}
\def\SO{\mathrm{SO}}
\def\Sp{\mathrm{Sp}}
\def\SU{\mathrm{SU}}
\def\Sp{\mathrm{Sp}}
\def\Spin{\mathrm{Spin}}
\def\G{\mathrm{G}}
\def\U{\mathrm{U}}
\def\T{\mathrm{T}}
\def\Hol{{\rm Hol}}
\def\f{\varphi}
\def \RM{\mathbb{R}}
\def \ZM{\mathbb{Z}}
\def \CM{\mathbb{C}}
\def \HM{\mathbb{H}}
\def \SM{\mathbb{S}}
\def\G{\mathrm{G}}
\def\1{\mathbf{1}}
\def\#{\sharp}
\def\C{\mathbb{C}}
\def\dd{\mathrm d}
\def\e{\varepsilon}
\def\cL{\mathcal{L}}
\def\cV{\mathcal{V}}
\def\Ric{\mathrm{Ric}}
\def\tr{\mathrm{tr}}
\def\<#1,#2>{\langle\,#1,\,#2\,\rangle}
\def\beq{\begin{equation}}
\def\eeq{\end{equation}}
\def\norm(#1){\|#1\|^2}
\def\rectangle(#1,#2)[#3,#4]#5{
 \multiput(#1,#2)(#3,0)2{\line(0,1){#4}}\multiput(#1,#2)(0,#4)2{\line(1,0){#3}}
 \put(#1,#2){\vbox to #4pt{\hbox to #3pt{\hfill}\vfill}}}
\def\recttext(#1,#2)[#3,#4]#5{\put(#1,#2)
 {\vbox to #4pt{\vfill\hbox to #3pt{\hss#5\hss}\vfill}}}
\newtheorem{Lemma}{Lemma}[section]
\newtheorem{Proposition}[Lemma]{Proposition}
\newtheorem{Theorem}[Lemma]{Theorem}
\newtheorem{Corollary}[Lemma]{Corollary}
\theoremstyle{definition}
\newtheorem{Remark}[Lemma]{Remark}
\newtheorem{Example}[Lemma]{Example}
\title{Conformally related K\"ahler metrics and the holonomy of lcK manifolds}
\author{Farid Madani, Andrei Moroianu, Mihaela Pilca}
\thanks{This work was partially supported by the Procope Project No. 32977YJ. 
The third named author also acknowledges partial support 
from the ARS-Program at the University of Regensburg.}
\address{Farid Madani\\Institut f\"ur Mathematik\\
Goethe Universit\"at Frankfurt\\Robert-Mayer-Str. 10, 60325 Frankfurt am Main, Germany}
\email{madani@math.uni-frankfurt.de}
\address{Andrei Moroianu \\ Laboratoire de Math\'ematiques de Versailles, UVSQ, CNRS, Universit\'e Paris-Saclay, 78035 Versailles, France }
\email{andrei.moroianu@math.cnrs.fr}
\address{Mihaela Pilca\\Fakult\"at f\"ur Mathematik\\
Universit\"at Regensburg\\Universit\"atsstr. 31 
D-93040 Regensburg, Germany
\emph{and} 
Institute of Mathematics ``Simion Stoilow" of the Romanian Academy, 
21, Calea Grivitei Str.
010702-Bucharest, Romania}
\email{mihaela.pilca@mathematik.uni-regensburg.de}
\subjclass[2010]{53A30, 53B35, 53C25, 53C29, 53C55}
\keywords{lcK manifold, holonomy, irreducibility, K\"ahler structure, Einstein lcK metric, conformally K\"ahler}
\begin{document}

\begin{abstract} A locally conformally K\"ahler (lcK) manifold is a complex manifold $(M,J)$ together with a Hermitian metric $g$ which is conformal to a K\"ahler metric in the neighbourhood of each point.
In this paper we obtain three classification results in locally conformally K\"ahler geometry.
The first one is the classification of conformal classes on compact manifolds containing two non-homothetic 
K\"ahler metrics. The second one is the classification of compact Einstein locally conformally K\"ahler manifolds. The third result is 
the classification of the possible (restricted) Riemannian holonomy groups of compact locally conformally K\"ahler manifolds. We show that every locally (but not globally) conformally K\"ahler compact manifold of dimension $2n$ has holonomy $\SO(2n)$, unless it is Vaisman, in which case it has restricted holonomy $\SO(2n-1)$. We also show that the restricted holonomy of a proper globally conformally K\"ahler compact manifold of dimension $2n$ is either $\SO(2n)$, or $\SO(2n-1)$, or $\U(n)$, and we give the complete description of the possible solutions in the last two cases.
\end{abstract}
\maketitle

\section{Introduction}
It is well-known that on a compact complex manifold, any conformal class admits at most one K\"ahler metric compatible with the complex structure, 
up to a positive constant. The situation might change if the complex structure is not fixed.
One may thus naturally ask the following question: are there any compact manifolds which admit 
two non-homothetic metrics in the same conformal class, which are both K\"ahler 
(then necessarily with respect to non-conjugate complex structures)? One of the aims of the present paper 
is to answer this question by describing all such manifolds. This problem can be interpreted in terms of 
conformally K\"ahler metrics in real dimension $2n$ with Riemannian holonomy contained in the unitary group $\U(n)$. More generally, 
we want to classify {\em locally} conformally K\"ahler metrics on compact manifolds which are Einstein or have non-generic holonomy.

Recall that a Hermitian manifold $(M,g,J)$ of complex dimension $n\geq 2$ is called locally conformally K\"ahler (lcK) if around every point in $M$ the metric $g$ can be conformally rescaled to a K\"ahler metric. If $\Omega:=g(J\cdot,\cdot)$ denotes the fundamental 2-form, the above condition is equivalent to the existence of a closed 1-form $\theta$, called the Lee form (which is up to a constant equal to the logarithmic differential of the local conformal factors), such that 
\begin{equation*}\label{eqomega}\dd\Omega=2\theta\wedge \Omega.\end{equation*}

If the Lee form $\theta$ vanishes, the structure $(g,J)$ is simply K\"ahler. 
If the Lee form does not vanish identically, the lcK structure is called {\em proper}.
When $\theta$ is exact, there exists a K\"ahler metric in the conformal class of $g$, 
and the manifold is called {\em globally conformally K\"ahler} (gcK). 
If $\theta$ is not exact, then $(M,g,J)$ it is called {\em strictly lcK}.
A particular class of proper lcK manifolds is the one consisting 
of manifolds whose Lee form is parallel with respect to the Levi-Civita 
connexion of the metric, called Vaisman manifolds. A Vaisman manifold is always strictly lcK since the Lee form, being harmonic, cannot be exact.  
 
In this paper we study three apparently independent -- but actually interrelated -- classification problems:

{\bf P1.} The classification of compact proper lcK manifolds $(M^{2n},g,J,\theta)$ with $g$ Einstein.

{\bf P2.}  The classification of compact conformal manifolds $(M^{2n},c)$ whose conformal class $c$ contains two non-homothetic K\"ahler metrics.

{\bf P3.}  The classification of compact proper lcK manifolds $(M^{2n},g,J,\theta)$ with reduced ({\em i.e.} non-generic) holonomy: $\Hol(M,g)\subsetneq\SO(2n)$.

It turns out that P1 and P2 are important steps (but also interesting for their own sake) towards the solution of P3. 

We are able to solve each of these problems completely. Their solutions are provided by Theorem \ref{p1}, Theorem \ref{p2} and Theorem \ref{p3} below. We now explain briefly these results and describe the methods used to prove them.

In complex dimension $2$, C.~LeBrun~\cite{LeBrun1997} showed that if a 
compact complex surface admits an Einstein metric compatible with the complex structure, then the metric is gcK and the complex surface is obtained from $\CM P^2$ by blowing up one, two or three points in general position. When the complex dimension is greater than $2$,
A. Derdzinski and G. Maschler, \cite{DM2003},  have obtained the local classification of 
conformally-Einstein K\"ahler metrics, and showed that in the compact case the 
only K\"ahler metrics which are conformal (but not homothetic) to an Einstein metric are those constructed by L. B\'erard-Bergery in \cite{Bergery1982}. 
By changing the point of view, this can be interpreted as the classification of compact (proper) globally conformally K\"ahler manifolds $(M^{2n},g,J,\theta)$ with $g$ Einstein. In order to solve P1, it remains to understand the strictly lcK case. 

Since every strictly lcK manifold has infinite fundamental group, Myers' theorem shows that the scalar curvature of any compact Einstein strictly lcK manifold is non-positive. In Theorem~\ref{thm einstein} below we show, using Weitzenb\"ock-type arguments, that the Lee form of every compact Einstein lcK manifold with non-positive scalar curvature vanishes. This gives the solution to Problem P1:

\begin{Theorem}\label{p1} If $(g,J,\theta)$ is an Einstein proper lcK structure on a compact manifold $M^{2n}$, then the Lee form is exact ($\theta=\dd\f$), and the scalar curvature of $g$ is positive. For $n=2$ the complex surface $(M,J)$ is obtained from $\CM P^2$ by blowing up one, two or three points in general position. For $n\ge3$, the K\"ahler manifold $(M,e^{-2\f}g,J)$ is one of the examples of conformally-Einstein K\"ahler manifolds constructed by B\'erard-Bergery in \cite{Bergery1982}.
\end{Theorem}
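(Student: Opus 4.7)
The plan is to combine three ingredients: (i) topological/geometric obstructions in the strictly lcK case via Myers' theorem, (ii) a Weitzenböck-type vanishing result (this is exactly what Theorem~\ref{thm einstein}, announced in the paragraph above the statement, is for) which rules out non-vanishing Lee forms under non-positive scalar curvature, and (iii) the existing classification results of LeBrun and Derdzinski--Maschler/Bérard-Bergery in the gcK setting. My intended proof proceeds in three stages.

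First, I would rule out the strictly lcK case. If $(M^{2n},g,J,\theta)$ is strictly lcK, then the Lee form $\theta$ is closed but not exact, so $H^1(M;\RM)\neq 0$. In particular $H_1(M;\ZM)$ is infinite, hence $\pi_1(M)$ is infinite. Since $g$ is Einstein, Ricci is a constant multiple of $g$, and Myers' theorem implies that the Einstein constant cannot be positive, i.e.\ the scalar curvature of $g$ is non-positive. At this point one invokes Theorem~\ref{thm einstein} (compact Einstein lcK with scalar curvature $\le 0$ forces $\theta=0$), which contradicts the assumption that the structure is proper. Hence a compact proper Einstein lcK manifold is necessarily gcK.

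Second, having established $\theta=\dd\f$, the metric $\ti g:=e^{-2\f}g$ is Kähler on $(M,J)$ and is conformal (but not homothetic, since $\f$ is non-constant) to the Einstein metric $g$. Moreover, applying the contrapositive of Theorem~\ref{thm einstein} once more (now with $\theta\neq 0$ guaranteed by properness), we conclude that the scalar curvature of $g$ is strictly positive.

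Third, one invokes the classification of compact Kähler manifolds which are conformal (but not homothetic) to an Einstein metric. For $n=2$ this is LeBrun's theorem~\cite{LeBrun1997}, which identifies $(M,J)$ with $\CM P^2$ blown up at one, two or three points in general position. For $n\ge 3$ this is Derdzinski--Maschler~\cite{DM2003}, which together with the compactness assumption forces $(M,\ti g,J)$ to be one of the examples of Bérard-Bergery~\cite{Bergery1982}. This yields the two alternatives in the statement.

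The main obstacle is the Weitzenböck step, i.e.\ Theorem~\ref{thm einstein}: everything else is either topological (Myers) or a direct appeal to existing classifications. The delicate point is to combine the Bochner identity for the closed $1$-form $\theta$ (where the Einstein condition contributes a curvature term proportional to $s|\theta|^2$) with the lcK structure equations (the identity $\dd\Omega=2\theta\wedge\Omega$ together with its consequences for $\nabla\theta$, $J\theta$ and $\delta\Omega$) in such a way that integration over the compact manifold produces a sum of non-negative terms that can vanish only when $\theta\equiv 0$. Once this Weitzenböck-type integral identity is set up, the rest of the argument is essentially bookkeeping.
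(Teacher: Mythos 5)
Your proposal is correct and follows essentially the same route as the paper: infinite fundamental group of strictly lcK manifolds plus Myers to force non-positive scalar curvature, Theorem~\ref{thm einstein} to kill the Lee form in that case (hence gcK with positive scalar curvature by properness), and then the LeBrun and Derdzinski--Maschler/B\'erard-Bergery classifications. The only minor discrepancy is in your closing sketch of Theorem~\ref{thm einstein} itself: the paper's argument does not integrate a sum of non-negative terms, but instead applies the Bochner formula to both $\theta$ and $J\theta$ to derive $\dd f=(2\lambda-3f+(4-2n)|\theta|^2)\theta$ for $f:=\delta\theta+|\theta|^2$ and then runs a maximum-principle argument at a maximum point of $f$.
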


The solution of Problem P2 relies on Theorem~\ref{thm kaehler} below, where we show that if $(M^{2n},g,J,\theta)$ is a compact proper lcK manifold whose metric $g$ is K\"ahler with respect to some complex structure $I$, then $I$ commutes with $J$ and the Lee form is exact: $\theta=\dd\f$. In particular $(g,I)$ and $(e^{-2\f}g,J)$ are K\"ahler structures on $M$, {\em i.e.} the conformal structure $[g]$ is ambik\"ahler, according to the terminology introduced in dimension 4 by V.~Apostolov, D.~Calderbank and P.~Gauduchon in \cite{ACG2013}. 

Examples of ambik\"ahler structures in every complex dimension $n\ge 2$ can be obtained on the total spaces of some $S^2$-bundles over compact Hodge manifolds, by an Ansatz which is reminiscent of Calabi's construction \cite{Calabi1982}. This construction is described in Proposition \ref{prop} below. 

Conversely, we have the following result, which answers Problem P2:

\begin{Theorem} \label{p2} Assume that a conformal class on a compact manifold $M$ of real dimension $2n\ge4$ contains two non-homothetic K\"ahler metrics $g_+$ and $g_-$, that is, there exist complex structures $J_+$ and $J_-$ and a non-constant function $\f$ such that $(g_+,J_+)$ and $(g_-:=e^{-2\f}g_+,J_-)$ are K\"ahler structures. Then $J_+$ and $J_-$ commute, so that $M$ is ambik\"ahler. Moreover, for $n\ge 3$,
there exists a compact K\"ahler manifold $(N,h,J_N)$, a positive real number $b$, and a function $\ell:(0,b)\to\RM^{>0}$ such that $(M,g_+,J_+)$ and $(M,g_-,J_-)$ are obtained from the construction described in Proposition \ref{prop}.
\end{Theorem}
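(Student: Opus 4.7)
The plan is to reduce the first assertion to Theorem \ref{thm kaehler}, and then to extract from the pair of commuting Kähler structures the $S^2$-bundle presentation of Proposition \ref{prop}.

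First I would exhibit a proper globally conformally Kähler structure on $M$. Since $J_-$ is $g_-$-orthogonal and $g_+=e^{2\f}g_-$, the endomorphism $J_-$ is also $g_+$-orthogonal. Writing $\Omega_-$ for the Kähler form of $(g_-,J_-)$ and $\Omega_-^+(X,Y):=g_+(J_-X,Y)$, one has $\Omega_-^+=e^{2\f}\Omega_-$, so that
\beq
\dd\Omega_-^+ = 2\,\dd\f\wedge\Omega_-^+.
\eeq
Thus $(g_+,J_-,\dd\f)$ is an lcK structure on $M$, proper because $\f$ is non-constant, and globally conformally Kähler. Since $g_+$ is by hypothesis also Kähler with respect to $J_+$, Theorem \ref{thm kaehler} applies with $(g,J,\theta):=(g_+,J_-,\dd\f)$ and auxiliary complex structure $I:=J_+$, giving immediately $J_+J_-=J_-J_+$. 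This proves the ambikähler conclusion, which is valid for all $n\ge 2$.

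For $n\ge 3$, I would next exploit this commutativity to split $TM$. Set $K:=J_+J_-=J_-J_+$; then $K^2=\id$ and $K$ is $g_+$-symmetric, yielding an orthogonal decomposition $TM=V_+\oplus V_-$ into $\pm1$-eigenspaces, preserved by both $J_+$ and $J_-$. Using the closedness of $\Omega_+$ and of $\Omega_-=e^{-2\f}\Omega_-^+$ together with the gcK identity above, I would show that $\nabla^{g_+}\f$ and $J_+\nabla^{g_+}\f$ lie in a single eigendistribution of real rank $2$, while the orthogonal distribution is integrable with leaves that are simultaneously Kähler submanifolds for $(g_+,J_+)$ and $(g_-,J_-)$. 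A crucial intermediate point is that $J_+\nabla^{g_+}\f$ is Killing (equivalently, that $\f$ is a Killing potential for $(g_+,J_+)$): this follows by combining the Kähler condition for $g_+$ with the fact that $\Omega_-^+=e^{2\f}\Omega_-$ is closed on $M$ with $J_+J_-=J_-J_+$, leading via a Weitzenböck- or Bochner-type identity to $\mathcal{L}_{J_+\nabla^{g_+}\f}g_+=0$.

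Once this is in place, the final step is to identify the geometry with the Ansatz from Proposition \ref{prop}. The Killing vector field $J_+\nabla^{g_+}\f$ generates an isometric $S^1$-action preserving both Kähler structures; its zero set coincides with the critical set of $\f$, which must consist of exactly two connected compact Kähler submanifolds $N_0$ and $N_\infty$. The function $\f$ is then constant on each orbit and strictly monotonic along the gradient flow, so that $M$ is realized as an $S^2$-bundle over a compact Kähler manifold $N$ obtained as the Kähler quotient of a regular level, with fibers parametrized by $\f\in(0,b)$. The transverse Kähler class on $N$ is integral (Hodge) because the generic orbit is a closed geodesic of length determined by $\ell(\f)$, and the whole metric reconstructs from a single positive profile $\ell:(0,b)\to\RM^{>0}$, matching Proposition \ref{prop} verbatim.

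The main obstacle I expect is this last identification step: verifying that the rank-$2$ eigendistribution is everywhere exactly two-dimensional (so that the critical set has the right codimension), that the $S^1$-action is free outside $N_0\cup N_\infty$, and that the base $N$ is a smooth compact Hodge manifold. This is also the place where the hypothesis $n\ge 3$ enters, ruling out the more flexible four-dimensional ambikähler examples of Apostolov–Calderbank–Gauduchon, where both eigendistributions can have rank $2$ and the base need not be of positive dimension.
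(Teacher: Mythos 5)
Your reduction of the commutation statement to Theorem \ref{thm kaehler} via the lcK structure $(g_+,J_-,\dd\f)$ is exactly the paper's argument, and for $n\ge3$ the geometric picture you sketch (a rank-$2$ eigendistribution of $J_+J_-$ containing $\dd\f$ and $J_+\dd\f$, an isometric $S^1$-action whose fixed-point set is the two-component critical set of $\f$, and the resulting $S^2$-bundle over a Hodge manifold) is the same as the one the paper establishes in Proposition \ref{thmconv}. Note that two of the facts you list among the ``main obstacles'' --- that the $(-1)$-eigendistribution of $J_+J_-$ has rank exactly $2$ and that it contains $\dd\f$ and $J_+\dd\f$ --- are already byproducts of the proof of Theorem \ref{thm kaehler} for $n\ge3$: equality in \eqref{i1}--\eqref{i3} forces $\tr(J_+J_-)=\pm(2n-4)$ and $J_+\theta=\pm J_-\theta$, so you should quote them rather than re-derive them; this is also precisely where $n\ge3$ enters.

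The genuine gap is the Killing field. The claim that $\f$ is a Killing potential for $(g_+,J_+)$, i.e.\ that $\cL_{J_+\nabla^{g_+}\f}\,g_+=0$, is false, and no Weitzenb\"ock identity will produce it. What the curvature identity $R_{X,Y}=R_{J_+X,J_+Y}$ actually yields (the paper's Lemma \ref{lem nablaTheta}, a substantial computation that is the technical heart of the proof) is, with $\theta=\dd\f$,
\[
\nabla^{g_+}\dd\f=\tfrac12|\theta|^2\,g_+-\tfrac12\Bigl(\tfrac{\delta\theta}{|\theta|^2}+n+1\Bigr)\theta\otimes\theta-\tfrac12\Bigl(\tfrac{\delta\theta}{|\theta|^2}+n-1\Bigr)J_+\theta\otimes J_+\theta ,
\]
whose $\theta\otimes\theta$ and $J_+\theta\otimes J_+\theta$ coefficients differ by $1$; hence $\mathrm{Hess}^{g_+}\f$ is not $J_+$-invariant and $J_+\nabla^{g_+}\f$ is not Killing. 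The correct Killing potential is $e^{\f}$: since $\mathrm{Hess}^{g_+}(e^{\f})=e^{\f}(\mathrm{Hess}^{g_+}\f+\theta\otimes\theta)$, adding $\theta\otimes\theta$ equalizes the two coefficients, so $K:=J_+\nabla^{g_+}(e^{\f})=e^{\f}J_+\nabla^{g_+}\f$ is Killing for $g_+$ (and for the average metric $g_0=e^{-\f}g_+$, which is the metric Proposition \ref{thmconv} works with). Your vector field is a non-constant positive multiple of $K$, so it has the same orbits and zero set but does not generate isometries, and every subsequent step --- closedness of the orbits via the rotation angle at a fixed point, the Riemannian submersion onto $N$, the reconstruction of the profile $\ell$ --- needs the genuine Killing field. (A further small slip: the generic $S^1$-orbits are closed but are not geodesics; the geodesics of the construction are the radial curves tangent to $J_+\xi/|\xi|$.) The outline is therefore salvageable, but you must replace $\f$ by $e^{\f}$ as the Killing potential and actually prove the displayed Hessian formula; that is where essentially all of the remaining work in the paper's proof is concentrated.
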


The proof, explained in detail in Sections \ref{s3} and \ref{skahler}, goes roughly as follows: the main difficulty is to show that the complex structures $J_+$ and $J_-$ necessarily commute. This is done in Theorem \ref{thm kaehler} using in an essential way the compactness assumption. When the complex dimension is at least 3, Theorem \ref{thm kaehler} also shows that $\dd\f$ is preserved (up to sign) by $J_+J_-$.
As a consequence, one can check that $J_++J_-$ defines a Hamiltonian 2-form of rank 1 with respect to both K\"ahler metrics $g_+$ and $g_-$. One can then either use the classification of compact manifolds with Hamiltonian forms obtained in \cite{ACG2006} (which however is rather involved) or obtain the result in a simpler way by a geometric argument given in Proposition \ref{thmconv}.

We now discuss the holonomy problem for compact proper lcK manifolds, that is, Problem P3, whose original motivation stems from~\cite{Moroianu2015}. 

By the Berger-Simons holonomy theorem, an lcK manifold $(M^{2n},g,J)$ either has reducible restricted holonomy representation, or is locally symmetric irreducible, or its restricted holonomy group $\Hol_0(M,g)$ is one of the following: $\SO(2n)$, $\U(n)$, $\SU(n)$, $\Sp(\frac{n}{2})$, $\Sp(\frac{n}{2})\Sp(1)$, 
$\Spin(7)$. 

The restricted holonomy representation of a compact Riemannian manifold $(M,g)$ is reducible if and only if the tangent bundle of a finite covering of $M$ carries an oriented parallel (proper) distribution. We first show in Theorem~\ref{thmred} that a compact proper lcK manifold $(M^{2n},g,J)$ cannot carry a parallel distribution whose rank $d$ satisfies $2\le d\le 2n-2$. The special case when this distribution is 1-dimensional was studied recently in \cite{Moroianu2015}, where the second named author described all compact proper lcK manifolds $(M^{2n},g,J)$ with $n\ge3$ which carry a non-trivial parallel vector field. In Theorem \ref{main am} below we give an alternate proof of this classification, which is not only simpler, but also covers the missing case $n=2$. This settles the reducible case.

The remaining possible cases given by the Berger-Simons theorem are either Einstein or K\"ahler (and gcK by  Theorem~\ref{thm kaehler}), and thus fall into the previous classification results. Summarizing, we have the following classification result for the possible (restricted) holonomy groups of compact proper lcK manifolds:

\begin{Theorem}\label{p3}
Let $(M^{2n},g,J,\theta)$, $n\ge 2$, be a compact proper lcK manifold with non-generic holonomy group $\Hol(M,g)\subsetneq\SO(2n)$. Then the following exclusive possibilities occur:
\begin{enumerate}[label=\arabic*.]
\item \label{item 1} $(M,g,J,\theta)$ is strictly lcK, $\Hol(M,g)\simeq\SO(2n-1)$ and $(M,g,J,\theta)$ is Vaisman (that is, $\theta$ is parallel).
\item \label{item 2} $(M,g,J,\theta)$ is gcK (that is, $\theta$ is exact) and either:
\begin{enumerate}[label=\alph*),leftmargin=0.5cm]
\item $n\ge 3$, $\Hol_0(M,g)\simeq\U(n)$, and a finite covering of $(M,g,J,\theta)$ is obtained by the Calabi Ansatz described in Proposition \ref{prop}, or
\item $n=2$, $\Hol_0(M,g)\simeq\U(2)$ and $M$ is ambik\"ahler in the sense of \cite{ACG2013}, or
\item $\Hol_0(M,g)\simeq\SO(2n-1)$ and a finite covering of $(M,g,J,\theta)$ is obtained by the construction described in Theorem \ref{main am}.
\end{enumerate}
\end{enumerate}
\end{Theorem}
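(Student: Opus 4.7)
My plan for Theorem \ref{p3} is to enumerate the possibilities for $\Hol_0(M,g)$ using the Berger--Simons theorem and then dispatch each case to one of the earlier classification results of the paper. By Berger--Simons, either the action of $\Hol_0(M,g)$ on $T_xM$ is reducible, or $(M,g)$ is locally symmetric irreducible, or $\Hol_0(M,g)$ is one of $\U(n)$, $\SU(n)$, $\Sp(n/2)$, $\Sp(n/2)\Sp(1)$, $\Spin(7)$; the generic $\SO(2n)$ is ruled out by hypothesis.

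In the reducible case, a finite covering of $M$ splits as a non-trivial parallel orthogonal direct sum $TM=\cH\oplus\cV$. Theorem \ref{thmred} forbids any parallel distribution of rank $d\in\{2,\dots,2n-2\}$, so (after swapping $\cH$ and $\cV$ if needed) one summand has rank $1$ and furnishes a parallel vector field. Theorem \ref{main am} then classifies the resulting structures: if $\theta$ is not exact, then up to sign it must coincide with the dual of this parallel vector field, so $\theta$ itself is parallel, the manifold is strictly lcK Vaisman, and $\Hol(M,g)$ is contained in the stabiliser $\SO(2n-1)$ of $\theta^\#$. Equality will hold, because any proper further reduction inside $\theta^\perp$ would produce a parallel distribution of forbidden rank in $\{2,\dots,2n-2\}$, contradicting Theorem \ref{thmred}; this yields case \ref{item 1}. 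If instead $\theta$ is exact we land in case (c) of item \ref{item 2}.

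Suppose now $\Hol_0(M,g)\subseteq\U(n)$. Then $g$ is K\"ahler with respect to some orthogonal complex structure $I$, and Theorem \ref{thm kaehler} forces $I$ and $J$ to commute and $\theta=\dd\f$ to be exact; in particular $(g,J,\theta)$ is gcK. If moreover $\Hol_0(M,g)\subseteq\SU(n)$, which covers the Berger subgroups $\SU(n)$ and $\Sp(n/2)$, then $g$ is Ricci-flat and Einstein, contradicting the positive scalar curvature assertion of Theorem \ref{p1}. Hence $\Hol_0(M,g)=\U(n)$, and Theorem \ref{p2} applied to the two non-homothetic K\"ahler metrics $g$ and $e^{-2\f}g$ in the conformal class $[g]$ yields case (a) of item \ref{item 2} for $n\ge 3$ (Calabi Ansatz of Proposition \ref{prop}) and case (b) for $n=2$ (ambik\"ahler in the sense of \cite{ACG2013}).

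The remaining Berger possibilities, $\Hol_0(M,g)\in\{\Sp(n/2)\Sp(1),\Spin(7)\}$ together with the locally symmetric irreducible case, all force $g$ to be Einstein. By Theorem \ref{p1} the structure is then gcK with positive scalar curvature, which immediately excludes the Ricci-flat $\Spin(7)$-holonomy. I expect the main obstacle to be the exclusion of the quaternion-K\"ahler and the locally symmetric irreducible alternatives: here one knows from Derdzinski--Maschler that for $n\ge 3$ the K\"ahler representative $e^{-2\f}g$ is one of B\'erard-Bergery's examples from \cite{Bergery1982}, and the task is to verify that the conformally rescaled metric $g=e^{2\f}(e^{-2\f}g)$ cannot have $\Sp(n/2)\Sp(1)$-holonomy nor be locally symmetric. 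The cleanest route should be to exploit the explicit warped-product structure of B\'erard-Bergery's construction over a K\"ahler--Einstein base and to invoke the rigidity of the quaternion-K\"ahler and locally symmetric conditions under a non-homothetic conformal change.
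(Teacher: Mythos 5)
Your overall skeleton (Berger--Simons case analysis dispatched to Theorems \ref{thmred}, \ref{main am}, \ref{thm kaehler}, \ref{p2} and the Einstein classification) matches the paper's, but two steps are genuinely incomplete.

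First, in the reducible case you assert that $\Hol(M,g)\simeq\SO(2n-1)$ because ``any proper further reduction inside $\theta^\perp$ would produce a parallel distribution of forbidden rank.'' This only excludes \emph{reducible} proper subgroups of $\SO(2n-1)$. A proper subgroup acting irreducibly on $\RM^{2n-1}$ --- e.g.\ $\G_2\subset\SO(7)$, or the holonomy of an irreducible symmetric space --- produces no parallel distribution at all, so your argument does not touch it. The paper devotes Corollary \ref{cor-hol} precisely to this point: writing the local de Rham factor as $\RM\times S$, it rules out $\Hol_0(S,g_S)$ being in the Berger list (only $\G_2$ is possible in odd dimension, and it is Ricci-flat, contradicting $\Ric^S(\xi)=(2n-2)\xi$ in the Vaisman case, resp.\ $\Ric^S(\partial_t,\partial_t)=(1-2n)\ddot f/f$ with $f$ non-constant periodic in the gcK case) and rules out $S$ locally symmetric by a separate curvature computation. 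Without this, the claims $\Hol(M,g)\simeq\SO(2n-1)$ in case \ref{item 1} and $\Hol_0(M,g)\simeq\SO(2n-1)$ in case \ref{item 2}c) are unproved.

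Second, your exclusion of the $\Sp(n/2)\Sp(1)$ and irreducible locally symmetric alternatives is a plan, not a proof: ``the cleanest route should be to exploit the explicit warped-product structure of B\'erard-Bergery's construction\dots and to invoke the rigidity\dots'' is exactly the step that needs to be carried out, and it is not obvious how to do it that way. The paper avoids B\'erard-Bergery's list entirely here: for the locally symmetric case, Proposition \ref{prop sym} shows directly that $\theta\equiv0$, using the Lichnerowicz--Tashiro theorem (a conformal Killing field of a compact K\"ahler metric is Killing) together with homogeneity to force the conformal factor to be constant; for the positive quaternion-K\"ahler case, it invokes the Gauduchon--Moroianu--Semmelmann result that the only compact positive QK manifolds admitting an almost complex structure are the complex Grassmannians of $2$-planes, which are symmetric and hence fall under Proposition \ref{prop sym}. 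You need some substitute for these two inputs; as written, the quaternion-K\"ahler and locally symmetric cases remain open in your argument. (The remaining cases --- $\SU(n)$, $\Sp(n/2)$, $\Spin(7)$ via Ricci-flatness and Theorem \ref{thm einstein}, and $\U(n)$ via Theorems \ref{thm kaehler} and \ref{p2} --- are handled correctly, modulo the standard double-cover step needed to descend the parallel complex structure from the universal cover.)
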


\section{Preliminaries on lcK manifolds}

A locally conformally K\"ahler (lcK) manifold is a connected Hermitian manifold $(M,g,J)$ of real dimension $2n\ge 4$ such that around each point, $g$ is conformal to a metric which is K\"ahler with respect to $J$. The covariant derivative of $J$ with respect to the Levi-Civita connection $\nabla$ of $g$ is determined by a closed 1-form $\theta$ (called the Lee form) via the formula (see e.g. \cite{Moroianu2015}):
\begin{equation}\label{nablaJ}
\nabla_X J=X\wedge J\theta+JX\wedge \theta, \qquad\forall\ X\in \T M.
\end{equation}
Recall that if $\tau$ is any $1$-form on $M$, $J\tau$ 
is the $1$-form defined by $(J\tau)(X):=-\tau(JX)$ 
for every $X\in\T M$, and $X\wedge\tau$ denotes 
the endomorphism of $\T M$ defined by $(X\wedge\tau)(Y):=g(X,Y)\tau^\sharp-\tau(Y)X$.
We will often identify $1$-forms and vector fields via the metric $g$, which will also be denoted by $\langle\cdot,\cdot\rangle$ when there is no ambiguity.

Let $\Omega:=g(J\cdot,\cdot)$ denote the associated 2-form of $J$. 
By \eqref{nablaJ}, its exterior derivative and co-differential are given by 
\begin{equation}\label{dOmega}\dd\Omega=2\theta\wedge\Omega,\end{equation}
and 
\begin{equation}\label{delta Omega}\delta\Omega=(2-2n)J\theta.\end{equation}

If $\theta\equiv0$, the structure $(g,J)$ is simply K\"ahler. If $\theta$ is not identically zero, then the lcK structure $(g,J,\theta)$ is called {\em proper}.
If $\theta=\dd\f$ is exact, then $\dd(e^{-2\f}\Omega)=0$, so
the conformally modified structure $(e^{-2\f}g,J)$ is K\"ahler, and
the structure $(g,J,\theta)$ is called {\em globally conformally K\"ahler} (gcK). 
The lcK structure is called {\em strictly} lcK if the Lee form $\theta$ is not exact and {\em Vaisman} if $\theta$ is parallel with respect to the Levi-Civita connexion of $g$.

A typical example of strictly lcK manifold, which is actually Vaisman, is $\SM^1\times \SM^{2n-1}$, 
endowed with the complex structure induced by the diffeomorphism 
$$(\C^n\setminus\{0\})/ \mathbb Z\longrightarrow \SM^1\times \SM^{2n-1}, \;\;
[z]\longmapsto \left(e^{2\pi i\ln |z|}, \frac{z}{|z|}\right),$$
 where $[z]:=\{e^k z\in \C^n\setminus\{0\}\,\vert\, k\in\mathbb Z\}$. The Lee form of this lcK structure is 
the length element of $\SM^1$, which is parallel.   

\begin{Remark}\label{cov}
For each lcK manifold $(M,g,J,\theta)$ there exists a group homomorphism 
from  $\pi_1(M)$ to $(\RM,+)$ which is trivial if and only if the structure is gcK. 
Indeed, $\pi_1(M)$ acts on the universal covering $\widetilde M$ of $M$, 
and preserves the induced lcK structure $(\tilde g, \tilde J, \tilde \theta)$. 
Since $\tilde\theta=\dd \f$ is exact on $\widetilde M$, for every 
$\gamma\in\pi_1(M)$ we have $\dd(\gamma^*\f)=\gamma^*(\dd \f)=\gamma^*(\tilde\theta)=\tilde\theta=\dd \f$, so there exists some real number $c_\gamma$ such that $\gamma^* \f=\f+c_\gamma$. The map $\gamma\mapsto c_\gamma$ is clearly a group morphism from  $\pi_1(M)$ to $(\RM,+)$, which is trivial if and only if $\theta$ is exact on $M$. This shows, in particular, that if $\pi_1(M)$ is finite, then every lcK structure on $M$ is gcK. 
\end{Remark}

For later use, we express, for every lcK structure $(g,J,\theta)$, the action of the Riemannian curvature tensor of $g$ on the Hermitian structure $J$.
\begin{Lemma}
The following formula holds for every vector fields $X,Y$ on a lcK manifold $(M,g,J,\theta)$:
\begin{multline}\label{RJ}
R_{X,Y}J=\theta(X)Y\wedge J\theta -\theta(Y)X\wedge J\theta
-\theta(Y)JX\wedge \theta +\theta(X)JY\wedge \theta\\
-|\theta|^2Y\wedge JX+|\theta|^2X\wedge JY+Y\wedge J\nabla_{X} \theta+JY\wedge \nabla_{X} \theta -X\wedge J\nabla_{Y}\theta 
 -JX\wedge \nabla_{Y}\theta.
\end{multline}
\end{Lemma}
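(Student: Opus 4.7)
The plan is to evaluate the curvature endomorphism $R_{X,Y}J$ by a direct application of the Ricci identity, exploiting equation \eqref{nablaJ} twice. Introduce the $\End(\T M)$-valued $1$-form $T$ defined by $T(X):=\nabla_X J=X\wedge J\theta+JX\wedge\theta$. Since $\nabla$ is torsion-free, the standard computation
$$\nabla_X\nabla_Y J-\nabla_Y\nabla_X J-\nabla_{[X,Y]}J=(\nabla_X T)(Y)-(\nabla_Y T)(X)+T(\nabla_X Y-\nabla_Y X-[X,Y])$$
reduces the problem to evaluating $R_{X,Y}J=(\nabla_X T)(Y)-(\nabla_Y T)(X)$.

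Next I would apply Leibniz to $T(Y)=Y\wedge J\theta+JY\wedge\theta$. Since the operation $\wedge$ is parallel and $\nabla_X(JY)=(\nabla_X J)Y+J\nabla_X Y$, the terms involving $\nabla_X Y$ cancel against $T(\nabla_X Y)$, leaving
$$(\nabla_X T)(Y)=Y\wedge\nabla_X(J\theta)+\bigl((\nabla_X J)Y\bigr)\wedge\theta+JY\wedge\nabla_X\theta.$$
The first term is expanded via $\nabla_X(J\theta)=(\nabla_X J)\theta^\sharp+J\nabla_X\theta$, and then $(\nabla_X J)\theta^\sharp=T(X)\theta^\sharp$ is evaluated from \eqref{nablaJ}: using the skew-symmetry of $J$ to get $(J\theta)(\theta^\sharp)=0$, a short calculation yields
$$(\nabla_X J)\theta^\sharp=\theta(X)J\theta^\sharp-(J\theta)(X)\theta^\sharp-|\theta|^2 JX.$$
In the same way, the second term is expanded by computing $(\nabla_X J)Y$ directly from \eqref{nablaJ}, giving
$$\bigl((\nabla_X J)Y\bigr)\wedge\theta=g(X,Y)\,J\theta\wedge\theta+\theta(JY)\,X\wedge\theta-\theta(Y)\,JX\wedge\theta,$$
after noting that $\theta^\sharp\wedge\theta=0$.

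Finally I would antisymmetrize over $X,Y$. The term $g(X,Y)J\theta\wedge\theta$ is symmetric and cancels. The combination $-(J\theta)(X)Y\wedge\theta+\theta(JY)X\wedge\theta$ and its $X\leftrightarrow Y$ swap add to zero thanks to the identity $(J\theta)(X)=-\theta(JX)$. The surviving contributions are exactly
$$\theta(X)Y\wedge J\theta-\theta(Y)X\wedge J\theta,\quad |\theta|^2(X\wedge JY-Y\wedge JX),\quad \theta(X)JY\wedge\theta-\theta(Y)JX\wedge\theta,$$
together with the four $\nabla\theta$-terms $Y\wedge J\nabla_X\theta+JY\wedge\nabla_X\theta-X\wedge J\nabla_Y\theta-JX\wedge\nabla_Y\theta$, reproducing \eqref{RJ}. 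The only genuine obstacle is bookkeeping: one must carefully distinguish between $(\nabla_X J)Y$ (an endomorphism applied to a vector) and $(\nabla_X J)\theta^\sharp$ (whose dual $1$-form enters the expansion of $\nabla_X(J\theta)$), and keep track of the numerous $X\leftrightarrow Y$ cancellations when antisymmetrizing.
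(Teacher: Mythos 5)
Your proposal is correct and follows essentially the same route as the paper: both compute $R_{X,Y}J$ as the antisymmetrized second covariant derivative of $J$ (the paper does this by taking $X,Y$ parallel at a point, you do it tensorially via the $\End(\T M)$-valued $1$-form $T$), then apply the Leibniz rule and \eqref{nablaJ} twice. Your intermediate formulas for $(\nabla_X J)\theta^\sharp$ and $\bigl((\nabla_X J)Y\bigr)\wedge\theta$ and the subsequent cancellations check out, so you have merely written out the ``straightforward calculation'' that the paper leaves to the reader.
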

\begin{proof}
Taking $X,Y$ parallel at the point where the computation is done and  applying \eqref{nablaJ}, 
we obtain:
\begin{eqnarray*}
R_{X,Y} J&=&\nabla_{X}(Y\wedge J\theta+JY\wedge \theta)-\nabla_{Y}(X\wedge J\theta+
JX\wedge \theta)\\&=&
Y\wedge (\nabla_XJ)(\theta)+(\nabla_XJ)(Y)\wedge \theta-X\wedge (\nabla_YJ)(\theta)-
(\nabla_YJ)(X)\wedge \theta\\&&
+Y\wedge J\nabla_{X} \theta+JY\wedge \nabla_{X} \theta -X\wedge J\nabla_{Y}\theta 
 -JX\wedge \nabla_{Y}\theta,
\end{eqnarray*}
which gives  \eqref{RJ} after a straightforward calculation using \eqref{nablaJ} again.
\end{proof}

Let  $\{e_i\}_{i=1,\ldots,2n}$ be a local orthonormal basis of $\T M$.
Substituting  $Y=e_j$ in \eqref{RJ}, taking the interior product with $e_j$ and summing over $j=1,\ldots,2n$ yields:
\begin{equation}\label{RJcontr}
\sum_{j=1}^{2n} (R_{X,e_j} J )(e_j)=(2n-3)\left(\theta(X) J\theta-|\theta|^2JX+ J\nabla_{X} \theta\right)-\theta(JX) \theta    -\nabla_{JX}\theta-JX\delta\theta,
\end{equation}
since the sum $\sum_{j=1}^{2n}g(J\nabla_{e_j}\theta,e_j)$ vanishes, as $\nabla\theta$ is symmetric.

\begin{Corollary}\label{flat}
If the metric $g$ of a compact lcK manifold $(M,g,J,\theta)$ is flat, then $\theta\equiv0$.
\end{Corollary}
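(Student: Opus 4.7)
Since $g$ is assumed flat, the left-hand side of \eqref{RJcontr} vanishes identically, and the resulting pointwise vector identity
\[
(2n-3)\bigl(\theta(X)J\theta-|\theta|^2JX+J\nabla_X\theta\bigr)-\theta(JX)\theta-\nabla_{JX}\theta-JX\,\delta\theta=0
\]
involves only $\theta$, $\nabla\theta$ and $\delta\theta$. My plan is to extract a scalar pointwise identity from this by taking the inner product with $JX$ and tracing over a local orthonormal basis $\{e_i\}_{i=1,\ldots,2n}$.

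Each of the six terms then collapses to a multiple of either $|\theta|^2$ or $\delta\theta$: the orthogonality of $J$ gives $\langle J\theta,Je_i\rangle=\theta(e_i)$ and $\sum_i\theta(Je_i)^2=|\theta|^2$; closedness of $\theta$ makes $\nabla\theta$ symmetric with trace $-\delta\theta$; and $\{Je_i\}$ is again an orthonormal basis, so $\sum_i(\nabla_{Je_i}\theta)(Je_i)=-\delta\theta$ as well. A direct bookkeeping of the six contractions yields
\[
-4(n-1)^2|\theta|^2-4(n-1)\,\delta\theta=0,
\]
equivalent for $n\ge 2$ to the pointwise identity $(n-1)|\theta|^2+\delta\theta=0$.

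To conclude, I integrate this over the compact manifold $M$: since $\int_M\delta\theta\,dv=0$ by Stokes' theorem, one is left with $(n-1)\int_M|\theta|^2\,dv=0$, which forces $\theta\equiv 0$ because $n\ge 2$. The only mildly delicate point is the sign and coefficient bookkeeping when contracting the six terms; conceptually there is no obstacle, as flatness alone annihilates the curvature contribution and the algebraic structure of \eqref{RJcontr} does the rest. In particular no Weitzenb\"ock refinement of $\nabla^*\nabla\theta$ is needed, which makes the corollary a particularly direct consequence of the identity just derived.
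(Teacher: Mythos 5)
Your proposal is correct and follows exactly the paper's own argument: contract the flat-case identity from \eqref{RJcontr} with $JX$ for $X=e_j$, sum over an orthonormal basis to obtain $-(2n-2)^2|\theta|^2-2(2n-2)\delta\theta=0$ (which is your $-4(n-1)^2|\theta|^2-4(n-1)\delta\theta=0$), and integrate over the compact manifold. The coefficient bookkeeping you flag as the only delicate point does come out as you state, so nothing is missing.
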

\begin{proof}
If the Riemannian curvature of $g$ vanishes, \eqref{RJcontr} yields
\begin{equation*}
0=(2n-3)\left(\theta(X) J\theta-|\theta|^2JX+ J\nabla_{X} \theta\right)-\theta(JX) \theta    -\nabla_{JX}\theta-JX\delta\theta.
\end{equation*}

We make the scalar product with $JX$ in this equation for $X=e_j$, where $\{e_j\}_{j=1,\ldots,2n}$ is a local orthonormal basis of $\T M$, and sum over $j=1,\ldots,2n$ to obtain:
\begin{eqnarray*}
0&=&(2n-3)\left(|\theta|^2-2n|\theta|^2-\delta \theta\right)-|\theta|^2 +\delta\theta-2n\delta\theta\\
&=&-(2n-2)^2|\theta|^2-2(2n-2)\delta\theta.
\end{eqnarray*}
Since $n\ge 2$, this last equation yields $\delta\theta=(1-n)|\theta|^2$, which by Stokes' Theorem after integration over $M$ gives $\theta\equiv0$.
\end{proof}

The following example shows that the corollary does not hold without the compactness assumption.

\begin{Example}\label{expl}
Consider the standard flat K\"ahler structure $(g_0,J_0)$ on $M:=\mathbb{C}^n\setminus \{0\}$. If $r$ denotes the map $x\mapsto r(x):=|x|$, the conformal metric $g:=r^{-4}g_0$ on $M$ is gcK with respect to $J_0$, with Lee form $\theta=-2\dd\ln r$. Moreover $g$ is flat, being the pull-back of $g_0$ through the inversion $x\mapsto x/r^2$.
\end{Example}

\section{Compact Einstein lcK manifolds}\label{s2}
The purpose of this section is to classify compact Einstein proper lcK manifolds. 
We treat separately the two possible cases: non-negative and positive scalar curvature. In the non-negative case, we show that the 
Lee form must vanish, so the manifold is already K\"ahler. In the positive case, it follows that the manifold is globally conformally K\"ahler and one can use Maschler-Derdzinski's classification of conformally-Einstein K\"ahler metrics, for complex dimension $n\geq 3$, and the results of X. Chen, C. LeBrun and B. Weber \cite{CLW2008} for complex surfaces. 

Let $(M,g,J,\theta)$ be an lcK manifold. We denote by $S$ the following symmetric 2-tensor:
\begin{equation}\label{defS}
S:=\nabla \theta+\theta \otimes\theta,
\end{equation}
identified with a symmetric endomorphism via the metric $g$. 
\begin{Lemma}\label{commute}
On an lcK manifold $(M,g,J,\theta)$ with $g$ Einstein, $S$ commutes with $J$.
\end{Lemma}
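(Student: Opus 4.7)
The plan is to reduce the claim to a pointwise local computation and to exploit the standard conformal transformation of the Ricci tensor, which converts the Einstein hypothesis on $g$ into a $J$-invariance statement for a locally defined auxiliary tensor. Since $SJ-JS$ is a pointwise tensor, it suffices to verify $[S,J]=0$ on an arbitrary small open set $U\subset M$. Because $\theta$ is closed, after shrinking $U$ we may write $\theta|_U=\dd\f$, and then $g_K:=e^{-2\f}g$ is K\"ahler with respect to $J$ on $U$.

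With $g=e^{2\f}g_K$, the standard conformal Ricci formula in real dimension $2n\ge 4$ reads
\begin{equation*}
\Ric_g \;=\; \Ric_{g_K} \;-\; (2n-2)\bigl(\nabla^{g_K}\dd\f-\dd\f\otimes\dd\f\bigr) \;-\; \bigl[\Delta^{g_K}\f+(2n-2)|\dd\f|^2_{g_K}\bigr]\,g_K.
\end{equation*}
Now $\Ric_g=\tfrac{s}{2n}g$ is $J$-invariant because $g$ is Hermitian; $\Ric_{g_K}$ is $J$-invariant since $(g_K,J)$ is K\"ahler; and the $g_K$-proportional correction is trivially $J$-invariant. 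As $2n-2\neq 0$, this forces $T:=\nabla^{g_K}\dd\f-\dd\f\otimes\dd\f$ to be $J$-invariant as a symmetric bilinear form, i.e., to commute with $J$ as a symmetric endomorphism.

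It then remains to translate $T$ back to the $g$-intrinsic tensor $S$. A routine application of the conformal change of the Levi-Civita connection to the $1$-form $\theta=\dd\f$ gives $\nabla^{g_K}\dd\f=\nabla\theta+2\theta\otimes\theta-|\theta|^2 g$, whence $T=S-|\theta|^2 g$. Since $g$ itself is $J$-invariant, the $J$-invariance of $T$ is equivalent to that of $S$, which is the desired conclusion. The argument is structurally transparent; the only point requiring care is the sign bookkeeping in the conformal change of $\nabla\dd\f$, and the Einstein hypothesis enters in a single clean step to ensure that the non-$g_K$-proportional part of the Ricci correction is $J$-invariant.
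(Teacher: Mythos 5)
Your proof is correct and follows essentially the same route as the paper: localize so that $\theta=\dd\f$, apply the conformal transformation law for the Ricci tensor, and use the $J$-invariance of $\Ric_g$ (Einstein plus Hermitian) and of $\Ric_{g_K}$ (K\"ahler) to force the Hessian term to commute with $J$. The only cosmetic difference is that you write the formula with $\nabla^{g_K}\dd\f$ and then convert back via $\nabla^{g_K}\dd\f=\nabla^g\dd\f+2\theta\otimes\theta-|\theta|^2g$, whereas the paper uses the version with $\nabla^g\dd\f$ and reads off $S$ directly; your conversion and signs check out.
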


\begin{proof}
Since the statement is local, we may assume without loss of generality that the Lee form is exact, $\theta=\dd \f$, which means that $g_K:=e^{-2\f}g$ is K\"ahler with respect to $J$. We denote the Einstein constant of $g$ by $\lambda$. 

The formula relating the Ricci tensors of conformally equivalent metrics \cite[Theorem 1.159]{Besse2008} reads:
\begin{equation*}
\Ric^K=\Ric^g+2(n-1)\left(\nabla^g \dd \f +\dd \f \otimes \dd \f \right)-\left(\Delta^g \f +2(n-1)g(\dd \f ,\dd \f )\right)g.
\end{equation*}
Since $g^K$ is K\"ahler, $\Ric^K(J\cdot,J\cdot)=\Ric^K(\cdot,\cdot)$. Using this fact, together with $g(J\cdot,J\cdot)=g(\cdot,\cdot)$ and $\Ric^g=\lambda g$ in the above formula, we infer:
\begin{equation}\label{com}
\left(\nabla^g \dd \f +\dd \f \otimes \dd \f \right)(J\cdot,J\cdot)=\left(\nabla^g \dd \f +\dd \f \otimes \dd \f \right)(\cdot,\cdot),
\end{equation}
which is equivalent to $SJ=JS$.
\end{proof}

The main result of this section is the following:

\begin{Theorem}\label{thm einstein}
If $(M,g,J,\theta)$ is a compact lcK manifold and $g$ is Einstein with non-positive scalar curvature, then $\theta\equiv0$, so $(M,g,J)$ is a K\"ahler-Einstein manifold.
\end{Theorem}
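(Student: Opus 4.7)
The plan is to apply Weitzenb\"ock-type arguments to the closed Lee form $\theta$ and combine them with Lemma \ref{commute} and the sign condition $\lambda\le 0$ on the Einstein constant. Since $\dd\theta=0$, the Hodge Laplacian reduces to $\Delta_H\theta=\dd\delta\theta$, and the Weitzenb\"ock formula gives $\nabla^*\nabla\theta=\dd\delta\theta-\lambda\theta$. Pairing with $\theta$ and integrating yields the Bochner identity
$$\int_M|\nabla\theta|^2=\int_M(\delta\theta)^2-\lambda\int_M|\theta|^2,$$
which with $\lambda\le 0$ implies the strong inequality $\int_M|\nabla\theta|^2\ge\int_M(\delta\theta)^2$.

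The algebraic ingredient comes from Lemma \ref{commute}. Decomposing the symmetric bilinear form $\nabla\theta$ into its $J$-invariant and $J$-anti-invariant parts $(\nabla\theta)^{\pm}$, defined by $\alpha^\pm(X,Y)=\tfrac12(\alpha(X,Y)\pm\alpha(JX,JY))$, the vanishing of the $J$-anti-invariant part of the symmetric tensor $S=\nabla\theta+\theta\otimes\theta$ (which is the pointwise content of $SJ=JS$) forces $(\nabla\theta)^-=-(\theta\otimes\theta)^-$. Since $\langle\theta,J\theta\rangle=0$, a direct computation gives $|(\theta\otimes\theta)^-|^2=\tfrac12|\theta|^4$, hence the pointwise identity $|(\nabla\theta)^-|^2=\tfrac12|\theta|^4$. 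Using the orthogonal splitting of $\nabla\theta$ and the Cauchy-Schwarz bound on its trace part, this yields pointwise $|\nabla\theta|^2\ge\tfrac12|\theta|^4+(\delta\theta)^2/(2n)$.

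To close the argument one derives a second integral identity by contracting the curvature equation \eqref{RJ} in the spirit of the proof of Corollary \ref{flat}, but now using $\Ric=\lambda g$ in place of $R\equiv 0$. The LHS then contributes Ricci-type curvature terms computable explicitly under the Einstein hypothesis, while the RHS remains expressible via \eqref{nablaJ} in terms of $S$, $|\theta|^2$, and $\delta\theta$. Combining the resulting identity with the Bochner formula, the pointwise bound $|(\nabla\theta)^-|^2=|\theta|^4/2$, and appropriate integration by parts (using $\int_M\delta\theta=0$ and the elementary identity $\int_M\nabla\theta(\theta^\sharp,\theta^\sharp)=\tfrac12\int_M|\theta|^2\delta\theta$), and exploiting $\lambda\le 0$ at a critical juncture, one aims to produce an integrand which is pointwise nonnegative yet has nonpositive integral, forcing $\theta\equiv 0$.

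The main obstacle is this final synthesis: one must identify the right linear combination of integral identities so that the sign $\lambda\le 0$ turns an a-priori integral equality into a strict obstruction to $\theta\not\equiv 0$. The remainder of the proof --- $J$-invariant decompositions, Frobenius norm computations, and integration by parts --- is essentially bookkeeping.
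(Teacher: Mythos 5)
Your setup is sound as far as it goes: the Bochner identity for the closed form $\theta$ is correct, and the observation that $SJ=JS$ forces $(\nabla\theta)^-=-(\theta\otimes\theta)^-$, hence $|(\nabla\theta)^-|^2=\tfrac12|\theta|^4$ pointwise, is a correct and rather nice consequence of Lemma \ref{commute} that the paper does not state in this form. However, the proof has two genuine gaps. First, the ``second integral identity'' you propose to obtain by contracting \eqref{RJ} as in Corollary \ref{flat} is not available under the Einstein hypothesis: the contraction $\sum_j(R_{X,e_j}J)(e_j)$ contains the term $\sum_j R_{X,e_j}(Je_j)$, which is a contraction of the full curvature tensor against $\Omega$ (a curvature-operator term on $2$-forms), and is \emph{not} determined by $\Ric=\lambda g$. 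In Corollary \ref{flat} the left-hand side could be set to zero only because the whole curvature vanished. Second, and more importantly, the decisive step --- ``the right linear combination of integral identities'' that turns $\lambda\le0$ into an obstruction --- is exactly what is left unproven; your first three steps only yield $\tfrac12\int|\theta|^4\le(1-\tfrac1{2n})\int(\delta\theta)^2-\lambda\int|\theta|^2$, an inequality in the unhelpful direction.

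For comparison, the paper avoids the curvature-operator obstruction by applying the Bochner formula to \emph{both} $1$-forms $\theta$ and $J\theta$ (where only $\Ric$ enters), together with a direct computation of $\delta(JS)$ in terms of $J(\delta S)$; combining the three identities eliminates $\delta S$ and $\delta(JS)$ and produces the \emph{pointwise} identity $\dd f=(2\lambda-3f+(4-2n)|\theta|^2)\,\theta$ for $f:=\delta\theta+|\theta|^2$. The conclusion is then reached not by integration but by a maximum-principle argument at a point where $f$ attains its (positive) maximum. This pointwise route is essential: the naive integrated version of the paper's identity contains a term $(2n-5)\int|\theta|^4$ whose sign is wrong for $n=2$, so a purely integral scheme of the kind you outline is unlikely to close in complex dimension $2$ even if the identities were assembled correctly.
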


\begin{proof}
Let $\{e_i\}_{i=1,\dots, 2n}$ be a local orthonormal basis which is
parallel at the point where the computation is done.
We denote by $\lambda\leq 0$ the Einstein constant of the metric $g$, so $\Ric=\lambda g$.
The strategy of the proof is to apply the Bochner formula to the $1$-forms $\theta$ and $J\theta$ in order to obtain a formula relating the Einstein constant, the co-differential of the Lee form and its square norm, which leads to a contradiction (if $\theta$ is not identically zero) at a point where $\delta\theta+|\theta|^2$ attains its maximum.

Let $S$ denote as above the endomorphism $S=\nabla \theta+\theta \otimes\theta$. In particular, we have
\begin{equation}\label{Sth}
S\theta=\nabla_{\theta} \theta+|\theta|^2\theta=\frac{1}{2}\dd |\theta|^2+|\theta|^2\theta
\end{equation}
and the trace of $S$ is computed as follows
\begin{equation}\label{trS}
\tr(S)=|\theta|^2-\delta\theta.
\end{equation}
In the sequel, we use Lemma~\ref{commute}, ensuring that $S$ commutes with $J$. We start by computing the covariant derivative of $J\theta$:
\begin{equation}\label{eq nablaJth}
\begin{split}
\nabla_X J\theta&=(\nabla_X J)(\theta)+ J(\nabla_X \theta)\overset{\eqref{nablaJ}}{=}(X\wedge J\theta+JX\wedge\theta)(\theta)+J(SX-\theta(X)\theta)\\
&=JS X-J\theta(X)\theta-|\theta|^2JX.
\end{split}
\end{equation}
The exterior differential of $J\theta$ is then given by the following formula: 
\begin{equation}\label{diffJth}
\begin{split}
\dd J\theta=\sum_{i=1}^{2n} e_i\wedge \nabla_{e_i}J\theta=2JS +\theta\wedge J\theta-2|\theta|^2 \Omega.
\end{split}
\end{equation}
We further compute the Lie bracket between $\theta$ and $J\theta$ (viewed as vector fields):
\begin{equation}\label{lieJth} 
 [\theta, J\theta]=\nabla_{\theta} J\theta-\nabla_{J\theta}\theta\overset{\eqref{defS}, \eqref{eq nablaJth}}{=} JS\theta-|\theta|^2J\theta-SJ\theta=-|\theta|^2J\theta.
 \end{equation} 

By \eqref{delta Omega}, we have $\delta J\theta=0$. Using the following identities:
\begin{equation}\label{codiffth}
\delta(\theta\wedge J\theta) =(\delta\theta)J\theta -\delta(J\theta)\theta-[\theta,J\theta]\overset{\eqref{lieJth}}{=}(\delta\theta+|\theta|^2) J\theta,
\end{equation}
\begin{equation}\label{codiffom}
\delta(|\theta|^2\Omega) =-J(\dd|\theta|^2 )+|\theta|^2\delta\Omega \overset{\eqref{delta Omega}}{=}-J(\dd |\theta|^2)+(2-2n)|\theta|^2J\theta,
\end{equation}
we compute the Laplacian of $J\theta$: 
\begin{equation}\label{eq lap}
\begin{split}
\Delta J\theta&=\delta \dd J\theta \overset{\eqref{diffJth}}{=}\delta(2JS +\theta\wedge J\theta-2|\theta|^2 \Omega)\\
&\overset{\eqref{codiffth}, \eqref{codiffom}}{=}2\delta(JS)+ (\delta\theta+|\theta|^2) J\theta+2J(\dd |\theta|^2)+2(2n-2)|\theta|^2J\theta\\
&=2\delta(JS)+\delta\theta J\theta+2J(\dd |\theta|^2)+(4n-3)|\theta|^2J\theta.
\end{split}
\end{equation}

We next compute the rough Laplacian of $J\theta$:
\begin{equation}\label{eq connlap}
\begin{split}
\nabla^*\nabla J\theta&=-\sum_{i=1}^{2n}\nabla_{e_i}\nabla_{e_i} J\theta\overset{\eqref{eq nablaJth}}{=}-\sum_{i=1}^{2n}\nabla_{e_i}( JS e_i-J\theta(e_i)\theta-|\theta|^2Je_i)\\
&=\delta (JS)+\nabla_{J\theta}\theta+J\dd (|\theta|^2)+|\theta|^2\sum_{i=1}^{2n}(\nabla_{e_i} J)(e_i)\\
&\overset{\eqref{delta Omega}}{=}\delta (JS)+SJ\theta+J\dd (|\theta|^2)+(2n-2)|\theta|^2J\theta\\
&\overset{\eqref{Sth}}{=}\delta (JS)+|\theta|^2J\theta+\frac{3}{2}J\dd (|\theta|^2)+(2n-2)|\theta|^2J\theta.
\end{split}
\end{equation}
Using the Bochner formula $\Delta J\theta=\nabla^*\nabla J\theta+\Ric(J\theta)$ together with \eqref{eq lap} and \eqref{eq connlap} yields:
\begin{equation*}
\delta(JS)=-(\delta\theta) J\theta-\frac{1}{2}J(\dd |\theta|^2)-2(n-1)|\theta|^2J\theta+\lambda J\theta,
\end{equation*}
which, after applying $J$ on both sides, reads:
\begin{equation}\label{eqJdel}
J\delta(JS)=(\delta\theta) \theta+\frac{1}{2}\dd |\theta|^2+2(n-1)|\theta|^2\theta-\lambda \theta.
\end{equation}

The rough Laplacian of $\theta$ is computed as follows:
\begin{equation}\label{connLth}
\begin{split}
\nabla^*\nabla\theta=-\sum_{i=1}^{2n}\nabla_{e_i}\nabla_{e_i} \theta=-\sum_{i=1}^{2n}\nabla_{e_i}(Se_i-\theta(e_i)\theta)=\delta S-(\delta\theta)\theta+\frac{1}{2}\dd |\theta|^2.
\end{split}
\end{equation}

The Bochner formula 
$\Delta\theta=\nabla^*\nabla\theta+\Ric(\theta)$, together with \eqref{connLth} yields
\begin{equation}\label{eqJdel2}
\delta S=(\delta\theta)\theta-\frac{1}{2}\dd |\theta|^2-\lambda \theta+\dd \delta\theta.
\end{equation}

On the other hand, we have:
\begin{equation*}
\begin{split}
\delta (JS)&=-\sum_{i=1}^{2n}(\nabla_{e_i} JS)(e_i)
=-\sum_{i=1}^{2n}(\nabla_{e_i} J)(S e_i)-\sum_{i=1}^{2n}J(\nabla_{e_i} S)(e_i)\\
&\overset{\eqref{nablaJ}}{=}-\sum_{i=1}^{2n}(e_i\wedge J\theta+Je_i\wedge\theta)(S e_i)+J(\delta S)\\
&=-\tr(S)J\theta+2JS\theta+J(\delta S)\overset{\eqref{Sth},\eqref{trS}}{=}(\delta\theta) J\theta+J(\dd |\theta|^2)+|\theta|^2J\theta+J(\delta S).
\end{split}
\end{equation*}
Applying $J$ to this equality yields
\begin{equation}\label{eqJdel3}
J\delta (JS)+\delta S=-(\delta\theta) \theta-\dd |\theta|^2-|\theta|^2\theta.
\end{equation}

Summing up \eqref{eqJdel} and \eqref{eqJdel2}, and comparing with \eqref{eqJdel3}, we obtain:
\begin{equation}\label{summ}
3(\delta\theta)\theta-2\lambda \theta+\dd \delta\theta+\dd |\theta|^2+(2n-1)|\theta|^2\theta=0.
\end{equation}
After introducing the function $f:=\delta\theta+|\theta|^2$, \eqref{summ} reads:
\begin{equation}\label{eqf}
\dd f=(2\lambda -3f+(4-2n)|\theta|^2)\theta.
\end{equation}

We argue by contradiction and assume that $\theta$ is not identically zero. In this case, the integral of $f$ over $M$ is positive. As $M$ is compact, there exists $p_0\in M$ at which $f$ attains its maximum, $f(p_0)>0$. In particular, we have $(\dd f)_{p_0}=0$ and $(\Delta f) (p_0)\geq 0$. Applying \eqref{eqf} at the point $p_0$ yields that $\theta_{p_0}=0$, because $2\lambda-3f(p_0)+(4-2n)|\theta_{p_0}|^2<0$. From the definition of $f$, it follows that $\delta\theta(p_0)>0$.

On the other hand, taking the co-differential of \eqref{eqf}, we obtain:
\begin{equation*}
\Delta f=(2\lambda -3f+(4-2n)|\theta|^2)\delta\theta +3\theta(f)+(2n-4)\theta(|\theta|^2).
\end{equation*}
Evaluating at $p_0$ leads to a contradiction, since the left-hand side is non-negative and the right-hand side is negative, as $\theta_{p_0}=0$ and $(2\lambda -3f(p_0))\delta\theta(p_0)<0$.  
Thus, $\theta\equiv0$.
\end{proof}

\medskip

Note that in complex dimension $n=2$, 
 C.~LeBrun~\cite{LeBrun1997} showed, by extending results of A.~Derdzinski~\cite{Derdzinski1983}, that a Hermitian non-K\"ahler Einstein metric on a compact complex surface is necessarily conformal to a K\"ahler metric and has positive scalar curvature. In particular, this result implies the statement of Theorem~\ref{thm einstein} for complex surfaces. However, the method of our proof works in all dimensions.

If $(M^{2n},g,J,\theta)$ is a compact lcK manifold and $g$ is Einstein with positive scalar curvature, then by Myers' Theorem and Remark \ref{cov}, $(M,g,J)$ is gcK. The classification of conformally K\"ahler compact Einstein manifolds in complex dimension $n\geq 3$ has been obtained by A.~ Derdzinski and G.~Maschler in a series of three papers \cite{DM2003,DM2005,DM2006}. They showed that the only examples are given by the construction of L. B\' erard-Bergery, \cite{Bergery1982}. 
In complex dimension $n=2$, the only compact complex surfaces which might admit proper gcK Einstein metrics are the blow-up of $\CM P^2$ at one, two or three points in general position, according to a result of C.~LeBrun, \cite[Theorem A]{LeBrun1997}.  Moreover, in the one point case, he showed that, up to rescaling and isometry, the only such metric is the well-known Page metric, \cite{Page1978}. The existence of a Hermitian Einstein metric on the blow-up of $\CM P^2$ at two different points was proven by X.~Chen, C.~LeBrun and B.~Weber in \cite{CLW2008}.

Theorem \ref{thm einstein} and the above remarks complete the proof of Theorem \ref{p1}.

\section{The holonomy problem for compact lcK manifolds}\label{shol}

Our next aim is to study compact lcK manifolds $(M,g,J,\theta)$ of complex dimension $n\ge 2$ with non-generic holonomy group: $\Hol_0(M,g)\subsetneq\SO(2n)$. By the Berger-Simons holonomy theorem, the following exclusive possibilities may occur:
\begin{itemize}
\item The restricted holonomy group $\Hol_0(M,g)$ is reducible;
\item $\Hol_0(M,g)$ is irreducible and $(M,g)$ is locally symmetric;
\item $M$ is not locally symmetric, and $\Hol_0(M,g)$ belongs to the following list: $\U(n)$, $\SU(n)$, $\Sp(n/2)$, $\Sp(n/2)\Sp(1)$, $\Spin(7)$ (for $n=4$).
\end{itemize}

\subsection{The reducible case}\label{sired}

In this section we classify the compact  lcK manifolds with reducible restricted holonomy. We start with the following result (for a proof see for 
instance the first part of the proof of \cite[Theorem 4.1]{BM2015}):
\begin{Lemma}\label{hol0}
If $(M,g)$ is a compact Riemannian manifold with 
$\Hol_0(M,g)$ reducible, then there exists a finite covering $\overline{M}$ of $M$, such that 
$\Hol(\overline{M},\bar g)$ is reducible, where $\bar g$ denotes the pull-back of $g$ to $\overline M$.
\end{Lemma}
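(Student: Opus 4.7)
The plan is to lift the problem to the universal cover, where the restricted holonomy coincides with the full holonomy, and then invoke de Rham's decomposition theorem. Let $\widetilde M \to M$ be the universal cover, equipped with the pull-back metric $\widetilde g$. Compactness of $M$ implies completeness of $\widetilde M$, and $\Hol(\widetilde M, \widetilde g)=\Hol_0(M,g)$ is reducible by assumption. De Rham's theorem then yields a Riemannian product splitting
\[
\widetilde M = \widetilde M_0 \times \widetilde M_1 \times \cdots \times \widetilde M_k,
\]
with $\widetilde M_0$ a (possibly trivial) Euclidean factor and each $\widetilde M_i$ for $i\ge 1$ a complete simply-connected Riemannian manifold with irreducible holonomy representation; reducibility of $\Hol_0(M,g)$ forces the decomposition to be non-trivial.

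The next step is to exploit the uniqueness clause of de Rham's theorem: any isometry of $\widetilde M$ preserves the Euclidean factor and permutes the irreducible factors, pairing only pairwise isometric ones. Applying this to the deck transformations of $\widetilde M \to M$ gives a homomorphism $\rho: \pi_1(M) \to \mathrm{Sym}(k)$ with finite image. Setting $H:=\ker\rho$, one obtains a finite-index subgroup of $\pi_1(M)$ and a corresponding finite covering $\overline M := \widetilde M/H$ of $M$, to which the metric descends as $\bar g$.

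By construction, every $h\in H$ preserves each factor $\widetilde M_i$ of the de Rham decomposition. A standard fact -- any isometry of a Riemannian product that preserves every factor setwise must split as a product of isometries of the factors -- then shows that $h$ acts diagonally. Consequently the parallel subbundles $T\widetilde M_i\subset T\widetilde M$ are $H$-invariant and descend to parallel subbundles $E_i\subset T\overline M$, producing an orthogonal splitting $T\overline M = E_0\oplus E_1\oplus\cdots\oplus E_k$ into parallel (and, by construction, non-trivially split) distributions. This directly forces $\Hol(\overline M,\bar g)$ to be reducible.

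The main delicacy is the uniqueness invoked in step two: when several irreducible factors $\widetilde M_i$ happen to be mutually isometric, the factorisation itself is not canonical (one can freely mix isomorphic factors), so one must argue that the induced action on the unordered collection of factors -- and hence the map $\rho$ -- is nevertheless a well-defined homomorphism with finite image. A short separate remark should also cover the purely flat case $\widetilde M=\widetilde M_0$, where the above $\rho$ is trivial and one instead appeals to Bieberbach's theorem, which guarantees a finite cover by a flat torus with trivial (a fortiori reducible) holonomy. Once these combinatorial points are in place, the descent to $\overline M$ is formal.
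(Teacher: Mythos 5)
The paper does not prove this lemma at all: it simply refers the reader to the first part of the proof of Theorem 4.1 in \cite{BM2015}, so there is no in-text argument to compare against. Your proof is correct and self-contained, and it is close in spirit to the cited one: where you pass to the universal cover, apply the global de Rham splitting $\widetilde M=\widetilde M_0\times\widetilde M_1\times\cdots\times\widetilde M_k$, and use the structure theorem for isometries of such a product to get the permutation homomorphism $\rho\colon\pi_1(M)\to\mathrm{Sym}(k)$, the argument in \cite{BM2015} works directly with the decomposition of $T_pM$ into $\Hol_0$-irreducible summands and uses that $\Hol_0(M,g)$ is normal in $\Hol(M,g)$ to obtain the same finite permutation representation of $\pi_1(M)$; the two routes produce the same finite-index kernel and the same finite cover. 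Two small points on your write-up. First, the ``delicacy'' about mutually isometric factors is milder than you suggest: the de Rham distributions $T\widetilde M_i\subset T\widetilde M$ are canonical (unique up to order) even when two factors are isometric, because $\Hol(\widetilde M)$ is the \emph{product} of the factor holonomies, so $T\widetilde M_i$ and $T\widetilde M_j$ are inequivalent as representations of the full product for $i\neq j$; the uniqueness clause of de Rham's theorem already gives exactly what you need, so $\rho$ is well defined with no extra work. Second, you are right that the purely flat case $\widetilde M=\widetilde M_0$ must be treated separately via Bieberbach (finite cover by a flat torus, whose trivial holonomy is reducible); this case is genuinely outside the permutation argument and is worth keeping as an explicit remark. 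With those two points spelled out, the descent of the parallel distributions $T\widetilde M_i$ to $\overline M=\widetilde M/\ker\rho$ is indeed formal and the proof is complete.
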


Let now $(M,g,J,\theta)$ be a compact proper lcK manifold of complex dimension $n\ge2$ with $\Hol_0(M,g)$ reducible. Lemma \ref{hol0} shows that by replacing $M$ with some (compact) finite covering $\overline M$, and by pulling back the lcK structure to $\overline M$, one may assume that the tangent bundle can be decomposed as $\T M=D_1\oplus D_2$, where $D_1$ and $D_2$ are two parallel orthogonal oriented distributions of rank $n_1$, respectively $n_2$, with $2n=n_1+n_2$. By taking a further double covering if necessary, we may assume that the distributions are oriented. We first show that the case $n_1\ge2$ and $n_2\ge 2$ is impossible if the lcK structure is proper.

\begin{Theorem}\label{thmred}
Let $(M,g,J,\theta)$ be a compact  lcK manifold of complex dimension $n\ge2$. If there exist two orthogonal parallel oriented distributions $D_1$ and $D_2$, of respective ranks $n_1\ge 2$ and  $n_2\ge2$, such that $\T M=D_1\oplus D_2$, then $\theta\equiv0$.
\end{Theorem}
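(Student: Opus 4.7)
The plan is to argue by contradiction: suppose $\theta \not\equiv 0$ and derive a contradiction via a Bochner-type scalar equation, in the spirit of Theorem \ref{thm einstein}.

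Since the parallel, orthogonal, complementary distributions $D_1, D_2$ (both of rank $\geq 2$) span $\T M$, the manifold $(M,g)$ is locally a Riemannian product $(M_1\times M_2, g_1+g_2)$. Two consequences will be essential: (a) the Ricci tensor $\Ric^g$ preserves each $D_i$, and (b) $R_{X,Y}=0$ whenever $X\in D_1$ and $Y\in D_2$. I would rerun the Weitzenb\"ock/Bochner derivation from the proof of Theorem~\ref{thm einstein} without substituting the Einstein condition $\Ric^g=\lambda g$. The only places in that derivation where Einstein is used are the Bochner identities $\D\theta = \nabla^*\nabla\theta + \Ric^g(\theta)$ and $\D J\theta = \nabla^*\nabla J\theta + \Ric^g(J\theta)$. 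Keeping $\Ric^g(\theta), \Ric^g(J\theta)$ unchanged, the same derivation yields the generalization of \eqref{summ}
\begin{equation*}
3(\delta\theta)\theta + (2n-1)|\theta|^2\theta + \dd\delta\theta + \dd|\theta|^2 + A = 0, \qquad A := J\Ric^g(J\theta) - \Ric^g(\theta),
\end{equation*}
which recovers \eqref{summ} in the Einstein case, as $A=-2\lambda\theta$ there.

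To express $A$ intrinsically, I would use the identity
\[
[S,J] \,=\, -\tfrac{1}{2(n-1)}[\Ric^g,J], \qquad S := \nabla\theta+\theta\otimes\theta,
\]
valid on any lcK manifold (obtained by comparing the $J$-invariance of $\Ric^{g_K}$ on the local K\"ahler metric $g_K = e^{-2\varphi}g$ with the conformal-change formula for the Ricci tensor). Combining this with the algebraic identity $J\Ric^g(J\theta) = J[\Ric^g,J]\theta - \Ric^g(\theta)$ and with $S\theta = \tfrac{1}{2}\dd|\theta|^2 + |\theta|^2\theta$ from \eqref{Sth} gives $A = -2(n-1)JSJ\theta - (n-1)\dd|\theta|^2 - 2(n-1)|\theta|^2\theta - 2\Ric^g(\theta)$. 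Substituting back, the generalized summary equation becomes
\begin{equation*}
3(\delta\theta)\theta + |\theta|^2\theta + \dd\delta\theta + (2-n)\dd|\theta|^2 - 2(n-1)JSJ\theta - 2\Ric^g(\theta) = 0.
\end{equation*}
The remaining non-trivial terms $JSJ\theta$ and $\Ric^g(\theta)$ must then be controlled using (a) (block form of $\Ric^g$), the decompositions $\theta=\theta_1+\theta_2$ and $J=(J_{ij})_{i,j\in\{1,2\}}$ with $\theta_i \in D_i$ and $J_{ij}:D_j\to D_i$, and consequence (b) by specializing \eqref{RJ} to $X\in D_1, Y\in D_2$ (where the LHS $R_{X,Y}J$ vanishes, giving a pointwise tensorial identity in $\theta, \nabla\theta$ and the blocks $J_{ij}$). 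The hypothesis $n_1, n_2 \geq 2$ enters at this point by providing enough orthogonal vectors within each $D_i$ to convert the resulting tensorial constraints into scalar ones.

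The last step is to integrate the scalar equation so obtained over the compact manifold $M$. All $\dd$-exact terms vanish, and the integrals $\int_M(\delta\theta)|\theta|^2$ and $\int_M \Ric^g(\theta,\theta)$ are handled by Stokes' theorem together with the Bochner formula for the closed 1-form $\theta$. The surviving terms should combine into an expression of definite sign, forcing $\theta\equiv 0$ either by direct non-negativity or by a maximum-principle argument analogous to the one at the end of the proof of Theorem \ref{thm einstein}. The main obstacle is the control of $JSJ\theta$: this term mixes the covariant derivative of $\theta$ with the off-diagonal blocks $J_{12}, J_{21}$ of $J$ (which measure the failure of $J$ to preserve the decomposition $D_1 \oplus D_2$), and reducing it to a tractable scalar quantity via \eqref{RJ} and the dimension hypothesis is the main algebraic work of the proof.
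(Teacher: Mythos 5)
There is a genuine gap, and it starts with the premise. You assert that in the proof of Theorem~\ref{thm einstein} ``the only places where Einstein is used are the Bochner identities.'' That is not so: the Einstein hypothesis enters first through Lemma~\ref{commute}, which gives $SJ=JS$, and this commutation is used repeatedly in the computation --- in \eqref{diffJth} (where $\sum_i e_i\wedge JSe_i$ equals $2JS$ only if $S$ and $J$ commute, and equals $JS+SJ$ in general), in \eqref{lieJth} (where $[\theta,J\theta]=-|\theta|^2J\theta$ requires $JS\theta=SJ\theta$), and in \eqref{eq connlap} (where $SJ\theta$ is evaluated via \eqref{Sth} after commuting). Without Einstein, each of these steps produces extra $[S,J]$-terms, so your ``generalized \eqref{summ}'' with correction term $A=J\Ric^g(J\theta)-\Ric^g(\theta)$ is not justified as written. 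Your identity $[S,J]=-\tfrac{1}{2(n-1)}[\Ric^g,J]$ is correct and could in principle be used to track these corrections, but you have not done so, and the resulting equation would contain more terms than the one you display. Beyond that, the proof is not actually completed: the control of $JSJ\theta$ and $\Ric^g(\theta)$ via the block decompositions --- which you yourself identify as ``the main algebraic work'' --- is never carried out, and the definite sign of the integrated expression is only hoped for. Finally, your scheme makes no provision for $n=2$, where the paper needs an entirely different argument (reduction to Theorem~\ref{thm kaehler} via the two K\"ahler structures $I_\pm$ built from the volume forms of the rank-two distributions); any estimate whose sign degenerates at $n=2$ would leave that case open.

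For comparison, the paper's proof for $n\ge3$ does use your consequence (b), $R_{X_1,X_2}=0$, together with \eqref{RJ}, but the decisive inputs are different and more elementary: splitting $\dd\Omega=2\theta\wedge\Omega$ by bidegree relative to $\T M=D_1\oplus D_2$ and using $\dd_c^2=0$ together with the injectivity of $\Omega\wedge\cdot$ on $2$-forms (valid only for $n\ge3$) yields $\nabla_{X_a}\theta_b=-\theta_a(X_a)\theta_b$ for $a\ne b$ (Lemma~\ref{lemm12}); then $R_{X_1,X_2}J=0$ forces $\nabla_{X_a}\theta_a=-\langle X_a,\theta_a\rangle\theta_a+\tfrac1{n_a}\bigl(|\theta_a|^2-\delta\theta_a\bigr)X_a$, and substituting back gives the single scalar identity $\tfrac1{n_1}(|\theta_1|^2-\delta\theta_1)+\tfrac1{n_2}(|\theta_2|^2-\delta\theta_2)=|\theta|^2$, whose integral over $M$ forces $\theta\equiv0$ precisely because $n_1,n_2\ge2$. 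No Bochner formula and no Ricci term ever appear. If you want to salvage your route, you would have to derive the analogues of these two pointwise identities for $\nabla\theta$ from your tensorial constraints; at that point you would essentially be reproducing the paper's argument, with the Bochner machinery adding nothing.
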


\begin{proof}
Since the arguments for $n=2$ and $n\ge3$ are of different nature, we treat the two cases separately. Consider first the case of complex dimension $n=2$. Then both distributions $D_1$ and $D_2$ have rank $2$, and their volume forms $\Omega_1$ and $\Omega_2$ define two K\"ahler structures on $M$ compatible with $g$ by the formula $g(I_\pm\cdot,\cdot)=\Omega_1\pm\Omega_2.$ Using the case $n=2$ in Theorem \ref{thm kaehler} below, we deduce that $J$ commutes with $I_+$ and with $I_-$. In particular, $J$ preserves the $\pm 1$ eigenspaces of $I_+I_-$, which are exactly the distributions $D_1$ and $D_2$. Since $J$ is also orthogonal, its restriction to $D_1$ and $D_2$ coincides up to sign with the restriction of $I_+$ to $D_1$ and $D_2$. Thus $J=\pm I_+$ or $J=\pm I_-$. In each case, the structure $(g,J)$ is K\"ahler, so $\theta\equiv 0$.

We consider now the case $n\geq 3$. Let $\theta=\theta_1+\theta_2$ be the corresponding splitting of the Lee form.
We fix a local orthonormal basis $\{e_i\}_{i=1,\ldots, 2n}$, which is parallel at the point where the computation is done and denote by $e_i^a$ the projection of $e_i$ onto $D_a$, for $a\in\{1,2\}$. 

The exterior differential and $\Omega$ split with respect to the decomposition of the tangent bundle as follows: $\dd=\dd_1+\dd_2$ and $\Omega=\Omega_{11} +2 \Omega_{12}+\Omega_{22}$, where for $a,b\in\{1,2\}$ we define:
\[\dd_a:=\sum_{i=1}^{2n}e_i^a\wedge \nabla_{e_i^a},\quad  
\Omega_{ab}:= \frac{1}{2}\sum_{i=1}^{2n} e_i^a\wedge (Je_i)^b= 
\frac{1}{2}\sum_{i=1}^{2n} e_i^a\wedge (Je_i^a)^b.\] 
The last equality follows for instance by considering a local orthonormal basis of $\T M$, whose 
first $n_1$ vectors are tangent to $D_1$.

\begin{Lemma}\label{lemm12}
With the above notation, for any vector fields $X_1\in D_1$ and $X_2\in D_2$, the following relations hold:
\begin{equation}\label{nabla12}
\nabla_{X_1} \theta_2=-\theta_1(X_1)\theta_2, \quad \nabla_{X_2} \theta_1=-\theta_2(X_2)\theta_1.
\end{equation}
\end{Lemma}

\begin{proof}
Note that $\dd \theta=0$ implies $\dd_a\theta_b+\dd_b\theta_a=0$, for all $a,b\in\{1,2\}$. For $c\in\{1,2\}$ we compute:
\begin{eqnarray*}
\dd_c\Omega_{ab}&\overset{\eqref{nablaJ}}{=}&\frac{1}{2}\sum_{i,j=1}^{2n} e_j^c\wedge e_i^a\wedge\left(\<e_j^c,e_i>J\theta-\<J\theta,e_i>e_j^c+\<J e_j^c,e_i>\theta-\<\theta,e_i>Je_j^c\right) ^b\\
&=&\frac{1}{2}\sum_{i=1}^{2n} \left(e_i^c\wedge e_i^a\wedge J\theta_b-(Je_i)^c\wedge e_i^a\wedge \theta_b-e_i^c\wedge\theta_a\wedge(Je_i^c)^b\right)
\\&=&\Omega_{ac}\wedge\theta_b+\Omega_{cb}\wedge\theta_a,
\end{eqnarray*}
so for all $a,b,c\in\{1,2\}$ we have
\begin{equation}\label{dc}
\dd_c\Omega_{ab}=\Omega_{ac}\wedge\theta_b+\Omega_{cb}\wedge\theta_a.
\end{equation}
Using the fact that $\dd_c^2=0$ for $c\in\{1,2\}$, we obtain
\begin{equation}\label{d2c}
0=\dd_c^2\Omega_{ab}=\Omega_{ac}\wedge(\dd_c\theta_b+\theta_c\wedge\theta_b)+\Omega_{cb}\wedge(\dd_c\theta_a+\theta_c\wedge\theta_a).
\end{equation}
For $c=a\neq b$ in \eqref{d2c}, we get $\Omega_{cc}\wedge(\dd_c\theta_b+\theta_c\wedge\theta_b)=0$ and for $a=b$: $\Omega_{cb}\wedge(\dd_c\theta_b+\theta_c\wedge\theta_b)=0$. Summing up, we obtain that $\Omega\wedge (\dd_c\theta_b+\theta_c\wedge\theta_b)=0$, which by the injectivity of $\Omega\wedge\cdot$ on manifolds of complex dimension greater than $2$, implies that $\dd_c\theta_b=-\theta_c\wedge\theta_b$. Applying this identity for $b\neq c$ to $X_c\in D_c$ and $X_b\in D_b$ yields \eqref{nabla12}.
\end{proof}

The symmetries of the Riemannian curvature tensor imply that $R_{X_1,X_2}=0$, and thus $R_{X_1,X_2} J=[R_{X_1,X_2}, J]=0$, for every $X_1\in D_1$ and $X_2\in D_2$. 

Using \eqref{RJ} for $X:=X_1$ and $Y:=X_2$ and applying Lemma~\ref{lemm12}, we obtain:
\begin{multline}\label{curb3}
0=\<X_1,\theta_1>X_2\wedge J\theta_1 -\<X_2,\theta_2>X_1\wedge J\theta_2
-\<X_2,\theta_2>JX_1\wedge \theta_2 +\<X_1,\theta_1>JX_2\wedge \theta_1 \\
-|\theta|^2X_2\wedge JX_1+|\theta|^2X_1\wedge JX_2
+X_2\wedge J\nabla_{X_1} \theta_1 +JX_2\wedge \nabla_{X_1} \theta_1-X_1\wedge J\nabla_{X_2}\theta_2 
 -JX_1\wedge \nabla_{X_2}\theta_2,
\end{multline}
for every $X_1\in D_1$ and $X_2\in D_2$.

\begin{Lemma}
The following formula holds:
\begin{equation}\label{n11}
\nabla_{X_1}\theta_1=-\<X_1,\theta_1>\theta_1+\frac1{n_1}\left(|\theta_1|^2-{\delta\theta_1}\right)X_1, \quad \forall X_1\in D_1.
\end{equation}
\end{Lemma}
\begin{proof}

Let $\mathcal{U}$ denote the open set $\mathcal{U}:=\{x\in M\, |\,\ (JD_2)_x\not\subset (D_1)_x\}$. By continuity, it is enough to prove the result on the open sets $M\setminus \overline{\mathcal{U}}$ and $\mathcal{U}$.

Let $\mathcal{O}$ be some open subset of $M\setminus \overline{\mathcal{U}}$, \emph{i.e.} at every point $x$ of $\mathcal{O}$ the inclusion $(JD_2)_x\subset (D_1)_x$ holds. On $\mathcal{O}$, let $X$ be some vector field  and $Y_2,Z_2$ vector fields tangent to $D_2$. By assumption, we have $JY_2\in D_1$, hence $\nabla_X JY_2\in D_1$ and $\nabla_XY_2\in D_2$, thus $J\nabla_X Y_2\in D_1$.
Applying \eqref{nablaJ}, we obtain 
\begin{eqnarray*}
0&=&\<\nabla_X JY_2, Z_2>-\<J\nabla_X Y_2,Z_2>=\<(\nabla_XJ)Y_2,Z_2>\\
&=&\<X,Y_2>(J\theta)(Z_2)-\<X,Z_2>(J\theta)(Y_2)-\<X,JY_2>\theta(Z_2)+\<X,JZ_2>\theta(Y_2).
\end{eqnarray*}
Since $n_2\geq 2$, for any $Y_2\in D_2$ there exists a non-zero $Z_2\in D_2$ orthogonal to $Y_2$. Taking $X=JZ_2\in D_1$ in the above formula yields  $\theta(Y_2)=0$. This shows that $\theta_2=0$, so $\theta=\theta_1$. Taking $X=Z_2\in D_2$ in the above formula yields $\theta_1(J Y_2)=0$, for all $Y_2\in D_2$. 
Substituting into \eqref{curb3}, we obtain for all $X_1\in D_1$ and $Y_2\in D_2$:
\begin{multline}\label{eqtr}
\<X_1,\theta_1>Y_2\wedge J\theta_1 +\<X_1,\theta_1>JY_2\wedge \theta_1
-|\theta_1|^2Y_2\wedge JX_1+|\theta_1|^2X_1\wedge JY_2
\\
+Y_2\wedge J\nabla_{X_1} \theta_1 
+JY_2\wedge \nabla_{X_1} \theta_1=0.
\end{multline}

Let us now consider the decomposition $D_1=JD_2\oplus D'_1$, where $D'_1$ denotes the orthogonal complement of $JD_2$ in $D_1$. Note that $D'_1$ is $J$-invariant, since it is also the orthogonal complement in $\T M$ of the $J$-invariant distribution $D_2\oplus JD_2$. Let $X_1=JV_2+V_1$ and $\nabla_{X_1}\theta_1=J W_2+W_1$ be the decomposition of $X_1$, respectively of $\nabla_{X_1}\theta_1$, with respect to this splitting, \emph{i.e.} $V_2, W_2\in D_2$ and $V_1,W_1\in D'_1$. As shown above, $\theta_1$ vanishes on $JD_2$, meaning that $\theta_1\in D'_1$.

Taking the trace with respect to $Y_2$ in \eqref{eqtr} yields 
\begin{equation}
n_2\<X_1,\theta_1>J\theta_1 
+|\theta_1|^2 [(n_2-1)V_2-n_2 JV_1]
+n_2 JW_1-(n_2-1)W_2=0,
\end{equation}
which further implies, by projecting onto $D_2$ and $D'_1$, 
that $W_1=-\<X_1,\theta_1>\theta_1  +|\theta_1|^2 V_1$ and $W_2=|\theta_1|^2 V_2$. 
Hence, $\nabla_{X_1} \theta_1= |\theta_1|^2 X_1-\<X_1,\theta_1>\theta_1$, 
which in particular implies $\delta\theta_1=(1-n_1)|\theta_1|^2$, 
proving \eqref{n11} on $M\setminus \overline{\mathcal{U}}$.

We further show that the formula \eqref{n11} holds on $\mathcal{U}$. At every point  $x$ of $\mathcal{U}$ there exist vectors $X_2, Y_2\in (D_2)_x$ such that $X_2\perp Y_2$ and $\<Y_2,JX_2>\neq 0$. Indeed, by definition there exists $Y_2\in (D_2)_x$ such that $JY_2\notin D_1$, and we can take $X_2$ to be the $D_2$-projection of $JY_2$.

For any vector $X_1\in (D_1)_x$ we take the scalar product with $X_1\wedge Y_2$ in \eqref{curb3} and obtain:
\begin{multline}\label{curb2}
\<JX_2,Y_2>\left(\<\nabla_{X_1} \theta_1,X_1>+|\<X_1,\theta_1>|^2\right)=\\
-|X_1|^2\left(\<X_2,\theta_2>\< J\theta_2,Y_2>
 -|\theta|^2 \<JX_2,Y_2>
 +\< J\nabla_{X_2}\theta_2, Y_2>\right).
\end{multline}
We thus get $\<\nabla_{X_1} \theta_1,X_1>+|\<X_1,\theta_1(x)>|^2=f_1(x)|X_1|^2,$ for every $X_1\in (D_1)_x$, 
where the real number $f_1(x)$ does not depend on $X_1$. By polarization, we obtain:
\begin{equation}\label{nabla11}
\nabla_{X_1}\theta_1=-\<X_1,\theta_1(x)>\theta_1(x)+f_1(x)X_1, \quad \forall X_1\in (D_1)_x.
\end{equation}
Taking the trace with respect to $X_1$ in this formula and using \eqref{nabla12} we obtain $(\delta\theta_1)_x=|\theta_1(x)|^2-n_1f_1(x)$, whence:
\begin{equation}\label{f1}
f_1(x)=\frac1{n_1}\left(|\theta_1|^2-{\delta\theta_1}\right)(x),\qquad\forall x\in \mathcal{U}.
\end{equation}
From \eqref{nabla11} and \eqref{f1} we obtain \eqref{n11} on $\mathcal{U}$. This proves the lemma.
\end{proof}

A similar argument yields 
\begin{equation}\label{n22}
\nabla_{X_2}\theta_2=-\<X_2,\theta_2>\theta_2+\frac1{n_2}\left(|\theta_2|^2-{\delta\theta_2}\right)X_2, \quad \forall X_2\in D_2.
\end{equation}

Substituting \eqref{n11} and \eqref{n22} into \eqref{curb3}, we obtain
\[
\left(\frac1{n_1}\left(|\theta_1|^2-{\delta\theta_1}\right)+\frac1{n_2}\left(|\theta_2|^2-{\delta\theta_2}\right)-|\theta|^2\right)\left(X_2\wedge JX_1-X_1\wedge JX_2\right)=0,\;\forall X_1\in D_1,\  X_2\in D_2.
\]
Note that for every $X_1\in D_1,\  X_2\in D_2$ the two-forms $X_2\wedge JX_1$ and $X_1\wedge JX_2$ are mutually orthogonal. So, choosing $X_1$ non-collinear to $JX_2$ (which is possible as $n_1\ge 2$), the 2-form appearing in the previous formula is non-zero. Hence, we necessarily have
\[
\frac1{n_1}\left(|\theta_1|^2-{\delta\theta_1}\right)+\frac1{n_2}\left(|\theta_2|^2-{\delta\theta_2}\right)-|\theta|^2=0.
\]
Integrating this relation over $M$, we get
\[\int_M |\theta|^2 \dd\mu_g=\frac{1}{n_1}\int_M |\theta_1|^2 \dd\mu_g+\frac{1}{n_2}\int_M |\theta_2|^2 \dd\mu_g.\]
Since $|\theta|^2=|\theta_1|^2+|\theta_2|^2$, we obtain
$\left(1-\frac{1}{n_1}\right)\int_M |\theta_1|^2 \dd\mu_g+\left(1-\frac{1}{n_2}\right)\int_M |\theta_2|^2 \dd\mu_g=0$.
As $n_1,n_2\geq 2$, it follows that $\theta\equiv0$. This concludes the proof of the theorem.
\end{proof}

\begin{Remark} For every $n\ge 2$, the tangent bundle $\T(\CM^n\setminus\{0\})$ endowed with the flat metric $g$ defined in Example \ref{expl} can be written as an orthogonal direct sum of two parallel distributions of ranks at least 2 in infinitely many ways, but the gcK structure $(g,J_0)$ on $\CM^n\setminus\{0\}$ has non-vanishing Lee form $\theta=-2\ln r$.
The compactness assumption in Theorem~\ref{thmred} is thus necessary.
\end{Remark}

It remains to consider the case when one of the two oriented parallel distributions has rank $1$, and is thus spanned by a globally defined parallel unit vector field. This case was studied by the second named author in \cite[Theorem~3.5]{Moroianu2015} for $n\ge3$. We  will give here a simpler proof of his result, which also extends it to the missing case $n=2$.

\begin{Theorem}[{\em cf.} {\cite[Theorem~3.5]{Moroianu2015}}] \label{main am} 
Let $(M,g,J,\theta)$ be a compact proper lcK manifold of complex dimension $n\geq 2$ admitting a non-trivial parallel vector field $V$. Then, the following exclusive possibilities occur:
\begin{enumerate}
\item[(i)] The Lee form $\theta$ is a non-zero constant multiple of $V^\flat$, so $M$ is a Vaisman manifold.
\item[(ii)] The Lee form $\theta$ is exact, so $(M,g,\Omega,\theta)$ is gcK, and there exists a complete simply connected K\"ahler manifold $(N,g_N,\Omega_N)$ of real dimension $2n-2$, a smooth non-constant real function $c:\RM\to\RM$ and a discrete co-compact group $\Gamma$ acting freely and totally discontinuously on $\RM^2\times N$, preserving the metric $\dd s^2+\dd t^2+e^{2c(t)}g_N$, the Hermitian $2$-form $\dd s\wedge\dd t+e^{2c(t)}\Omega_N$ and the vector fields $\partial_s$ and $\partial_t$, such that $M$ is diffeomorphic to $\Gamma\backslash(\RM^2\times N)$, and the structure $(g,\Omega,\theta)$ corresponds to $(\dd s^2+\dd t^2+e^{2c(t)}g_N, \dd s\wedge\dd t+e^{2c(t)}\Omega_N,\dd c)$ through this diffeomorphism.
\end{enumerate}
\end{Theorem}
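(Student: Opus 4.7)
The plan is to exploit the parallelism of $V$ through the identity $R_{X,V}J=0$ to constrain the Lee form pointwise, and then use compactness to produce a sharp dichotomy between the two cases in the statement. After normalising $|V|=1$, set $\eta:=V^\flat$ (parallel, hence harmonic), $W:=JV$, and introduce the scalars $f:=\theta(V)$ and $h:=\theta(W)$. A direct computation from \eqref{nablaJ} and $\nabla V=0$ yields
\[
\nabla_X W=(\nabla_X J)V=\eta(X)J\theta^\sharp + hX - (J\eta)(X)\theta^\sharp - fJX,
\]
and consequently $\dd J\eta = \eta\wedge J\theta + \theta\wedge J\eta - 2f\,\Omega$. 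Since $\theta$ is closed, $\nabla\theta$ is symmetric, so $\dd f = \nabla_V\theta$.

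The first and most delicate step is to prove the pointwise identity $\theta=f\eta+hJ\eta$, i.e.\ that $\theta^\sharp$ lies in the $2$-plane $\mathrm{span}(V,JV)$ everywhere. I would derive this from the fact that $V$ parallel forces $R_{X,V}J=0$ for all $X$: substituting $Y=V$ in \eqref{RJ} produces an endomorphism identity whose projections on suitable vectors collapse, using \eqref{delta Omega} and the symmetry of $\nabla\theta$, to the algebraic constraint $\theta=f\eta+hJ\eta$. With this in hand, closedness of $\theta$ together with the specialisation $\dd J\eta=-2f\,\Omega'$ (where $\Omega'$ is the restriction of $\Omega$ to the orthogonal complement of $\mathrm{span}(V,W)$) yields
\[
\dd f\wedge\eta + \dd h\wedge J\eta - 2fh\,\Omega' = 0.
\]
The $\Omega'$-projection gives $fh\equiv 0$ pointwise; the remaining projections give $\dd f,\dd h\in\mathrm{span}(\eta,J\eta)$ with $\dd f(W)=\dd h(V)$. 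Connectedness of $M$, together with a unique continuation argument ruling out open loci where both $f$ and $h$ vanish simultaneously (at which $\theta=0$, so $(g,J)$ would be K\"ahler locally and a rigidity argument based on the full system would propagate $\theta\equiv 0$ globally, contradicting properness), then forces either $h\equiv 0$ on $M$ (Case A) or $f\equiv 0$ on $M$ (Case B).

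In Case A one has $\theta=f\eta$ with $\dd f=V(f)\eta$; integrating over the compact $M$ and using that $V$ is divergence-free forces $V(f)=0$, so $f$ is a non-zero constant (properness) and $\theta$ is a constant multiple of $V^\flat$, giving Vaisman, which is (i). In Case B, $V(\varphi)=f=0$ on the universal cover $\tilde M$ where $\tilde\theta=\dd\varphi$, so $\cL_V g_K = -2V(\varphi)g_K = 0$ and $V$ is Killing for the K\"ahler metric $g_K:=e^{-2\varphi}\tilde g$; the commuting vector fields $V$ and $\mathrm{grad}_{\tilde g}\varphi$ generate a free $\RM^2$-action on the complete simply connected $(\tilde M, g_K)$, and a de Rham-type splitting produces the product $\RM^2\times N$ with the warped metric $\dd s^2+\dd t^2+e^{2c(t)}g_N$ after a change of parameter $\varphi\mapsto c(t)$. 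The co-compact $\Gamma$-action preserving $\partial_s,\partial_t$ descends from the $\pi_1(M)$-action on $\tilde M$, which preserves $V$ (parallel) and the level sets of $\varphi$; exactness of $\theta=\dd c(t)$ on $M$ then gives the gcK conclusion, completing (ii). The main obstacle will be the pointwise reduction $\theta=f\eta+hJ\eta$, where the parallelism of $V$ must be combined with the full lcK structure equations in a non-trivial way; secondary delicate points are the unique continuation argument at the locus $\{f=h=0\}$ and the precise realisation of the warped-product presentation together with the co-compactness of $\Gamma$ in Case B.
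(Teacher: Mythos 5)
Your overall skeleton (decompose $\theta$ along $V$, $JV$ and their orthogonal complement, kill the orthogonal part, then split into the Vaisman and gcK cases) matches the paper's, but the central step is not carried out correctly, and the way you propose to do it cannot work. You claim that the pointwise identity $R_{X,V}J=0$, together with \eqref{delta Omega} and the symmetry of $\nabla\theta$, collapses \emph{algebraically} to $\theta=f\eta+hJ\eta$. This is false: all of the identities you invoke are local, and the flat gcK structure of Example \ref{expl} on $\CM^n\setminus\{0\}$ (with $g=r^{-4}g_0$, $\theta=-2\dd\ln r$, and $V$ any of its parallel vector fields) satisfies $R\equiv 0$, hence $R_{X,V}J=0$, while its Lee form is radial and at generic points does not lie in $\mathrm{span}(V^\flat,JV^\flat)$. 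The vanishing of $\theta_0$ is genuinely global: the paper contracts \eqref{RJ} to \eqref{RJcontr}, evaluates at $X=V$, pairs with $JV$, combines with the expression \eqref{delth} for $\delta\theta$ to get the divergence identity $(2n-2)(V(a)-|\theta_0|^2)=\delta\theta_0$, and only then integrates over the compact $M$ (using $\int_M V(a)=0$ since $V$ is parallel) to conclude $\int_M|\theta_0|^2=0$. Without this integration step your proof does not get off the ground.

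There are two further gaps downstream. First, your global dichotomy ``$h\equiv 0$ or $f\equiv 0$'' is obtained by a vague unique-continuation argument on the locus $\{f=h=0\}$; note that in case (ii) this locus is non-empty (it is $\{c'(t)=0\}$), so ruling it out is not the right mechanism. The paper instead proves that $a:=f$ is \emph{constant}, by computing $\dd JV=2a(V\wedge JV-\Omega)$, using $\dd\theta=0$ to get $ab=0$ and $\dd a\wedge V+\dd b\wedge JV=0$, and then applying $\dd$ once more to $\dd JV$ to force $\dd a=0$; constancy of $a$ immediately yields the clean dichotomy. Second, in your Case A you argue that $\int_M V(f)\,\dd\mu_g=0$ ``forces $V(f)=0$''; the vanishing of an integral does not give pointwise vanishing of the integrand, and again the correct statement is that $f$ is constant by the argument above. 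The final reconstruction in case (ii) via the explicit formula for $\nabla JV$ and the warped-product splitting is consistent with the paper (which delegates it to Lemmas 3.3 and 3.4 of \cite{Moroianu2015}), but as it stands the proposal does not constitute a proof.
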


\begin{proof}
Let $V$ be a parallel vector field of unit length on $M$. We identify as usual 1-forms with vectors using the metric $g$ and decompose the Lee form as $\theta=aV+bJV+\theta_0$, where $a:=\<\theta, V>$, $b:=\<\theta, JV>$  and $\theta_0$ is orthogonal onto $V$ and $JV$. We compute:
\begin{equation}\label{deltatheta}
\delta\theta=-V(a)-JV(b)+b\delta JV+\delta\theta_0.
\end{equation}
On the other hand, we have:
\begin{eqnarray*}\delta JV&=&-\sum_{i=1}^{2n}\<(\nabla_{e_i} J)V, e_i>\overset{\eqref{nablaJ}}{=}\sum_{i=1}^{2n} -\<(e_i\wedge J\theta+J e_i\wedge \theta)(V), e_i>\\
&=&(2-2n)\<\theta, JV>=(2-2n)b,\end{eqnarray*}
which together with \eqref{deltatheta} yields
\begin{equation}\label{delth}
\delta\theta=-V(a)-JV(b)+(2-2n)b^2+\delta\theta_0.
\end{equation}

Replacing $X$ by $V$ in \eqref{RJcontr} and using that $V$ is parallel, we obtain:
\begin{equation*}
(2n-3)\left(a J\theta-|\theta|^2JV+ J\nabla_{V} \theta\right)-b\theta -\nabla_{JV}\theta-JV\delta\theta=0.
\end{equation*}
Taking the scalar product with $JV$ yields
\begin{equation}\label{eq delth}
(2n-3)\left(a^2-|\theta|^2+ \<\nabla_{V} \theta,V> \right)-b^2   -\<\nabla_{JV}\theta,JV>-\delta\theta=0.
\end{equation}
Further, we compute
$$\<\nabla_V\theta, V>=V(\<\theta, V>)=V(a),$$
$$\<\nabla_{JV}\theta, JV>=JV(b)-\<\theta, (\nabla_{JV} J)V>\overset{\eqref{nablaJ}}{=} JV(b)-\<\theta, bJV+aV-\theta>=JV(b)+|\theta_0|^2,$$
which together with \eqref{delth} and \eqref{eq delth} imply that 
\begin{equation*}
(2n-2)(V(a)-|\theta_0|^2)=\delta\theta_0.
\end{equation*}
Integrating over $M$, we obtain $\int_M |\theta_0|^2\dd\mu_g=0$, because  
$\int_M V(a) \dd\mu_g= \int_M a\delta V \dd\mu_g=0$, as $V$ is parallel. Hence, $\theta_0=0$, showing that $\theta=aV+bJV$.

{\bf Claim.} The function $a$ is constant and $ab=0$.

{\it Proof of the Claim.}  Equation \eqref{nablaJ} yields
\begin{equation}\label{nablaJV}
\nabla_X JV= \<X, V>(-bV+aJV)+bX-\<X, JV>(aV+bJV)-aJX,
\end{equation}
which allows us to compute the exterior differential of $JV$, as follows:
\begin{equation}\label{ddJV}
\dd JV= 2a(V\wedge JV-\Omega).
\end{equation}
From the fact that $\theta$ is closed and $V$ is parallel, we obtain
$$0=\dd\theta=\dd a\wedge V+\dd b\wedge JV+b\, \dd JV=\dd a\wedge V+\dd b\wedge JV+2ab(V\wedge JV-\Omega),$$
which implies that $ab=0$, for instance, by taking the scalar product with $X\wedge JX$ for some vector field $X$ orthogonal to $V$ and $JV$. In particular, we have
\begin{equation}\label{eq da}
\dd a\wedge V+\dd b\wedge JV=0.
\end{equation}
Differentiating again \eqref{ddJV} yields
\begin{equation*}
0=\dd a\wedge (V\wedge JV-\Omega)+a(-V\wedge \dd JV -\dd \Omega)=\dd a\wedge (V\wedge JV-\Omega)-2abJV\wedge\Omega=\dd a\wedge (V\wedge JV-\Omega),
\end{equation*}
which together with \eqref{eq da} shows that $\dd a=0$, thus proving the claim.

If $a$ is non-zero, the second part of the claim shows that $b\equiv0$, so $\theta=aV$ is parallel and $(M,g,J, \theta)$ is Vaisman.

If $a=0$, Equation \eqref{nablaJV} becomes:
\[\nabla_X JV= b\left(X-\<X, V>V-\<X, JV>JV\right).\]
We conclude that in this case the metric structure on $M$ is given as in (ii) by applying Lemma~3.3 and Lemma~3.4 in \cite{Moroianu2015}.
\end{proof}

\begin{Corollary}\label{cor-hol}
Let $(M,g,J,\theta)$ be a compact proper lcK manifold of complex dimension $n\ge2$. If $(M,g)$ has reducible holonomy, then its restricted holonomy group $\Hol_0(M,g)$ is conjugate to $\SO(2n-1)$.
\end{Corollary}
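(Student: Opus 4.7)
My strategy is to combine Theorem~\ref{thmred} with Theorem~\ref{main am}: the former rigidly constrains how $TM$ can split into parallel pieces, and the latter identifies the resulting lcK structure on the nose.

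By Lemma~\ref{hol0}, after passing to a finite covering I may assume that $\Hol(M,g)$ itself is reducible; since the pulled-back Lee form is still non-zero, the covering remains proper lcK. Reducibility provides an orthogonal splitting $TM = D_1 \oplus D_2$ into parallel subbundles of ranks $n_1, n_2$ with $n_1 + n_2 = 2n$. If both ranks are $\geq 2$, Theorem~\ref{thmred} forces $\theta \equiv 0$, contradicting properness; hence one summand has rank $1$, say $n_1 = 1$ and $n_2 = 2n-1$. The same regrouping trick shows that $D_2$ cannot split further into parallel subbundles: any nontrivial decomposition $D_2 = D_2' \oplus D_2''$ can be regrouped with $D_1$ to produce two parallel distributions each of rank $\geq 2$ (using $n \geq 2$), and Theorem~\ref{thmred} applies once more. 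Consequently $TM = L \oplus E$, with $L$ of rank $1$, $E$ of rank $2n-1$, both parallel, and $E$ holonomy-irreducible.

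The rank-$1$ summand $L$ is thus spanned by a parallel unit vector field $V$, which places us in the setting of Theorem~\ref{main am}: the lcK structure is either Vaisman (with $V$ collinear to $\theta$) or the gcK warped-product model of Theorem~\ref{main am}(ii). In either case, $\Hol(M,g)$ fixes $V$ and acts irreducibly on $E \cong \RM^{2n-1}$, so up to conjugation in $\SO(2n)$ we have $\Hol(M,g) \subseteq \SO(2n-1)$.

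The final step --- which I expect to be the main obstacle --- is upgrading this containment to an equality. Combining the irreducibility of the action of $\Hol(M,g)$ on the odd-dimensional space $E$ with the Berger--Simons theorem restricts the possibilities to $\SO(2n-1)$, locally symmetric holonomies, and the exceptional group $\G_2$ (only when $2n-1 = 7$). Each exceptional possibility can be ruled out using the explicit local model of the metric provided by Theorem~\ref{main am}: in the warped-product case, the non-constancy of $c$ together with a direct computation of the curvature tensor of $\dd s^2 + \dd t^2 + e^{2c(t)} g_N$ produces curvature endomorphisms spanning all of $\so(2n-1)$; in the Vaisman case, the non-parallelism of $J$ (which follows from $\theta \neq 0$ via \eqref{nablaJ}) is incompatible with any parallel $\G_2$- or symmetric-space structure on $E$. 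This yields the desired identification $\Hol_0(M,g) \simeq \SO(2n-1)$.
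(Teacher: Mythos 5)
Your reduction to the statement that $\Hol_0(M,g)$ is contained in $\SO(2n-1)$ and acts irreducibly on the rank-$(2n-1)$ parallel complement of $V$ is correct and is exactly the paper's route (Lemma~\ref{hol0}, Theorem~\ref{thmred}, Theorem~\ref{main am}, plus the regrouping trick to exclude further parallel splittings). You also correctly list the remaining alternatives from Berger--Simons for an irreducible odd-dimensional holonomy: $\SO(2n-1)$, an irreducible symmetric holonomy, or $\G_2$ when $2n-1=7$. The problem is how you dispose of the last two. In the warped-product case your claim that the curvature of $\dd t^2+e^{2c(t)}g_N$ produces endomorphisms spanning $\so(2n-1)$ is a reasonable idea (the O'Neill formula gives $R^S_{X,\partial_t}\partial_t=-(\ddot f/f)X$ with $f=e^c$, and $\ddot f\not\equiv 0$ since $f$ is positive, periodic and non-constant, so Ambrose--Singer together with the fact that $\{X\wedge\partial_t\}$ generates $\so(2n-1)$ as a Lie algebra would close this case), but you assert it without any computation; the paper instead argues via Ricci curvature, using $\Ric^S(\partial_t,\partial_t)=(1-2n)\ddot f/f$ to force $\ddot f\equiv0$ in the Ricci-flat ($\G_2$) case and $f=\sin(\mu t+\nu)$ in the Einstein (symmetric) case, both impossible for a positive non-constant periodic $f$.

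The Vaisman case is where the proposal genuinely fails. The assertion that ``the non-parallelism of $J$ is incompatible with any parallel $\G_2$- or symmetric-space structure on $E$'' is not an argument: nothing you have written connects $\nabla J\neq0$ on $M$ to the holonomy of the transverse factor $S$ in the local splitting $\RM\times S$. What is actually needed --- and what the paper extracts from \eqref{RJ} after normalizing $|\theta|=1$ --- is the Sasaki-type curvature identity $R^S_{X,Y}\xi=g_S(Y,\xi)X-g_S(X,\xi)Y$ for $\xi$ the metric dual of $J\theta$. Taking a trace gives $\Ric^S(\xi)=(2n-2)\xi\neq0$, which eliminates the Ricci-flat group $\G_2$ (alternatively, Ricci-flatness of $S$ would make $M$ Ricci-flat and Theorem~\ref{thm einstein} would force $\theta\equiv0$); and in the locally symmetric case, differentiating this identity and using $\nabla_Z\xi=-JZ$ on $\xi^\perp$ shows that $S$ has constant sectional curvature $1$, hence full holonomy $\SO(2n-1)$ after all. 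Without some curvature input of this kind, your Vaisman case remains unproved.
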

\begin{proof}
By Lemma \ref{hol0}, Theorem \ref{thmred} and Theorem \ref{main am}, we need to distinguish two cases: 

{\bf Case 1.} If $(M,g,J,\theta)$ is Vaisman. Then the Lee form $\theta$ is parallel (and non-vanishing), so $(M,g)$ is locally isometric to $\RM\times S$ for some Riemannian manifold $(S,g_S)$. It is well known that $S$ is a Sasakian manifold, but since we want to avoid introducing this class of manifolds, we will derive the necessary formulas directly. 

As $\theta$ is parallel, we can rescale the metric of $M$ such that $|\theta|=1$. Equation \eqref{RJ} applied to vector fields $X,Y$ tangent to $S$ (i.e. orthogonal to $\theta$) then yields:
$$R_{X,Y}J=X\wedge JY-Y\wedge JX,\qquad\forall X,Y\in\ker(\theta).$$
In particular, applying this formula to $\theta$ (seen as vector field) and using the fact that $R_{X,Y}\theta=0$ gives
$$R_{X,Y}(J\theta)=(R_{X,Y}J)(\theta)=\<Y,J\theta>X-\<X,J\theta>Y,\qquad\forall X,Y\in\ker(\theta).$$
The metric dual $\xi$ of $J\theta$ is parallel in the direction of $\theta$, so it is actually a vector field on $S$, and the previous relation reads
\begin{equation}\label{rs}R^S_{X,Y}\xi=g_S(Y,\xi)X-g_S(X,\xi)Y,\qquad\forall X,Y\in\T S,
\end{equation}
where $R^S$ is the Riemannian curvature tensor of $(S,g_S)$. 

Assume, for a contradiction, that $\Hol_0(M,g)$ is strictly contained in $\SO(2n-1)$. Then the same holds for $\Hol_0(S,g_S)$, so by the Berger-Simons theorem, we have three possibilities: 
\begin{itemize}
\item $(S,g_S)$ has reducible holonomy; this would contradict Theorem \ref{thmred} since then $(M,g)$ would have a holonomy reduction with both factors of dimension at least 2.
\item $\Hol_0(S,g_S)$ belongs to the Berger list; the unique group in this list corresponding to an odd-dimensional manifold is $\G_2$ (for $2n-1=7$). However, a manifold with holonomy $\G_2$ is Ricci-flat, whereas $\Ric^S(\xi)=(2n-2)\xi$ by taking a trace in \eqref{rs}. This case is thus impossible too.
\item $(S,g_S)$ is locally symmetric. Then $R^S$ is parallel, so by taking a further covariant derivative in \eqref{rs} we get 
\begin{equation}\label{rsz}R^S_{X,Y}(\nabla_Z\xi)=g_S(Y,\nabla_Z\xi)X-g_S(X,\nabla_Z\xi)Y,\qquad\forall X,Y,Z\in\T S.
\end{equation}
On the other hand, from \eqref{nablaJ} we see that $\nabla_Z\xi=-JZ$ when $Z$ is orthogonal to $\theta$ and $J\theta$, so the set $\{\nabla_Z\xi\ |\ Z\in\T S\}$ is equal to the orthogonal of $\xi$ in $\T S$. From \eqref{rs} and \eqref{rsz} we thus obtain that $S$ has constant sectional curvature 1, i.e. it is locally isometric to the round sphere, and has maximal holonomy group $\Hol_0(S,g_S)=\SO(2n-1)$, which contradicts our assumption.
\end{itemize}

{\bf Case 2.} The universal covering of $(M,g)$ is isometric to a Riemannian product $\RM\times S$, where $S = \RM\times N$ has a warped product metric $g_S=\dd t^2 + e^{2c(t)}g_N$ with periodic, but non-constant, warping function $c$. Denoting for convenience $f(t):=e^{c(t)}$, one of the O'Neill formulas for the curvature of warped products (cf. \cite[p. 210]{Oneill1983}) reads:
\begin{equation}\label{rs3}R^S_{X,\partial_t}\partial_t=-\frac{\ddot f}f X,\qquad\forall X\in C^\infty(\T N).
\end{equation}

Assume now that $\Hol_0(M,g)=\Hol(S,g_S)$ is strictly contained in $\SO(2n-1)$. Like before, Theorem \ref{thmred} shows that $(S,g_S)$ has irreducible holonomy. 

Next, if $\Hol(S,g_S)$ belongs to the Berger list, then $S$ is a $\G_2$-manifold since it has odd dimension, and therefore is Ricci-flat. On the other hand, taking the trace in  \eqref{rs3} immediately shows that 
\begin{equation}\label{rs4}\Ric^S(\partial_t,\partial_t)=(1-2n)\frac{\ddot f}f.
\end{equation} 
Thus $\Ric^S=0$ implies $\ddot f=0$, which is impossible since $f$ is a non-constant periodic function.

It remains to treat the case where $(S,g_S)$ is an irreducible symmetric space. In particular $S$ is Einstein with Einstein constant $\lambda$ and from \eqref{rs4} we get $\ddot f=\frac{\lambda}{1-2n}f$. As $f$ is non-constant and periodic, we necessarily have $\lambda> 0$ and 
\begin{equation}\label{rs5}f(t)=\sin(\mu t+\nu)
\end{equation} 
for some real constants $\mu$ and $\nu$ with $\mu^2=\frac{\lambda}{2n-1}.$ This is a contradiction, since the periodic function $f=e^c$ does not vanish at any point of $\RM$. This shows that $\Hol_0(M,g)$ is conjugate to $\SO(2n-1)$, and thus finishes the proof.
\end{proof}

\subsection{The irreducible locally symmetric case}

In this section we show the following result:

\begin{Proposition}\label{prop sym}
Every compact irreducible locally symmetric lcK manifold $(M^{2n},g,J,\theta)$ has vanishing Lee form. 
\end{Proposition}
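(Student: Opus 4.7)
Since the restricted holonomy acts irreducibly and $g$ is locally symmetric, $g$ is automatically Einstein; let $\lambda\in\RM$ denote its Einstein constant. When $\lambda\le 0$, Theorem \ref{thm einstein} applies directly and yields $\theta\equiv 0$, so the problem reduces to ruling out the case $\lambda>0$.

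Assume therefore $\lambda>0$. Myers' theorem then gives that $\pi_1(M)$ is finite, and Remark \ref{cov} forces the lcK structure to be globally conformally K\"ahler: there exists $\varphi\in C^\infty(M)$ with $\theta=\dd\varphi$, and $g_K:=e^{-2\varphi}g$ is K\"ahler with respect to $J$. By the classification part of Theorem \ref{p1}, $(M,g_K,J)$ is either a blow-up of $\CP^2$ at one, two or three points in general position (if $n=2$), or one of the conformally-Einstein K\"ahler manifolds of B\'erard-Bergery (if $n\ge 3$). It will suffice to show that the Einstein metric $g$ in each of these examples cannot be irreducible and locally symmetric.

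For $n=2$ the argument is topological: by Cartan's classification, a compact $4$-dimensional irreducible locally symmetric Riemannian manifold of positive scalar curvature is, up to finite covering, either $\SM^4$ or $\CP^2$, with Euler characteristics $2$ and $3$; on the other hand, the blow-up of $\CP^2$ at $k$ points has Euler characteristic $3+k$ for $k=1,2,3$, so none of the $n=2$ alternatives can be homotopy equivalent to a compact irreducible locally symmetric space. For $n\ge 3$, the B\'erard-Bergery Einstein metric is a cohomogeneity-one metric on the total space of a $\CP^1$-bundle over a Fano K\"ahler-Einstein base, with non-constant warping profile; its sectional curvatures vary non-trivially along the bundle fibre, so $\nabla R$ is not identically zero and the metric cannot be locally symmetric.

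The principal obstacle is making this last claim rigorous: one must exclude every B\'erard-Bergery Einstein metric of complex dimension $n\ge 3$ from being locally symmetric. This can be carried out either by a direct curvature computation in the explicit Ansatz underlying the B\'erard-Bergery construction (checking that $\nabla R\not\equiv 0$), or, more conceptually, by comparing with Cartan's list of compact irreducible symmetric spaces and ruling out the specific $\CP^1$-bundle topology of the B\'erard-Bergery examples. Either route relies in an essential way on the explicit structural information provided by the Derdzinski-Maschler analysis encoded in Theorem \ref{p1}.
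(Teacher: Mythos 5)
Your reduction to the positive scalar curvature case is exactly the paper's: irreducible locally symmetric implies Einstein, the case $\lambda\le 0$ is killed by Theorem \ref{thm einstein}, and $\lambda>0$ forces gcK via Myers and Remark \ref{cov}. From that point on, however, you take a different route, and it has a genuine gap. The paper does \emph{not} invoke the classification of conformally-Einstein K\"ahler metrics at all: it takes an arbitrary Killing field $X$ of $g$, observes that $X$ is conformal Killing for the K\"ahler metric $g_K=e^{-2\f}g$, applies the Lichnerowicz--Tashiro theorem (on a compact K\"ahler manifold every conformal Killing field is Killing) to conclude $X(\f)=0$, and then uses homogeneity of the locally symmetric space to force $\f$ to be constant. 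This is short, uniform in $n$, and needs no structural information about the B\'erard-Bergery examples.

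The gap in your argument is the one you flag yourself: for $n\ge 3$ you never actually prove that a B\'erard-Bergery Einstein metric cannot be irreducible locally symmetric. The assertion that ``its sectional curvatures vary non-trivially along the bundle fibre, so $\nabla R$ is not identically zero'' is precisely the statement to be established, and it is not a formality — it would require either an explicit computation of $\nabla R$ in the cohomogeneity-one Ansatz or a careful comparison with Cartan's list of compact irreducible symmetric spaces in every complex dimension $n\ge 3$, checking that none of them carries the relevant $\CP^1$-bundle structure together with a non-K\"ahler Hermitian structure whose Lee form is non-trivial. Neither is carried out, so as written the proof is incomplete for $n\ge3$. (Your $n=2$ Euler characteristic argument is fine: the blow-ups of $\CM P^2$ at $1$, $2$ or $3$ points are simply connected with $\chi=4,5,6$, while the compact simply connected irreducible symmetric $4$-manifolds of positive curvature are $\SM^4$ and $\CM P^2$ with $\chi=2,3$.) If you want to salvage your approach, the cleanest fix is to replace the case analysis entirely by the Killing field argument above; otherwise you must supply the missing curvature computation for the B\'erard-Bergery family.
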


\begin{proof}
An irreducible locally symmetric space is Einstein. If the scalar curvature of $M$ is non-positive, the result follows directly from Theorem \ref{thm einstein}.

Assume now that $M$ has positive scalar curvature. By Myers' Theorem and Remark~\ref{cov}, $(M,g,J)$ is gcK, so $\theta=\dd\varphi$ for some function $\f$, and $g_K:=e^{-2\varphi}g$ is a K\"ahler metric. 
Let $X$ be a Killing vector field of $g$. Then $X$ is a conformal Killing vector field of the metric $g_K$. By a result of Lichnerowicz \cite{Lichnerowicz1957} and Tashiro \cite{Tashiro1962}, every conformal Killing vector field with respect to a K\"ahler metric on a compact manifold is Killing. This shows that $X$ is a Killing vector field for both conformal metrics $g$ and $g_K$, hence $X$ preserves the conformal factor, \emph{i.e.} $X(\varphi)=0$. As $(M,g)$ is homogeneous and $X(\varphi)=0$ for each Killing vector field $X$ of $g$, it follows that the function $\varphi$ is constant. Thus $\theta=\dd\f=0$.
\end{proof}

\subsection{Compact irreducible lcK manifolds with special holonomy}\label{sec hol}

We finally consider compact lcK manifolds $(M,g,J,\theta)$ 
of complex dimension $n\ge2$, whose restricted holonomy group $\Hol_0(M,g)$ is in the Berger list. The following cases occur:

If $\Hol_0(M,g)$ is one of $\SU(n)$, $\Sp(n/2)$, or $\Spin(7)$ (for $n=4$), the metric $g$ is Ricci-flat and $\theta\equiv0$ by Theorem \ref{thm einstein}.

If $\Hol_0(M,g)=\Sp(n/2)\Sp(1)$, the metric $g$ is quaternion-K\"ahler, hence Einstein with either positive or negative scalar curvature. 
In the negative case one has  $\theta\equiv0$ by Theorem \ref{thm einstein}.
On the other hand, P.~Gauduchon, A.~Moroianu and U.~Semmelmann, have shown in \cite{GMS2011}, that the only compact quaternion-K\"ahler manifolds of positive scalar curvature which carry an almost complex structure are the complex Grassmanians of 2-planes, which are symmetric, thus again $\theta\equiv0$ by Proposition \ref{prop sym}. 

The case $\Hol_0(M,g)=\U(n)$ is more involved and will be treated in the next two sections.

\section{K\"ahler structures on lcK manifolds}\label{s3}

We now consider the case left open in the previous section, namely compact proper lcK manifolds $(M,g,J,\theta)$ whose restricted holonomy group is equal to $\U(n)$. We will see that there are examples of such structures, but they cannot be strictly lcK. In particular, the Riemannian metric $g$ of a compact strictly lcK manifold $(M,g,J,\theta)$ cannot be K\"ahler with respect to any complex structure on $M$.

The universal covering $(\widetilde M,\tilde g)$ has holonomy $\Hol(\widetilde M,\tilde g)=\U(n)$, so $\tilde g$ is K\"ahler with respect to some complex structure $\tilde I$. Every deck transformation $\gamma$ of $\widetilde M$ is an isometry of $\tilde g$, so $\gamma^*\tilde I$ is parallel with respect to the Levi-Civita connection of $\tilde g$. As $\Hol(\widetilde M,\tilde g)=\U(n)$, we necessarily have $\gamma^*\tilde I=\pm \tilde I$ for every $\gamma\in\pi_1(M)\subset \mathrm{Iso}(\widetilde M)$. The group of $\tilde I$-holomorphic deck transformations is thus a subgroup of index at most 2 of $\pi_1(M)$, showing that after replacing $M$ with some double covering if necessary, there exists an integrable complex structure $I$, such that $(M,g,I)$ is a K\"ahler manifold.  



\begin{Theorem}\label{thm kaehler}
Let $(M,g,J,\theta)$ be a compact proper lcK manifold of complex dimension $n\ge2$ carrying a complex structure $I$, such that $(M,g,I)$ is a K\"ahler manifold. Then $I$ commutes with $J$ and $(M,g,J,\theta)$ is globally conformally K\"ahler. 
\end{Theorem}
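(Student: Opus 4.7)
The theorem has two claims, $[I,J]=0$ and exactness of $\theta$, which I would establish in sequence. The commutation is the technical heart of the proof and is where compactness of $M$ enters essentially.

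For the commutation, the natural measure of failure is the tensor $N := IJ - JI \in \Gamma(\End TM)$, which is $g$-skew-symmetric, or equivalently the $I$-anti-invariant part $J^{-} := \tfrac12(J + IJI)$ of $J$ (one has $N = 0 \Leftrightarrow J^{-} = 0$). Using $\nabla I = 0$ and \eqref{nablaJ},
\[
\nabla_X N \;=\; [I,\,\nabla_X J] \;=\; [I,\,X\wedge J\theta] + [I,\,JX\wedge\theta],
\]
an explicit expression in $X$, $I$, $J$, and $\theta$. I would then apply a Weitzenb\"ock--Bochner identity to the scalar $|N|^2$. The K\"ahler condition $\nabla I = 0$ gives $[R_{X,Y},I] = 0$; combined with the contracted identity \eqref{RJcontr}, which expresses the key curvature contraction of $R$ with $J$ in terms of $\Ric$, $\theta$, $\nabla\theta$ and $\delta\theta$, together with the fact that $\Ric$ commutes with $I$ (as $I$ is K\"ahler), the curvature terms in the Bochner formula for $N$ reorganize cleanly. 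The resulting pointwise identity, of schematic form
\[
\tfrac12\Delta|N|^2 \;=\; |\nabla N|^2 + Q(N,\theta,\nabla\theta),
\]
can be integrated over the compact $M$ so that the $Q$-contribution disappears via Stokes, forcing $\nabla N \equiv 0$ and hence $N \equiv 0$. Compactness is essential here: Example~\ref{expl} shows that no such conclusion is possible without integrating over a compact $M$.

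Once $[I,J] = 0$, the endomorphism $A := IJ = JI$ is a $g$-symmetric involution ($A^2 = \id$) commuting with both $I$ and $J$. It decomposes $TM$ orthogonally as $V_+ \oplus V_-$ into its $\pm 1$-eigenspaces, which are both $I$- and $J$-invariant, with $J|_{V_\pm} = \mp I|_{V_\pm}$. Setting $\omega_\pm := \Omega_I|_{V_\pm}$ (extended by zero to $M$), one has $\Omega_I = \omega_+ + \omega_-$ (closed) and $\Omega = -\omega_+ + \omega_-$; the lcK equation $d\Omega = 2\theta\wedge\Omega$ then yields
\[
d\omega_- \;=\; \theta\wedge\Omega, \qquad d\omega_+ \;=\; -\theta\wedge\Omega.
\]
From this system, the K\"ahler structure on $(M,g,I)$, and compactness, $\theta$ can be written as the differential of a smooth function --- for instance by extracting a suitable $\Omega_I$-trace of $\omega_\pm$ and interpreting it as the logarithmic derivative of a positive function on $M$. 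When $n\ge 3$, the same formulas additionally imply that $A=IJ$ preserves $d\varphi$ up to sign, yielding the extra statement used later in the proof of Theorem~\ref{p2}.

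The main obstacle is the commutation step: engineering the Weitzenb\"ock identity so that its integrated form has definite sign. The K\"ahler condition on $I$ is used decisively, both through $[R_{X,Y},I]=0$ and through $[\Ric,I]=0$, to arrange the cancellations that make the compactness-based integration yield $N\equiv 0$.
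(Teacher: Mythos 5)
Your two-step architecture (first $[I,J]=0$ by an integral argument exploiting the K\"ahler curvature symmetry, then exactness of $\theta$ from the eigenbundle decomposition of $IJ$) matches the paper's, but the decisive step is only schematized, and the schematic is not what the computation actually yields. The Bochner identity $\tfrac12\Delta|N|^2=\langle\nabla^*\nabla N,N\rangle-|\nabla N|^2$ integrates to the tautology $\int\langle\nabla^*\nabla N,N\rangle=\int|\nabla N|^2$; to get content one must compute the curvature contraction explicitly, and what comes out (the paper contracts $R_{X,Y}J=R_{IX,IY}J$ twice to reach \eqref{eqn theta}) is an identity of the form $(\hbox{zeroth-order quadratic in }\theta,I,J)=(\hbox{divergence})$ whose left-hand side is \emph{not} $|\nabla N|^2$ and is \emph{not} automatically non-negative: its non-negativity requires the pointwise estimates \eqref{i1}--\eqref{i3}, and the middle one, $-(4n-12)\langle IJ\theta,JI\theta\rangle\ge-(4n-12)|\theta|^2$, has the wrong sign when $n=2$. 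The paper must therefore handle $n=2$ by an entirely different quaternionic argument (showing $I$ and $J$ induce opposite orientations), and exactness for $n=2$ by Buchdahl--Lamari plus Vaisman's theorem rather than by Hodge theory. Your uniform treatment hides exactly this dichotomy, which is where the work lies. Two further gaps: even if you obtained $\nabla N\equiv0$, that only makes $N$ parallel, not zero, so an additional argument is needed; and the conclusion you actually need from the integral identity is stronger than $[I,J]=0$ --- the equality cases must give $I\theta=\pm J\theta$ and $\tr(IJ)=\pm(2n-4)$, i.e.\ that the $(-1)$-eigenbundle $V_-$ of $IJ$ has rank exactly $2$ and is spanned by $\theta$ and $I\theta$ on the set where $\theta\ne0$.

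That rank information is precisely what your exactness step is missing. From $\Omega^I=\omega_++\omega_-$ and $\Omega^J=-\omega_++\omega_-$ you correctly derive $\dd\omega_\pm=\mp\theta\wedge\Omega^J$, but to proceed one needs $\theta\wedge\omega_-=0$, i.e.\ $\omega_-=\frac1{|\theta|^2}\theta\wedge I\theta$, which is exactly the rank-$2$ statement above; only then does $\theta\wedge\Omega^J=-\theta\wedge\Omega^I$ hold, giving $\dd\Omega^J=-2\,\Omega^I\wedge\theta$, after which the harmonic part of $\theta$ is killed because the Lefschetz operator of the K\"ahler manifold $(M,g,I)$ preserves harmonic forms and is injective on $1$-forms. ``Extracting an $\Omega^I$-trace of $\omega_\pm$'' cannot replace this: that trace is the locally constant function $\tfrac12\rk V_\pm$, and declaring $\theta$ to be a logarithmic derivative is circular. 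In short, the plan points in the right direction, but the key inequalities, their equality cases, and the separate $n=2$ treatment are all absent, and these constitute the substance of the proof.
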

\begin{proof}
The Riemannian curvature tensor of $(M,g)$ satisfies $R_{X,Y}=R_{IX,IY}$, so in particular we have $R_{X,Y} J=R_{IX,IY} J$, for all vector fields $X$ and $Y$. Using \eqref{RJ}, this identity implies that
\begin{multline*}
\<X,\theta>Y\wedge J\theta -\<Y,\theta>X\wedge J\theta
-\<Y,\theta>JX\wedge \theta +\<X,\theta>JY\wedge \theta -|\theta|^2Y\wedge JX+|\theta|^2X\wedge JY\\
+Y\wedge J\nabla_{X} \theta+JY\wedge \nabla_{X} \theta -X\wedge J\nabla_{Y}\theta 
 -JX\wedge \nabla_{Y}\theta\\
=\<IX,\theta>IY\wedge J\theta -\<IY,\theta>IX\wedge J\theta
-\<IY,\theta>JIX\wedge \theta +\<IX,\theta>JIY\wedge \theta\\ -|\theta|^2IY\wedge JIX+|\theta|^2IX\wedge JIY
+IY\wedge J\nabla_{IX} \theta +JIY\wedge \nabla_{IX} \theta-IX\wedge J\nabla_{IY}\theta 
 -JIX\wedge \nabla_{IY}\theta,
\end{multline*}
for all vector fields $X,Y$. Let  $\{e_i\}_{i=1,\ldots,2n}$ be a local orthonormal basis of $\T M$, which is parallel at the point where the computation is done. Taking the interior product with $X$ in the above identity and summing over $X=e_i$, we obtain:
\begin{multline}\label{contr1}
(4-2n)\<Y,\theta> J\theta  
+\<JY,\theta>\theta 
+(2n-4)|\theta|^2 JY +(4-2n) J\nabla_{Y}\theta +\nabla_{JY} \theta+\delta\theta JY\\
=\<IY,\theta>IJ\theta-\<IJ\theta,\theta>IY-\<IY,\theta>\tr(JI)\theta+\<IY,\theta>JI\theta
+\<\theta, IJIY> \theta\\
+|\theta|^2\tr(IJ)IY -|\theta|^2IJIY
-\sum_{i=1}^{2n}\<e_i,J\nabla_{Ie_i}\theta>IY+IJ\nabla_{IY}\theta+\nabla_{IJIY} \theta
-\tr(JI)\nabla_{IY}\theta+JI \nabla_{IY}\theta.
\end{multline}
Substituting  $Y=e_j$ in \eqref{contr1}, taking the scalar product with $Je_j$ and summing over $j=1,\ldots,2n$ yields:
\begin{multline}\label{contr2}
(4n^2-10n+6)|\theta|^2+(4n-6)\delta\theta=
-2\tr(IJ)\<I\theta,J\theta>-
2\<IJ\theta,JI\theta>\\
-(\tr (IJ))^2|\theta|^2+\tr(IJIJ)|\theta|^2
+2\tr(IJ)\sum_{i=1}^{2n}\<IJ\nabla_{e_i}\theta,e_i>
-2\sum_{i=1}^{2n}\<IJIJ\nabla_{e_i}\theta,e_i>.
\end{multline}

By a straightforward computation, using \eqref{nablaJ} and the fact that $\nabla\theta$ is a symmetric endomorphism, we have the following identities:
$$
\tr(IJ)\sum_{i=1}^{2n}\<e_i,IJ\nabla_{e_i}\theta>=
-\delta(\tr(IJ)JI\theta)
+(2n-2)\tr(IJ)\<I\theta,J\theta>
-2\<IJ\theta,JI\theta>+2|\theta|^2,$$
$$
\sum_{i=1}^{2n}\<e_i,IJIJ\nabla_{e_i}\theta>=
-\delta(JIJI\theta)
-(2n-3)\<IJ\theta,JI\theta>
+\tr(IJ)\<I\theta,J\theta>
+|\theta|^2.
$$
Substituting these in \eqref{contr2}, we obtain
\begin{multline}\label{eqn theta}
(4n^2-10n+4)|\theta|^2-(4n-8)\tr(IJ)\<I\theta,J\theta>
-(4n-12)\<IJ\theta,JI\theta>\\
-\tr(IJIJ)|\theta|^2+(\tr(IJ))^2|\theta|^2=-(4n-6)\delta \theta-2\delta(\tr(IJ)JI\theta)
+2\delta(JIJI\theta).
\end{multline}

In order to exploit this formula we need to distinguish two cases.

{\bf Case 1:} If $n=2$, \eqref{eqn theta} becomes:
\begin{equation}\label{eqn theta2}
4\<IJ\theta,JI\theta>-\tr(IJIJ)|\theta|^2+(\tr(IJ))^2|\theta|^2=-2\delta( \theta+\tr(IJ)JI\theta
-JIJI\theta).
\end{equation}

We claim that $I$ and $J$ define opposite orientations on $\T M$.
Assume for a contradiction that they define the same orientation, and recall that complex structures compatible with the orientation on an oriented $4$-dimensional Euclidean vector space may be identified with imaginary quaternions of norm $1$ acting on $\HM$  by left multiplication.  For any $q,v\in \mathbb{H}$, we have: $\<q v, v>=\frac{1}{4}\tr(q)|v|^2$, where $\tr(q)$ denotes the trace of $q$ acting by left multiplication on $\HM$. 
For every $x\in M$ we can identify $\T _xM$ with $\HM$ and view $I,J$ as unit quaternions acting by left multiplication. The previous relation gives the following pointwise equality: $4\<IJIJ\theta,\theta>=\tr(IJIJ)|\theta|^2$. Substituting in \eqref{eqn theta2} and integrating over $M$, implies that $\tr(IJ)=0$, so $I$ and $J$ anti-commute.  Equation \eqref{eqn theta2} then further implies that $\delta\theta=0$. Replacing these two last equalities in \eqref{contr2} yields $|\theta|^2=0$. This contradicts the assumption that the lcK structure $(g,J,\theta)$ is proper. Hence, $I$ and $J$ define opposite orientations and thus they commute.

Since $(M,g,I)$ is a compact K\"ahler manifold, it follows that its first Betti number is even. N.~Buchdahl~\cite{Buchdahl1999} and A.~Lamari~\cite{Lamari1999} proved that each compact complex surface with even first Betti number carries a K\"ahler metric. On the other hand, I.~Vaisman proved that if a complex manifold $(M,J)$ admits a $J$-compatible K\"ahler metric, then every lcK metric on $(M,J)$ is gcK \cite[Theorem 2.1]{Vaisman1980}. This shows that $\theta$ is exact.

{\bf Case 2.} We assume from now on that $n\geq 3$.
The integral over the compact manifold $M$ of the left hand side of \eqref{eqn theta} is zero, since the right hand side is the co-differential of a $1$-form. On the other hand, the following inequalities hold:
\begin{equation}\label{i1}
-(4n-8)\tr(IJ)\<I\theta,J\theta>\geq -(4n-8)|\tr(IJ)||\theta|^2\geq -\left((\tr (IJ))^2+(2n-4)^2\right)|\theta|^2,
\end{equation}
\begin{equation}\label{i2}
-(4n-12)\<IJ\theta,JI\theta>\geq-(4n-12) |\theta|^2,
\end{equation}
(it is here that the assumption $n\ge 3$ is needed), and
\begin{equation}\label{i3}
-\tr(IJIJ)|\theta|^2\geq -2n|\theta|^2.
\end{equation}

Summing up the inequalities \eqref{i1}--\eqref{i3} shows that the left hand side of \eqref{eqn theta} is non-negative. As the right hand side of  \eqref{eqn theta} is a divergence, we deduce that both terms vanish identically, and thus equality holds in \eqref{i1}--\eqref{i3}. 

Let $M'$ denote the set of points where $\theta$ is not zero and let $M''$ denote the interior of $M\setminus M'$. The open set $M'\cup M''$ is clearly dense in $M$. At each point of $M'$, the fact that equality holds in \eqref{i3} shows that $(IJ)^2=\mathrm{Id}$. Moreover, the endomorphism $(IJ)^2$ is $\nabla$-parallel along $M''$ (since $\theta=0$ along $M''$, so $(M'',g,J)$ is K\"ahler). We deduce that $(IJ)^2$ is $\nabla$-parallel along $M'\cup M''$, thus along the whole of $M$ by density. Moreover $M'$ is not empty (since by assumption the lcK structure $(g,J,\theta)$ is proper). As $(IJ)^2=\mathrm{Id}$ on $M'$, we finally get $(IJ)^2=\mathrm{Id}$ on $M$, which amounts saying that $I$ and $J$ commute at each point of $M$.

Moreover, the fact that equality holds in \eqref{i1} shows that for each point $x\in M'$ there exists $\e_x\in\{-1,1\}$ with $I\theta=\e_x J\theta$ and $\tr (IJ)=\e_x(2n-4)$. The function $\tr (IJ)$ is thus locally constant on $M'$ and on $M''$ (since as before $IJ$ is parallel along $M''$), so by density, it is constant on $M$. 
After replacing $I$ with $-I$ if necessary we may thus assume that $\tr(IJ)=2n-4$, and $I\theta=J\theta$ on $M$ (this last relation holds tautologically on $M\setminus M'$).

This shows that that the orthogonal involution $IJ$ has two eigenvalues: 1 with multiplicity  $2n-2$ and $-1$ with multiplicity $2$. 
At each point of $M'$, since $\theta$ and $I\theta$ are eigenvectors of $IJ$ for the eigenvalue $-1$, it follows that $IJX=X$, for every $X$ orthogonal on $\theta$ and $J\theta$, which can also be expressed by the formula
\begin{equation}\label{J}
JX=-IX+\frac2{|\theta^2|}\left(\<X,\theta>I\theta-\<X,I\theta>\theta\right), \qquad\forall X\in \T M'. 
\end{equation}
We thus have $\Omega^J=-\Omega^I+\frac2{|\theta^2|}\theta\wedge I\theta$ on $M'$. In particular, we have \begin{equation}\label{to}\theta\wedge\Omega^J=-\theta\wedge\Omega^I,
\end{equation}
at every point of $M$ (as this relation holds tautologically on $M\setminus M'$, where by definition $\theta=0$).
From \eqref{dOmega} and \eqref{to} we get
\begin{equation}\label{dOmegaJ}\dd\Omega^J=2\theta\wedge\Omega^J=-2\theta\wedge\Omega^I=-2{\mathrm L}^I(\theta),
\end{equation}
where ${\mathrm L}^I: \Lambda^* M\to \Lambda^* M$, ${\mathrm L}^I(\alpha):=\Omega^I\wedge\alpha$ is the Lefschetz operator of the K\"ahler manifold $(M,g,I)$. 

Using the Hodge decomposition on $M$, we decompose the closed $1$-form $\theta$ as $\theta=\theta_{H}+\dd\varphi$,
where $\theta_H$ is the harmonic part of $\theta$ and $\varphi$ is a smooth real-valued function on $M$. From \eqref{dOmegaJ} and the fact that ${\mathrm L}^I$ commutes with the exterior differential, we obtain 
\begin{equation}\label{L}{\mathrm L}^I(\theta_H)=-\dd\left(\frac12\Omega^J+{\mathrm L}^I\varphi\right).
\end{equation}
Moreover, since ${\mathrm L}^I$ commutes on any K\"ahler manifold with the Laplace operator (see \emph{e.g.} \cite{Moroianu2007B}), the left-hand side of \eqref{L} is a harmonic form and the right-hand side is exact. This implies that ${\mathrm L}^I\theta_H$ vanishes, so $\theta_H=0$ since ${\mathrm L}^I$ is injective on 1-forms for $n\geq 2$. Thus $\theta=\dd\varphi$ is exact, so $(M,g,J,\theta)$ is globally conformally K\"ahler.
\end{proof}

\begin{Example}
As in Example~\ref{expl}, we consider on $M:=\mathbb{C}^n\setminus\{0\}$ the standard flat structure $(g_0,J_0)$. Let $J$ be a constant complex structure on $M$, compatible with $g_0$ and which does not commute with $J_0$. Then, $(M, g:=r^{-4}g_0, J)$ is gcK and  
$(M,g, I)$ is K\"ahler, where $I$ is the pull-back of $J_0$ through the inversion, but $J$ and $I$ do not commute. This example shows that the compactness assumption in Theorem~\ref{thm kaehler} is necessary.
\end{Example}

\section{Conformal classes with non-homothetic K\"ahler metrics}\label{skahler}

As an application of Theorem \ref{thm kaehler}, we will describe in this section all compact conformal manifolds $(M^{2n},c)$ with $n\ge2$, such that the conformal class $c$ contains two non-homothetic K\"ahler metrics.

We start by constructing a class of examples, which will be referred to as the {\em Calabi Ansatz}.

\begin{Proposition}\label{prop}
Let $(N,h,J_N,\Omega_N)$ be a Hodge manifold, {\em i.e.} a compact K\"ahler manifold with $[\Omega_N]\in H^2(N,2\pi\ZM)$. 
Let $\pi:S\to N$ be the principal $S^1$-bundle with the connection (given by Chern-Weil theory) 
whose curvature form is the pull-back to $S$ of $i\Omega_N$. For any positive real number $\ell$, 
let $h_\ell$ be the unique Riemannian metric on $S$ such that $\pi$ is a Riemannian submersion 
with fibers of length $2\pi \ell$. For every $b>0$ and smooth function 
$\ell:(0,b)\to \RM^{>0}$, consider the metric $g_{\ell}:=h_{\ell(r)}+\dd r^2$ on 
$M':=S\times(0,b)$. Then the metric $g_{\ell}$ is globally conformally K\"ahler with respect to two non-conjugate
complex structures $J_+$, $J_-$ on $M'$. Moreover, if $\ell^2(r)=r^2(1+A(r^2))$ near $r= 0$ 
and $\ell^2(r)=(b-r)^2(1+B((b-r)^2))$ near $r=b$ for smooth functions $A,B$ 
defined near $0$ with $A(0)=B(0)=0$, then the metric completion $M$ of $(M',g_\ell)$ is a smooth manifold diffeomorphic to the total space of an $S^2$-bundle over $N$, and $g_{\ell}$, $J_+$, and $J_-$ extend smoothly to $M$.
\end{Proposition}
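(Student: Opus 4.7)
The plan is to construct and verify everything explicitly on the open manifold $M' = S\times(0,b)$, and then address the smooth extension across the two endpoints $r=0$ and $r=b$. Let $\xi$ denote the infinitesimal generator of the $S^1$-action on $S$, and let $\eta$ be the real connection $1$-form with $\eta(\xi)=1$ and $d\eta = \pi^*\Omega_N$; let $\mathcal{H}\subset TS$ be the horizontal distribution. I would define the almost complex structures $J_\pm$ on $M'$ by requiring that they act as the horizontal lift of $J_N$ on $\mathcal{H}$ and satisfy $J_\pm\partial_r = \pm\ell(r)^{-1}\xi$. Both are $g_\ell$-compatible, with fundamental $2$-forms
\[
\omega_\pm = \pi^*\Omega_N \pm \ell(r)\, dr\wedge\eta.
\]
They coincide on $\mathcal{H}$ but differ in sign on the vertical $2$-plane $\langle\partial_r,\xi\rangle$, so they are neither equal nor conjugate (since $J_+=-J_-$ would force $J_N=-J_N$ on $\mathcal{H}$).

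For integrability of $J_\pm$ I would invoke the Newlander--Nirenberg theorem. The $(1,0)$-forms for $J_+$ are generated by horizontal lifts of $(1,0)$-forms on $N$ together with $dr + i\ell\eta$; since $\pi^*\Omega_N$ is of type $(1,1)$ (as $J_+|_\mathcal{H}$ lifts $J_N$) and $dr\wedge\eta = \tfrac{i}{2\ell}(dr+i\ell\eta)\wedge(dr-i\ell\eta)$ is also $(1,1)$, both summands of $d(dr+i\ell\eta) = i\ell'\,dr\wedge\eta + i\ell\,\pi^*\Omega_N$ are of type $(1,1)$; hence $J_+$ is integrable, and so is $J_-$ by the same reasoning. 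To establish the gcK condition, a direct computation using $d\eta = \pi^*\Omega_N$ yields $d\omega_\pm = \mp\ell(r)\,dr\wedge\pi^*\Omega_N$; matching this with $d\omega_\pm = 2\theta_\pm\wedge\omega_\pm$ gives $\theta_\pm = \mp\tfrac{\ell(r)}{2}\,dr = d\phi_\pm$, where $\phi_\pm(r) := \mp\tfrac12\int_{r_0}^r\ell(s)\,ds$. Therefore $e^{-2\phi_\pm}g_\ell$ is K\"ahler with respect to $J_\pm$, proving that $(g_\ell, J_\pm, \theta_\pm)$ are gcK structures on $M'$.

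The main obstacle, and the genuine content of the statement, is the smooth extension across the endpoints. The boundary conditions on $\ell^2$ are exactly those needed so that the warped-product piece $\ell(r)^2\eta^2 + dr^2$ extends, up to a perturbation smooth in $r^2$, to the flat metric on a $2$-disk in polar coordinates: collapsing the $S^1$-fiber at $r=0$ yields a smooth disk bundle over $N$, similarly at $r=b$, and gluing these two disk bundles along their common boundary $S$ exhibits the completion $M$ as the total space of an $S^2$-bundle over $N$. The conformal factors $e^{-2\phi_\pm}$ are smooth in $r$ up to the endpoints since $\ell$ is. To check that $J_+$ extends across the core $\{r=0\}$, I would introduce Cartesian coordinates $(x,y)$ on the normal $2$-disk, so that $\partial_r = (x\partial_x+y\partial_y)/r$ and $\xi = -y\partial_x+x\partial_y$, giving $J_+(x\partial_x+y\partial_y) = (r/\ell)(-y\partial_x+x\partial_y)$ and $J_+(-y\partial_x+x\partial_y) = -(\ell/r)(x\partial_x+y\partial_y)$; solving linearly for $J_+\partial_x$ and $J_+\partial_y$, the off-diagonal coefficients involve the factor $A(r^2)/r^2$, which---thanks to $A(0)=0$---is a smooth function of $r^2 = x^2+y^2$, so $J_+$ extends smoothly with value the standard complex structure on $\CM$ at the center of each fiber disk. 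The same analysis applies at $r=b$ and to $J_-$, completing the construction.
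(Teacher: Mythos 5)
Your proposal is correct and follows essentially the same route as the paper: the same definition of $J_\pm$ (up to sign conventions), the same computation of the Lee forms $\theta_\pm=\mp\tfrac{\ell}{2}\,\dd r$, and the same polar-to-Cartesian change of coordinates, using that $\ell/r$ and $A(r^2)/r^2$ are smooth functions of $r^2$, to extend everything across $r=0$ and $r=b$. The only cosmetic difference is that you establish integrability and the lcK condition separately, via Newlander--Nirenberg on $(1,0)$-forms and the identity $\dd\omega_\pm=2\theta_\pm\wedge\omega_\pm$, whereas the paper computes the Levi-Civita connection of $g_\ell$ and verifies the single covariant-derivative identity $\nabla^\ell_Z J_\e=Z\wedge J_\e\theta_\e+J_\e Z\wedge\theta_\e$, which encodes both facts at once.
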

\begin{proof}
Let $i\omega\in\Omega^1(S,i\RM)$ denote the connection form on $S$ satisfying 
\begin{equation}\label{do}\dd\omega=\pi^*(\Omega_N).
\end{equation}
The metric $h_\ell$ is defined by $h_\ell:=\pi^*h+\ell^2\omega\otimes\omega.$
Let $\xi$ denote the vector field on $S$ induced by the $S^1$-action. By definition $\xi$ verifies $\pi_*\xi=0$ and $\omega(\xi)=1$. Let $X^*$ denote the horizontal lift of a vector field $X$ on $N$ (defined by $\omega(X^*)=0$ and $\pi_*(X^*)=X$). By the equivariance of the connection we have $[\xi,X^*]=0$ for every vector field $X$, and from \eqref{do} we readily obtain $[X^*,Y^*]=[X,Y]^*-\Omega_N(X,Y)\xi$. The Koszul formula immediately gives the covariant derivative $\nabla^\ell$ of the metric $g_\ell:=h_{\ell(r)}+\dd r^2$ on $M':=S\times(0,b)$:
\begin{eqnarray*}
\nabla^\ell_\xi\dr&=&\nabla^\ell_\dr\xi=\frac{\ell'}{\ell}\xi,\\
\nabla^\ell_\xi\xi&=&-\ell\ell'\dr,\\
\nabla^\ell_\dr\dr&=&\nabla^\ell_{X^*}\dr=\nabla^\ell_\dr X^*=0,\\
\nabla^\ell_{X^*}\xi&=&\nabla^\ell_\xi X^*=\frac{\ell^2}2(J_NX)^*,\\
\nabla^\ell_{X^*}Y^*&=&(\nabla^h_XY)^*-\frac12\Omega_N(X,Y)\xi.
\end{eqnarray*}
We now define for $\e=\pm1$ the Hermitian structures $J_\e$ on $(M',g_\ell)$ by 
\[J_\e(X^*):=\e (J_NX)^*,\qquad J_\e(\xi):=\ell\dr,\qquad J_\e(\dr):=- \ell^{-1}\xi.\]
A straightforward calculation using the previous formulas yields $\nabla^\ell_Z J_\e=Z\wedge J_\e\theta_\e+J_\e Z\wedge\theta_\e$ for every vector field $Z$ on $M$, where $\theta_\e:=\frac12\e \ell\dd r$. Thus $(g_\ell,J_\e)$ are globally conformally K\"ahler structures on $M'$ with Lee forms
\[\theta_\e=\e\dd\f,\qquad\hbox{where}\qquad \f(r):=\frac12\int_0^r\ell(t)\dd t.\]

The last statement of the proposition follows from a coordinate change (from polar to Euclidean coordinates) in the fibers $S^1\times (0,b)$ of the Riemannian submersion $M'\to N$. 
Indeed, in a neighbourhood of $r=0$, with Euclidean 
coodinates $x_1:=r\cos t$ and $x_2:=r\sin t$, we have:
\begin{equation*}
\begin{pmatrix}
 \partial_r\\ \frac{1}{r}\xi
\end{pmatrix}=
\begin{pmatrix}
\phantom{-}  \cos t  & \sin t \\ 
-\sin t & \cos t 
\end{pmatrix}
\begin{pmatrix}
 \partial_{x_1}\\ 
\partial_{x_2}
\end{pmatrix},
\end{equation*}
where $\xi=\partial_t$. In these coordinates, 
we have the following formulas for the complex structures and the metric:
\begin{gather*}
J_\e(\partial_{x_1})=\frac{\ell}{r}\left(
\frac{-A(r^2)}{\ell^2}x_1x_2\partial_{x_1}-
\left(\frac{A(r^2)}{\ell^2}x_1^2+1\right)\partial_{x_2}\right),\\
J_\e(\partial_{x_2})=\frac{\ell}{r}\left(
\frac{A(r^2)}{\ell^2}x_1x_2\partial_{x_2}+\left(1-
\frac{A(r^2)}{\ell^2}x_2^2\right)\partial_{x_1}\right),\\
g=\pi^*h+\left(1+\frac{A(r^2)}{r^2}x_2^2\right)\dd x_1^2+
\left(1+\frac{A(r^2)}{r^2}x_1^2\right)\dd x_2^2-2\frac{A(r^2)}{r^2}x_1x_2\dd x_1\dd x_2.
\end{gather*}
From the assumption on $A$, the functions $\frac{\ell}r$ and $\frac{A(r^2)}{\ell^2}$ extend smoothly at $r=0$, therefore, the complex structures $J_-$, $J_+$ and the metric $g$ extend smoothly at $r=0$. 
The same argument applies to the other extremal point $r=b$.
Hence, the metric $g_\ell$ on $M'$ extends to a smooth metric $g_0$ on $M$, and there exist two distinct K\"ahler structures on $M$ in the conformal class $[g_0]$, whose restrictions to $M'$ are equal to $(g_+:=e^{\f}g_\ell,J_+)$ and $(g_-:=e^{-\f}g_\ell,J_-)$ respectively.
\end{proof}

Conversely, the Calabi Ansatz can be characterized geometrically by the following data:

\begin{Proposition}\label{thmconv}
Let  $(M,g_0,I)$ be a compact globally conformally K\"ahler manifold with non-trivial Lee form $\theta_0=d\varphi_0$ and denote by $\nabla^0$ the Levi-Civita connection of $g_0$. We assume that  on $M'$, the set where $\theta_0$ is not vanishing, its derivative is given by:
\begin{equation} \label{der0theta}
\nabla^0_X  \theta_0=f\left(\theta_0(X)\theta_0+I\theta_0(X)I\theta_0\right), \quad \forall X\in \T M',
\end{equation}
for some function $f\in C^{\infty}(M')$. We denote by $\xi$ the metric dual of $I\theta_0$ with respect to $g_0$ and further assume that there exists a distribution $\mathcal{V}$ on $M$, such that $\mathcal{V}_x$ is spanned by $\xi$ and $I\xi$, for every $x\in M'$. Then
$(M,g_0)$ is obtained from the Calabi Ansatz.
\end{Proposition}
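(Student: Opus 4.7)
The strategy is to extract the Calabi Ansatz data step by step: a foliation of $M$ by level hypersurfaces of $\varphi_0$, an $S^1$-action along $\xi$, and a Hodge quotient $N$.

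Set $T:=\theta_0^\sharp$ and $\xi:=(I\theta_0)^\sharp=IT$, so that on $M'$ the distribution $\cV$ equals $\mathrm{span}(T,\xi)$ and is $I$-invariant. Pairing \eqref{der0theta} with $\theta_0$ gives $\dd|\theta_0|^2=2f|\theta_0|^2\theta_0$, hence $|\theta_0|^2$ and $f$ depend only on $\varphi_0$. Combining \eqref{der0theta} with \eqref{nablaJ} applied to $I$, and expanding $\nabla^0_X\xi=(\nabla^0_X I)T+I\nabla^0_X T$, a direct calculation yields
\[
\nabla^0_X\xi=(1+f)\bigl(\theta_0(X)\xi-I\theta_0(X)T\bigr)-|\theta_0|^2\,IX,
\]
whose pairing with $Y$ is antisymmetric in $X,Y$, so $\xi$ is a Killing field; an analogous check gives $\cL_\xi I=0$, and clearly $\xi(\varphi_0)=0$. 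Consequently the hypersurfaces $S_c:=\{\varphi_0=c\}$ foliate $M'$, are preserved by the flow of $\xi$, and the orthogonal splitting $\T M=\cV\oplus\cH$ with $\cH:=\cV^\perp$ is $\xi$-invariant and $I$-invariant. A short bracket computation gives $\langle[X,Y],T\rangle=0$ and $\langle[X,Y],\xi\rangle=2|\theta_0|^2\langle IX,Y\rangle$ for $X,Y\in\cH$.

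The main step, and the one where I expect the real work, is to promote the flow of $\xi$ to a free $S^1$-action on $S_c$, so that $N:=S_c/S^1$ becomes a smooth manifold. Since $M$ is compact and $\xi$ is Killing, the closure of its flow in $\mathrm{Iso}(M,g_0)$ is a compact torus $\mathbb{T}^k$; since $\mathbb{T}^k$ preserves $\varphi_0$ and moves points only along $\cV$, which has rank $2$ on $M'$, the generic orbit is one-dimensional, and dividing by the ineffective subgroup yields an isometric $S^1$-action. The hypothesis that $\cV$ extends smoothly across $\{\theta_0=0\}$ is essential here, as it forces the fixed-point set of this $S^1$-action to coincide with the zero locus of $\xi$, on whose normal $2$-plane $\cV$ the circle rotates. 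Once the action is in place, $\pi\colon S_c\to N$ is a principal $S^1$-bundle; $\cH$ defines a connection whose curvature, read off from the bracket formula above, is the pull-back of a closed real $(1,1)$-form $\Omega_N$ on $N$; and the restriction of $(g_0,I)$ to $\cH$ descends to a K\"ahler metric $(h,J_N)$ on $N$ with K\"ahler form $\Omega_N$. The normalisation of the $S^1$-action forces $[\Omega_N]\in H^2(N,2\pi\ZM)$, making $N$ a Hodge manifold.

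Finally, reparametrise $\varphi_0$ by $r$ with $\dd r=\theta_0/|\theta_0|$ and set $\ell(r):=|\xi|(r)=|\theta_0|(r)$; on $M'$ the metric then takes the form $g_0=\dd r^2+h_{\ell(r)}$ in the notation of Proposition \ref{prop}. Smoothness of $(g_0,I,\cV)$ across the fixed set of the $S^1$-action translates, via the polar-to-Euclidean coordinate change carried out at the end of the proof of Proposition \ref{prop}, into the required asymptotics $\ell^2(r)=r^2(1+A(r^2))$ near $r=0$ and $\ell^2(r)=(b-r)^2(1+B((b-r)^2))$ near $r=b$, completing the identification with the Calabi Ansatz.
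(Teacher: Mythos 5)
Your outline follows the same overall strategy as the paper (show $\xi$ is Killing, produce an $S^1$-action with quotient a Hodge manifold $N$, and reassemble the metric as $\dd r^2+\pi^*h+\ell^2\omega\otimes\omega$), and your preliminary formulas for $\nabla^0\xi$ agree with the paper's. But there is a genuine gap at the crux: the promotion of the flow of $\xi$ to an $S^1$-action. You argue that the closure $\mathbb{T}^k$ of the flow in $\mathrm{Iso}(M,g_0)$ has one-dimensional generic orbits because the flow "moves points only along $\cV$", which has rank $2$. This does not follow: the orbits of $\Phi_s$ are indeed tangent to the line field $\cV\cap\ker\theta_0$, but the closure of a curve tangent to a line field can perfectly well be higher-dimensional -- an irrational translation flow on a flat torus is tangent to a parallel line field, yet every orbit closure is a $2$-torus and $k=2$. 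Nothing in your argument excludes this; in particular the elements of $\mathrm{Lie}(\mathbb{T}^k)$ other than $\xi$ are Killing fields that merely commute with $\xi$ and preserve $\cV$, and there is no reason for their values to lie in $\cV$. The paper closes exactly this gap by working at a point $p$ of the zero set $N$ of $\xi$: there $\Phi_s$ fixes $p$, its differential fixes $\T_pN$ pointwise (because $\dd\xi^\flat$ has rank at most $2$ at $N$, with image in $\cV_p$), hence $(\dd\Phi_s)_p$ is a rotation of angle $ks$ in the $2$-plane $\cV_p$; therefore $\Phi_{2\pi/k}$ has identity differential at a fixed point and is the identity, so the flow is periodic. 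You have all the ingredients for this (you even observe that $\cV$ is the normal $2$-plane of the fixed set on which "the circle rotates"), but you invoke them only after assuming the $S^1$-action exists, whereas they are what proves it exists.

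Two further points are asserted rather than proved and require real work in the paper. First, you set $\ell(r):=|\xi|(r)$ and write $g_0=\dd r^2+h_{\ell(r)}$, but this presupposes that $|\theta_0|$ is globally a function of $r$ alone and, more seriously, that the horizontal part of $g_0$ restricted to a level set $S_c$ is independent of $c$ and descends to one fixed metric $h$ on $N$; the paper proves this by showing that the norm of a horizontal vector transported by the flow of $\zeta=I\xi/|I\xi|$ is constant (the Claim in the proof) together with a Gauss-lemma argument showing $|\xi_{\gamma(t)}|$ is independent of the starting point in $N$. Second, the statement that $M\setminus M'$ consists of exactly two critical submanifolds $N$ and $N'$ (needed to get the interval $(0,b)$ and the boundary asymptotics at both ends) is obtained in the paper from the reflection identity $(\dd\Phi_{s_0/2})_p|_{\cV_p}=-\mathrm{Id}$ and the resulting closing-up of normal geodesics; your proposal does not address why $\varphi_0$ has no other critical points.
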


\begin{proof}
We first notice that $M'\neq M$. Indeed, $\theta_0$ vanishes at the extrema of the function $\varphi_0$ defined on the compact manifold $M$.

From \eqref{der0theta} and \eqref{nablaJ}  we deduce the following formulas on $M'$:
\begin{equation} \label{der0Jxi}
\nabla^0_X (I\xi)=-f(\<X,I\xi>I\xi+\<X,\xi>\xi), \quad \forall X\in \T M',
\end{equation}
\begin{equation}\label{der0xi}
\nabla^0_X\xi=(1+f)(\<X,\xi>I\xi-\<X,I\xi>\xi)-|\xi|^2IX,\quad \forall X\in \T M',
\end{equation}
which imply that the distribution $\mathcal{V}$ is totally geodesic along $M'$.

Equation \eqref{der0xi}  also shows that $\nabla^0\xi$ is a skew-symmetric endomorphism, hence $\xi$ is a Killing vector field on $(M',g_0)$. Since $\xi$ is tautologically Killing on the interior of $M\setminus M'$, it is Killing on the whole of $M$ by density.
We denote by $N$ one of the connected components of the zero set of $\xi$, which is thus a compact totally geodesic submanifold of $M$. Applying \eqref{der0xi} at a sequence of points of $M'$ converging to some point of $N$, we see that $\dd\xi^\flat$ has rank at most $2$ at each point of $N$. Moreover $\xi$ is not identically $0$, thus showing that $N$ has co-dimension $2$, and its normal bundle equals $\mathcal{V}|_N$.

Let $\Phi_s$ denote the 1-parameter group of isometries of $(M,g_0)$ induced by $\xi$ and let us fix some $p\in N$. For every $s\in\RM$, the differential of $\Phi_s$ at $p$ is an isometry of $\T_pM$ which fixes $\T_pN$, so it is determined by a rotation of angle $k(s)$ in $\cV_p$. From $\Phi_s\circ\Phi_{s'}=\Phi_{s+s'}$ we obtain $k(s)=ks$, for some $k\in\RM^*$. For $s_0=2\pi/k$, the isometry $\Phi_{s_0}$ fixes $p$ and its differential at $p$ is the identity. We obtain that $\Phi_{s_0}=\mathrm{Id}_M$, so $\xi$ has closed orbits. Note that any $p\in N$ is a fixed point of $\Phi_{s}$, for all $s\in\mathbb{R}$, and that $\Phi_{\frac{s_0}{2}}$ is an orientation preserving isometry whose differential at $p$ squares to the identity, and is the identity on $\T_pN=\mathcal{V}_p^\perp$. Hence,  ${(\dd \Phi_{\frac{s_0}{2}})_p}|_{\mathcal{V}_p}$ is either plus or minus the identity of ${\mathcal{V}_p}$. The first possibility would contradict the definition of $s_0$, so we have
\begin{equation} \label{rot}
(\dd \Phi_{\frac{s_0}{2}})_p|_{\mathcal{V}_p}=-\mathrm{Id}_{\mathcal{V}_p}.
\end{equation}

Let $\gamma$ be a geodesic of $(M,g_0)$ starting from $p$, such that $V:=\dot\gamma(0)\in\cV_p$ and $|\dot\gamma(0)|=1$. Since $\cV$ is totally geodesic, $\dot\gamma(t)\in\cV$ for all $t$. The function $g_0(\xi,\dot\gamma)$ clearly vanishes at $t=0$ and its derivative along $\gamma$ equals $g_0(\nabla^0_{\dot\gamma}\xi,\dot\gamma)=0$, so $g(\xi,\dot\gamma)\equiv0$ along $\gamma$. We thus have 
\begin{equation}\label{fc}I\xi_{\gamma(t)}=c_{p,V}(t)\dot\gamma(t),
\end{equation}
for some function $c_{p,V}:\RM\to\RM$. Clearly $c_{p,V}^2(t)=|\xi_{\gamma(t)}|^2$, so $c_{p,V}$ is smooth at all points $t$ with $\gamma(t)\in M'$.  By \eqref{der0Jxi}--\eqref{der0xi} we easily check that $[\xi,I\xi]=0$ on $M'$, and thus on $M$ by density. Hence, each isometry $\Phi_s$ preserves $I\xi$. Moreover, $\Phi_s(\gamma(t))$ is the geodesic starting at $p$ with tangent vector $(\Phi_s)_*(\dot\gamma(0))$. This shows that the function $c_p:=c_{p,V}$ does not depend on the unit vector $V$ in $\cV_p$ defining $\gamma$. 

We claim that in fact, for all $p,q\in N$,  $c_p(t)=c_q(t)$, for all $t$. In other words, the norm of $\xi_{\gamma(t)}$ only depends on $t$ and not on the initial data of $\gamma$ starting in $N$. For a fixed $t\in\mathbb{R}$, we consider the map $F \colon SN \to M$, $F (V):=\exp(tV)$, where $SN$ denotes the unit normal bundle of $N$. By the Gauss' Lemma, we know that $\dd F_V(T_V SN)\subset ({\dot\gamma_{p,V}}(t))^{\perp}$, where $\gamma_{p,V}$ denotes the geodesic starting at $p$ with unit speed vector $V$. Since $\xi$ is Killing, the function $g_0(\dot\gamma_{p,V}, \xi)$ is constant along $\gamma_{p,V}$ and thus identically zero, because $\xi$ vanishes on $N$. As $\dot\gamma_{p,V}\in \mathcal{V}$, it follows that  $\dot\gamma_{p,V}$ is proportional to $I\xi$, which is the metric dual of $-\theta_0$. On the other hand,  \eqref{der0theta} immediately gives $\dd|\theta_0|^2=2f|\theta_0|^2\theta_0$. Therefore, $\dd|\theta_0|^2$ vanishes on $\dd F_V(T_V SN)$, showing that the norm of $\xi_{\gamma(t)}$ does not depend on the starting point either. Hence, we further denote the function $c_p=c_{p,V}$ simply by $c\colon\mathbb{R}\to\mathbb{R}$.

Differentiating the relation $\gamma_{p,V}(t)=\gamma_{p,-V}(-t)$ which holds for all geodesics and for all $t$, yields $\dot\gamma_{p,V}(t)=-\dot \gamma_{p,-V}(-t)$. Therefore, from \eqref{fc} we conclude that $c(-t)=-c(t)$, for all $t$. Moreover, $c(t)$ is non-vanishing for $|t|\ne 0$ and sufficiently small. By replacing $I$ with $-I$ if necessary, we thus can assume that $c$ is negative on some interval $(0,\e)$ and positive on $(-\e,0)$. Since $(\varphi_0(\gamma(t)))'=\theta_0(\dot\gamma(t))=-c(t)$, we conclude that 
$N$ is a connected component of the level set of a local minimum of $\varphi_0$.

By compactness of $N$, the exponential map defined on the normal bundle of $N$ is surjective, so its image contains points where $\f_0$ attains its absolute maximum. At such a point, the vector field $\xi$ vanishes, so \eqref{fc} shows that $t_0:=\inf\{t>0\, \vert\, c(t)=0\}$ is well-defined and positive. Let $N'$ be a connected component of 
the inverse image through $\f_0$ of $\f_0(\exp_p(t_0V))$,  for some $p\in N$ and some unit vector $V$ in $ \mathcal{V}_{p}$. The above argument, applied to $N'$ instead of $N$, shows that $N'$ is a connected component of the level set of a local maximum of $\varphi_0$. It also shows that  $\exp_q(t_0W)\in N'$ for any $q\in N$ and any unit vector $W\in \mathcal{V}_q$. From \eqref{rot} it follows that $\dot \gamma_{p,-V}(t_0)=-\dot\gamma_{p,V}(t_0)$, for any $p\in N$ and any unit vector $V\in \mathcal{V}_p$. In other words, if a geodesic starting at a point $p$ of $N$ with unit speed vector $V\in\mathcal{V}_p$ arrives after time $t_0$ in a point $p'\in N'$ with speed vector $V'\in \mathcal{V}_{p'}$, then  the geodesic starting at $p$ with speed vector $-V$ arrives after time $t_0$ in $p'$ with speed vector $-V'$, showing that these two geodesics close up to one geodesic. Hence, $M$ equals the image through the exponential map of the compact subset of the normal bundle of $N$ consisting of vectors of norm $\leq t_0$, thus showing that $M\setminus M'=N\cup N'$. 

Consequently, the function $\f_0$ attains its minimum on $N$ and its maximum on $N'$ and has no other critical point. Let $S$ be some level set corresponding to a regular value of $\f_0$. Consider the unit vector field $\zeta:=\frac{I\xi}{|I\xi|}$ on $M'$ (see Figure~\ref{figure} for a visualization of the vector fields $\xi$ and $\zeta$ and of the level sets of $\varphi_0$).

\begin{figure}[h]
 \begin{tikzpicture}

    \draw[thick, color=black] (0,0) ellipse (3cm and 2cm);
    
    \draw[thick, color=black] (-3,0) arc (-180:0:3cm and 1.2cm);
    
    \draw[dashed,thick, color=black] (3,0) arc (0:180:3cm and 1.2cm);

    \draw[thick, color=blue] (0,-2) arc (270:90:0.8cm and 2cm);

  \draw[dashed,thick, color=blue] (0,-2) arc (-90:90:0.8cm and 2cm);

     \draw[->][thick, color=blue] (-4,-0.5) arc (250:120:0.4cm and 1cm);
     
    
   \draw[thick, color=black, domain=-4:-2] plot (\x,{3*\x+9.1});
    
    \draw[thick, color=black, domain=2:4] plot (\x,{3*\x-9.1});
    
    \draw[|-|][thick, color=black] (-4.7,-4.53)--(1.53,-4.5);
   

\draw[->] [thick] (-1,-2.5) -- (-1.5,-4); 

\draw[->] [thick,color=blue] (-0.8,0) -- (-0.8,2.3);

\draw[->] [thick,color=black] (-3,-0.1) -- (-2.3,-1.7); 

\draw[->] [thick,color=black] (0,-1.2) -- (2.2,-1.2);


\draw (-3, -1) node {$V$};    

\draw (-3.3, 0) node {$p$};

\draw (3.5, 0) node {$\gamma(t_0)$};   

\draw (0, -1.5) node {$\gamma(t)$};

\draw (1.5, -1.5) node {$\zeta$};

 \draw (-4.7, -5) node {$\min\varphi_0$};

 \draw (1.3, -5) node {$\max\varphi_0$};

\draw (-0.9, -3.25) node {$\varphi_0$};

\draw [color=blue](-1, 1.5) node {$\xi$};    

\draw [color=black](4.3, 2.5) node {$N'$}; 
\draw [color=black](-1.9, 2.5) node {$N$}; 
\draw [color=blue](-4.6, 0.3) node {$\Phi_s$}; 
\end{tikzpicture}
\caption{}\label{figure}
\end{figure}

From \eqref{der0Jxi} and \eqref{der0xi} we readily compute on $M'$:
\begin{equation}\label{derIxi}
\nabla^0_X\zeta=-\frac{f}{|\xi|}\<X,\xi>\xi,\qquad\forall X\in\T M',
\end{equation}
and
\begin{equation}\label{derzeta}
\nabla^0_\zeta I\zeta=0.
\end{equation}

In particular, we have $\nabla^0_\zeta\zeta=0$, so if $\Psi$ denotes the (local) flow of $\zeta$, the curve $t\mapsto\Psi_t(x)$ is a geodesic for every $x\in M'$, that is, $\Psi_t(x)=\exp_x(t\zeta)$. Note that by \eqref{derIxi}, we have $\dd\zeta^\flat=0$ so the Cartan formula implies $\cL_\zeta\zeta^\flat=\dd(\zeta\lrcorner\zeta^\flat)+\zeta\lrcorner\dd\zeta^\flat=0$, which can also be written as
\begin{equation}\label{zeta}(\cL_\zeta g_0)(\zeta,X)=0,\qquad\forall X\in\T M'.
\end{equation}

We claim that for fixed $t$, $\f_0(\Psi_t(x))$ does not depend on $x\in S$. To see this, let $X\in \T_xS$. By definition $\dd\f_0(X)=0$, whence $g_0(X,\zeta)=0$. We need to show that $\dd\f_0((\Psi_t)_*(X))=0$. This is equivalent to $0=g_0(\zeta,(\Psi_t)_*(X))=(\Psi_t^*g_0)(\zeta,X)$, which clearly holds at $t=0$. Moreover, from \eqref{zeta} we see that the derivative of the function $(\Psi_t^*g_0)(\zeta,X)$ vanishes:
$$\frac{\dd}{\dd t}((\Psi_t^*g_0)(\zeta,X))=(\Psi_t^*\cL_\zeta g_0)(\zeta,X)=(\cL_\zeta g_0)(\zeta,(\Psi_t)_*(X))=0.$$

This shows that for every $x\in S$, $\exp_x(t\zeta)$ belongs to the same level set of $\f_0$. Moreover, $\f_0(\exp_x(t\zeta))$ is decreasing in $t$ since its derivative equals $\dd\f_0(\zeta)=\theta_0(\zeta)=-|\xi|$. Take the smallest $t_1>0$ such that $\pi(x):=\exp_x(t_1\zeta)\in N$ for every $x\in S$.

\noindent {\bf Claim.} The map $\pi$ is a Riemannian submersion from $(S,g_0|_S)$ to $(N,g_0|_N)$ with totally geodesic 1-dimensional fibers tangent to $\xi$. 

\noindent {\it Proof of the Claim.}
First, the Killing vector field $\xi$ commutes with $\zeta$, so $(\Psi_t)_*\xi=\xi$ for all $t<t_1$. Making $t$ tend to $t_1$ implies $\pi_*(\xi_x)=\xi_{\pi(x)}=0$ for every $x\in S$, since $\pi(x)\in N$. Thus $\xi$ is tangent to the fibers of $\pi$.  From \eqref{der0xi} we get $\nabla_{I\zeta} I\zeta=f|\xi|\zeta$, so $I\zeta$ is a geodesic vector field on $S$. Since $I\zeta$ is proportional to $\xi$, it is also tangent to the fibres of $\pi$.

Take now any tangent vector $X\in\T_xS$ orthogonal to $I\zeta$ and denote by $X_t:=(\Psi_t)_*(X)$, which makes sense for all $t<t_1$. By construction we have $\pi_*(X)=\underset{t\to t_1}{\lim}X_t$. Since $0=[\zeta,X_t]=\nabla^0_\zeta X_t-\nabla^0_{X_t}\zeta$, we get by \eqref{derIxi} and \eqref{derzeta}:
$$\zeta(\<X_t,I\zeta>)=\<\nabla^0_\zeta X_t,I\zeta>+\< X_t,\nabla^0_\zeta I\zeta>=\<\nabla^0_{X_t}\zeta,I\zeta>=-f|\xi|\<X_t,I\zeta>.$$
The function $\<X_t,I\zeta>$ vanishes at $t=0$ and satisfies a first order linear ODE along the geodesic $\gamma(t):=\exp_x(t\zeta)$, so it vanishes identically. Thus, $X_t$ is orthogonal to $I\zeta$ for all $t<t_1$. Moreover, the vector field $X_t$ along $\gamma$ has constant norm:
\begin{equation}
\zeta(|X_t|^2)=2\<\nabla^0_\zeta X_t,X_t>=2\<\nabla^0_{X_t}\zeta ,X_t>\stackrel{\eqref{derIxi}}{=} -\frac{2f}{|\xi|}\<X_t,\xi>^2=-2f|\xi|\<X_t,I\zeta>^2=0.
\end{equation}
This shows that $|\pi_*(X)|^2=|X|^2$, thus proving the claim.

Let us now consider the smallest $t_2>0$ such that $\pi(x):=\exp_x(-t_2\zeta)\in N'$ for every $x\in S$ and let $b:=t_1+t_2$.
The flow of the geodesic vector field $\zeta$ defines a diffeomorphism between $(0,b)\times S$ and $M'$, which maps $(r,x)$ onto $\exp_x((r-t_2)\zeta)$. With respect to this diffeomorphism, the vector field $\zeta$ is identified to $\dr$, the metric reads $g_0=\dd r^2+k_r$, where $k_r$ is a family of Riemannian metrics on $S$, and the function $|\theta_0|$ only depends on $r$, say $|\theta_0|=\alpha(r)$. It follows that $\theta_0=\alpha\dd r$ and since $\dd\f_0=\theta_0$, we see that $\f_0=\f_0(r)$ and $\f'_0=\alpha.$

The previous claim actually shows that for every $r\in (0,b)$, 
$k_r=\pi^*(h)+\tau_r\otimes\tau_r,$
where $\tau_r:=I\zeta^\flat$ and $h:=g_0|_N$. From \eqref{derIxi} and \eqref{derzeta} we readily obtain 
\begin{equation*}\dot\tau_r=\cL_\zeta(I\zeta^\flat)=-f\alpha I\zeta^\flat=-f\alpha\tau_r.
\end{equation*} 
This shows that $\tau_r=\ell(r)\omega$ with $\ell(r):=e^{-\int_0^r f(t)\alpha(t)\dd t}$, where $\omega$ denotes the connection $1$-form on the $S^1$-bundle $S\to N$ induced by the Riemannian submersion $\pi$. Finally, the metric on $M'$ reads $g_0=\dd r^2+\pi^*(h)+\ell^2\omega\otimes\omega$, showing that $g_0$ has the form of the metric described in Proposition \ref{prop}.
\end{proof}

\subsection{Proof of Theorem \ref{p2}}
We can now finish the classification of compact manifolds carrying two conformally related non-homothetic K\"ahler metrics. Assume 
that $(g_+,J_+)$ and $(g_-,J_-)$ are K\"ahler structures on a compact manifold $M$ of real dimension $2n\ge4$ with $g_+=e^{2\f}g_-$ for some non-constant function $\f$. Note that $J_+$ is not conjugate to $ J_-$. Indeed, if $J_+$ were equal to $\pm J_-$, then $\Omega_+=\pm e^{2\varphi}\Omega_-$, so $0=\dd\Omega_+=\pm2e^{2\varphi}\dd\varphi\wedge\Omega_-$ would imply $\dd\varphi=0$, so $\varphi$ would be constant.

We introduce the following notation, in order to use the results from Section \ref{s3}: 
\[g:=g_+,\quad I:=J_+,\quad J:=J_-,\quad \Omega^I:=\Omega_+=g(J_+\cdot,\cdot),\quad \Omega^J:=g(J\cdot,\cdot)=e^{2\varphi}\Omega_-.\] 
Then $(M,g,I)$ is K\"ahler, and $(M,g,J)$ is lcK (in fact globally conformally K\"ahler), with Lee form $\theta:=\dd\varphi$. This last statement follows from \eqref{dOmega}, since $\dd\Omega^J=2e^{2\varphi}\dd\varphi\wedge\Omega_-=2\dd\varphi\wedge\Omega^J$.

The first part of  Theorem \ref{thm kaehler} shows that $I$ and $J$ commute, which proves the statment of Theorem~\ref{p2}  for $n=2$. For $n\geq 3$, the proof of the Theorem \ref{thm kaehler} shows moreover, that after replacing $I$ by $-I$ if necessary, one has $I\theta=J\theta$ and $\tr(IJ)=2n-4$. 

Let us now consider the 2-form  $\sigma:=\frac{1}{2}\Omega^I+\frac{1}{2}\Omega^J$, corresponding to the endomorphism $I+J$ of $\T M$ via the metric $g$. We denote again by $M'$ the open set where $\theta$ is non-vanishing.  By \eqref{J}, on $M'$ we have
\begin{equation}\label{sigma}
\sigma=\frac1{|\theta|^2}\theta\wedge I\theta.
\end{equation}
Since $I$ is $\nabla$-parallel (where $\nabla$ is the Levi-Civita connection of $g$), we obtain by \eqref{nablaJ} that  $\nabla_X\sigma=\frac{1}{2}(X\wedge J\theta+JX\wedge\theta)$. Substituting $\Omega^J=2\sigma-\Omega^I$, and using the fact that $\sigma(\theta)=I\theta$ we obtain  the following formula for the covariant derivative of $\sigma$:
\begin{eqnarray*}
\nabla_X\sigma&=&\frac12\nabla_X\Omega^J=\frac{1}{2}(X\wedge J\theta+JX\wedge \theta)\\
&=&\frac{1}{2}(X\wedge (2\sigma-I)\theta+(2\sigma-I)X\wedge\theta)\\
&=&\frac{1}{2}(X\wedge I\theta-IX\wedge\theta)+\sigma(X)\wedge\theta.
\end{eqnarray*}
Since \eqref{sigma} gives $\theta\wedge\sigma=0$, we get $0=X\lrcorner(\theta\wedge\sigma)=\<X, \theta>\sigma-\theta\wedge\sigma(X)$ for every $X\in\T M$. The previous computation thus yields
\begin{equation}\label{deromega}
\nabla_X\sigma=\frac{1}{2}(X\wedge I\theta-IX\wedge\theta)-\<X, \theta>\sigma, \qquad\forall X\in \T M. 
\end{equation}
We consider now the $2$-form
$\widetilde\sigma:=e^{\varphi}\sigma$. By \eqref{deromega}, its covariant differential reads:
\[\nabla_X \widetilde\sigma=\frac{e^\varphi}{2}(X\wedge I\theta-IX\wedge\theta), \quad\forall X\in \T M.\]
Equivalently, this equation can be written as
\begin{equation}\label{tilom}
\nabla_X\widetilde\sigma=\frac{1}{2}(\dd (\tr \,\widetilde\sigma)\wedge IX-\dd^c (\tr\, \widetilde\sigma)\wedge X), \quad\forall X\in \T M,
\end{equation}
where $\tr\, \widetilde\sigma:=\<\widetilde\sigma, \Omega^I>=e^\varphi$ is the trace with respect to the K\"ahler form $\Omega^I$ and $\dd^c$ denotes the twisted exterior differential defined by $\dd^c \alpha:=\sum_iI e_i\wedge \nabla_{e_i}\alpha$, for any form $\alpha$. 

A real $(1,1)$-form on a K\"ahler manifold $(M,g,I, \Omega^I)$ satisfying \eqref{tilom} is called 
a {\it Hamiltonian $2$-form} (see \cite{ACG2006}). Compact K\"ahler manifolds carrying such forms are completely described in \cite[Theorem 5]{ACGT2004}.
In the case where the Hamiltonian form has rank 2, these are exactly the manifolds obtained from the Calabi Ansatz described in Proposition \ref{prop}. 

However, the statement and the proof of \cite[Theorem 5]{ACGT2004} are rather involved, and it is not completely clear that the construction described in the conclusion of \cite[Theorem 5]{ACGT2004} is equivalent to the Calabi Ansatz. We will thus provide here a more direct proof. 

All we need is to show that the globally conformally K\"ahler structure on $M$ determined by $g_0:=e^{\f}g_-=e^{-\f}g_+$ and $I:=J_+$ satisfies the hypotheses of Proposition \ref{thmconv}. We start with the following:

\begin{Lemma}\label{lem nablaTheta}
On the open set $M'$ where $\theta$ is not vanishing, 
the covariant derivative of $\theta$ with respect to $g$ is given by
\begin{equation}\label{et}
\nabla_X\theta=
\frac{1}{2}|\theta|^2 X-\frac{1}{2}\left(\frac{\delta\theta}{|\theta|^2}+n+1\right)\<X,\theta>\theta-
\frac{1}{2}\left(\frac{\delta\theta}{|\theta|^2}+n-1\right)\<X,I\theta>I\theta.
\end{equation}
\end{Lemma}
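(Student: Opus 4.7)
The plan is to exploit the explicit expression of $J$ in terms of $I$ and $\theta$ obtained in the proof of Theorem~\ref{thm kaehler}, namely
\[
JX = -IX + \frac{2}{|\theta|^2}\bigl(\langle X,\theta\rangle I\theta - \langle X,I\theta\rangle\theta\bigr)\qquad\text{on }M',
\]
together with the K\"ahler condition $\nabla I = 0$. Differentiating this identity and using $\nabla I = 0$ turns the lcK formula \eqref{nablaJ} into a tensor equation involving only $\nabla\theta$, $I$ and $\theta$. Setting $X=I\theta$ (and using $J\theta = I\theta$) one finds after a short computation that $\nabla_Y\theta$ always lies in $\mathrm{span}(Y,\theta,I\theta)$, so there exist $1$-forms $\lambda,\mu$ with
\[
\nabla_Y\theta = \tfrac{|\theta|^2}{2}Y + \lambda(Y)\theta + \mu(Y)I\theta.
\]
Since $\theta$ is closed, $\nabla\theta$ is symmetric; feeding the above expression into the symmetry condition forces $\lambda$ and $\mu$ to be supported on $\mathrm{span}(\theta,I\theta)$ with a common off-diagonal coefficient. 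Hence one can write $\lambda = A\theta + B I\theta$ and $\mu = B\theta + C I\theta$ for three scalar functions $A,B,C$ on $M'$, and it remains to determine them.

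The next step is to pin down $A,B,C$ by exploiting $\dd^2 = 0$. A direct computation from the ansatz gives
\[
\dd|\theta|^2 = |\theta|^2\bigl((1+2A)\theta + 2B\, I\theta\bigr),\qquad \dd(I\theta) = |\theta|^2\,\Omega^I + (A+C)\,\theta\wedge I\theta,
\]
the second identity using $\nabla I = 0$. Applying $\dd$ to the first relation, and using $\dd\theta = 0$, the formula for $\dd(I\theta)$ and $\dd\Omega^I = 0$, one obtains a $2$-form equation; evaluating it on a pair $(X,IX)$ with $X\perp\theta,I\theta$ makes every term vanish except $2B|\theta|^4|X|^2$, forcing $B=0$ (such $X$ exists since $n\ge 2$). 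Applying $\dd$ to the second relation with $B=0$, one obtains a $3$-form equation; evaluating on $(\theta, X, IX)$ with $X\perp\theta,I\theta$ kills everything except $|\theta|^4(1+A-C)|X|^2$, so $A - C = -1$. Finally, tracing the ansatz over an orthonormal frame gives $\sum_i\langle\nabla_{e_i}\theta, e_i\rangle = (n+A+C)|\theta|^2$, whence $A+C = -\delta\theta/|\theta|^2 - n$. Combined with $A-C = -1$, this yields the values of $A$ and $C$ appearing in \eqref{et}.

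The main obstacle is that \eqref{nablaJ} together with the symmetry of $\nabla\theta$ is by itself insufficient to determine the triple $(A,B,C)$: the two span directions $\theta$ and $I\theta$ absorb any freedom one tries to extract, and both sides of \eqref{nablaJ} remain consistent for every choice. The extra rigidity only becomes visible after differentiating auxiliary objects and reading off components transverse to $\mathrm{span}(\theta,I\theta)$; this is where the K\"ahler form $\Omega^I$ enters non-trivially, and where the assumption $n\ge 2$ is used to guarantee the existence of a nonzero vector $X$ with $IX\in\mathrm{span}(\theta,I\theta)^\perp$.
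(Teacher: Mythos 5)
Your proof is correct, and it reaches \eqref{et} by a route that coincides with the paper's for most of the way but diverges at the decisive last step. Like the paper, you first use the pointwise formula \eqref{J} for $J$ (equivalently, the paper differentiates $\sigma=\frac{1}{|\theta|^2}\theta\wedge I\theta$ and compares with \eqref{deromega}) to get the ansatz $\nabla_Y\theta=\frac{|\theta|^2}{2}Y+\lambda(Y)\theta+\mu(Y)I\theta$, then use symmetry of $\nabla\theta$ to reduce to three scalars $A,B,C$; your elimination of $B$ via $\dd^2|\theta|^2=0$ is the same mechanism as the paper's use of $\dd\alpha=0$ combined with $\dd(I\theta)=|\theta|^2\Omega^I+(A+C)\theta\wedge I\theta$ (the two are equivalent since $\alpha=\frac12(\dd\ln|\theta|^2-\theta)$), and the trace relation $A+C=-\delta\theta/|\theta|^2-n$ is identical to \eqref{eqn ac1}. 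The genuine difference is the second scalar relation: the paper obtains it by substituting $Y=\theta$ into the curvature identity \eqref{contr12}, which is itself extracted from $R_{X,Y}=R_{IX,IY}$ through the contractions \eqref{contr1}--\eqref{contr2}, yielding \eqref{eqn ac2} and hence $(2-n)a+nc=-1-\delta\theta/|\theta|^2$; you instead apply $\dd$ once more to $\dd(I\theta)=|\theta|^2\Omega^I+(A+C)\theta\wedge I\theta$, use $\dd\Omega^I=0$ and $\dd|\theta|^2=|\theta|^2(1+2A)\theta$ (after $B=0$), and evaluate the resulting $3$-form on $(\theta,X,IX)$ with $X\perp\theta,I\theta$ to get $1+A-C=0$ directly — I checked that the only surviving term is indeed $|\theta|^4(1+A-C)|X|^2$, and that $A-C=-1$ together with the trace relation reproduces the coefficients of \eqref{et}. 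Your version buys a purely first-order, integrability-based determination of the coefficients that bypasses the curvature computation entirely for this lemma; the paper's version has the curvature identities already at hand from the proof of Theorem \ref{thm kaehler}, so neither is objectively cheaper in context, but yours is more self-contained. One small remark: the inputs you take for granted (commutation of $I$ and $J$, $I\theta=J\theta$, and formula \eqref{J}) are only established for $n\ge3$, so the relevant dimensional hypothesis is $n\ge3$ rather than the $n\ge2$ you cite for the existence of $X\perp\theta,I\theta$; this does not affect the argument, since the lemma is only invoked in that range.
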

\begin{proof}
Using the fact that $I$ and $J$ commute, $I\theta=J\theta$ and $\tr(IJ)=2n-4$, \eqref{contr2} simplifies to
\begin{equation}\label{nablath}
\sum_{i=1}^{2n}\<IJ\nabla_{e_i}\theta,e_i>=2(n-1)|\theta|^2+\delta\theta.
\end{equation}
Substituting this into \eqref{contr1},
we obtain
\begin{multline}\label{contr12}
2(2-n)\<Y,\theta> J\theta  
-2n\<Y,J\theta>\theta 
+(2n-5)|\theta|^2 JY+|\theta|^2IY+\delta\theta(JY+ IY)\\
+2(2-n) J\nabla_{Y}\theta +2\nabla_{JY} \theta+2(n-2)\nabla_{IY}\theta-2IJ \nabla_{IY}\theta=0.
\end{multline}
Differentiating \eqref{sigma} on $M'$ yields
\begin{equation}
\nabla_X \sigma=-\frac{2\<\nabla_X
\theta,\theta>}{|\theta|^4}\theta\wedge I\theta+\frac{1}{|\theta|^2}(\nabla_X\theta\wedge I\theta+
\theta\wedge I\nabla_X\theta).
\end{equation}
Comparing with \eqref{deromega}, we obtain
\begin{equation}
\frac{1}{2}(X\wedge I\theta-IX\wedge\theta)-\<X, \theta>\sigma=-\frac{2\<\nabla_X
\theta,\theta>}{|\theta|^4}\theta\wedge I\theta+\frac{1}{|\theta|^2}(\nabla_X\theta\wedge I\theta+
\theta\wedge I\nabla_X\theta).
\end{equation}
Taking the interior product with $I\theta$ in the last equality, we get
\begin{equation}
\frac{1}{2}\<X,I\theta>I\theta-
\frac{1}{2}|\theta|^2X +\frac{1}{2}\<X,\theta>\theta=\frac{\<\nabla_X
\theta,\theta>}{|\theta|^2}\theta+\frac{1}{|\theta|^2}\<\nabla_X\theta,I\theta>
I\theta-\nabla_X\theta.
\end{equation}
We deduce that the following equality holds: 
\begin{equation}\label{eqn nabla}
\nabla_X\theta=\frac{1}{2}|\theta|^2X+\alpha(X)\theta+\beta(X)
I\theta,
\end{equation}
where $\alpha$ and $\beta$ are the following $1$-forms:
\begin{equation}\label{ab}
\alpha=\frac{1}{2}\left(\frac{\dd(|\theta|^2)}{|\theta|^2}-\theta\right),\quad \beta=\frac{1}{|\theta|^2}\nabla_{I\theta}\theta-\frac{1}{2}I\theta.
\end{equation}
Since $\theta$ is closed, \eqref{eqn nabla} yields
$0=\alpha\wedge\theta+
\beta\wedge I\theta.$
Therefore, there exist $a,b,c\in C^\infty(M')$, such that 
$$\alpha=a\theta+b I\theta\quad\textrm{ and }\quad\beta= b\theta+c I\theta.$$ 
Moreover, \eqref{ab} shows that $\alpha$ is closed, so $\dd a\wedge\theta+\dd b\wedge I\theta+b\dd(I\theta)=0$.  On the other hand, by \eqref{eqn nabla}, we have $\dd(I\theta)=|\theta|^2\Omega^I+\alpha\wedge I\theta-\beta\wedge \theta=|\theta|^2\Omega^I+(a+c)\theta\wedge I\theta$. Hence,
\begin{equation*}
\dd a\wedge\theta+\dd b\wedge I\theta+b|\theta|^2\Omega^I+b(a+c)\theta\wedge I\theta=0.
\end{equation*} 
Applying the last equality to
$X$ and $IX$, for a non-zero vector field $X$ orthogonal to $\theta$ and $I\theta$ yields $b=0$. By \eqref{eqn nabla} again we have
\begin{equation}\label{eqn ac1}
-\delta\theta=\sum_{i=1}^{2n}\<e_i,\nabla_{e_i}\theta>=(n+a+c)|\theta|^2.
\end{equation}
Substituting $Y$ by $\theta$ in \eqref{contr12} and using \eqref{eqn nabla}, we obtain
\begin{equation}\label{eqn ac2}
\left(\delta\theta+(1+(2-n)a+nc)|\theta|^2 \right)I\theta=0.
\end{equation}
From \eqref{eqn ac1} and \eqref{eqn ac2}, it follows that 
$$a=-\frac{1}{2}\left(\frac{\delta\theta}{|\theta|^2}+n+1\right)\quad \text{and} \quad c=-\frac{1}{2}\left(\frac{\delta\theta}{|\theta|^2}+n-1\right).$$
This proves the lemma. 
\end{proof}

We write \eqref{et} as
\begin{equation}\label{et1}
\nabla_X\theta=
\frac{1}{2}g(\theta,\theta) X^\flat-\frac12(f+2)\theta(X)\theta-
\frac12fI\theta(X)I\theta,
\end{equation}
where $f:=\left(\frac{\delta\theta}{|\theta|^2}+n-1\right)$. Note that we no longer identify vectors and 1-forms in this relation, since we will now perform a conformal change of the metric.

Namely, we consider the ``average metric" $g_0:=e^{\f}g_-=e^{-\f}g_+$ and denote by $\nabla^0$ its Levi-Civita covariant derivative, by $\theta_0$ the Lee form of $I:=J_+$ with respect to $g_0$ and by $\Omega_0:=g_0(I\cdot,\cdot)$. Since $\dd \Omega_0=\dd(e^{-\f}\Omega_+)=-e^{-\f}\dd \f\wedge\Omega_+=-\dd \f\wedge\Omega_0$, we get $\theta_0=-\frac12\dd \f=-\frac12\theta$.

From \eqref{et1} we immediately get
\begin{equation}\label{et2}
\nabla_X\theta_0=-
g(\theta_0,\theta_0) X^\flat+(f+2)\theta_0(X)\theta_0+
fI\theta_0(X)I\theta_0.
\end{equation}

The classical formula relating the covariant derivatives of $g$ and $g_0=e^{-\f}g$ on 1-forms reads
$$\nabla^0_X\eta=\nabla_X\eta+g(\theta_0,\eta)X^\flat-\eta(X)\theta_0-\theta_0(X)\eta,\qquad\forall X\in \T M,\ \forall \eta\in\Omega^1(M),$$
where $\flat$ is the index lowering with respect to $g$. For $\eta=\theta_0$, \eqref{et2} becomes exactly \eqref{der0theta}.

From the proof of Theorem~\ref{thm kaehler} it is clear that the distribution $\cV:=\ker(I-J)$ is spanned along $M'$ by $\xi$ and $I\xi$, where
$\xi$ denotes the vector field on $M$ corresponding to $I\theta_0$ via the metric $g_0$. This shows that the hypotheses of Proposition~\ref{thmconv} are verified, thus concluding the proof of Theorem \ref{p2}.

\subsection{ Proof of Theorem~\ref{p3}}

Using the above results, we can now complete the classification of compact proper lcK manifolds 
$(M^{2n},g,J,\theta)$ with non-generic holonomy, by reviewing the possible cases in the Berger-Simons holonomy theorem.

First, by Proposition~\ref{prop sym}, there exist no compact irreducible locally symmetric proper lcK manifolds.

In Section~\ref{sec hol}, we showed that if the restricted holonomy of $(M,g)$ is in the Berger list, then necessarily $\mathrm{Hol}_0(M,g)=\mathrm{U}(n)$. 
After passing to a double covering if neccesary, there exists a complex structure $I$, such that $(M,g,I)$ is K\"ahler. By Theorem \ref{thm kaehler}, $I$ and $J$ commute and $(M,g,J,\theta)$ is gcK. 
The conformal class of $g$ ths contains two non-homothetic K\"ahler metrics. We conclude then by Theorem~\ref{p2} that 
$(M,g,J,\theta)$ falls in one of the cases \ref{item 2}a) or \ref{item 2}b) in Theorem~\ref{p3}.

Finally, if $\mathrm{Hol}_0(M,g)$ is reducible, then Theorem~\ref{thmred} shows that $\mathrm{Hol}_0(M,g)$ is (up to conjugation) a subgroup of $\SO(2n-1)$ acting irreducibly on $\RM^{2n-1}$. Theorem~\ref{main am} implies that $(M^{2n},g,J,\theta)$ satisfies either case \ref{item 1} or case \ref{item 2}c) in Theorem~\ref{p3}. Moreover, Corollary \ref{cor-hol} shows that the restricted holonomy group $\mathrm{Hol}_0(M,g)$ is conjugate to $\SO(2n-1)$ in both cases.

\bibliography{bibliographie}

\end{document}